\numberwithin{equation}{section}
\begin{document}

\begin{frontmatter}

\title{Explicit partitioning strategies for the interaction between a fluid and a multilayered poroelastic structure: an operator-splitting approach}

 \author[MA]{M. Buka\v{c}\corref{cor1}}
 \ead{martinab@pitt.edu}
 \author[ME]{P. Zunino}
 \ead{paz13@pitt.edu}
 \author[MA] {I. Yotov}
 \ead{yotov@math.pitt.edu}
  \cortext[cor1]{Corresponding author (+1 412 624 0177)}

 \address[MA]{Department of Mathematics, University of Pittsburgh, Pittsburgh, PA 15260,  USA}
 \address[ME]{Department of Mechanical Engineering \& Materials Science, University of Pittsburgh, Pittsburgh, PA 15261,  USA}

\begin{abstract}
We study the interaction between an incompressible, viscous, Newtonian fluid and a multilayered structure, which consists of a thin elastic layer and a thick poroelastic material.
The thin layer is modeled using the linearly elastic Koiter membrane model, while the thick poroelastic layer is modeled as a Biot system. The objective of this work is to investigate how 
the poroelastic phenomena affect the characteristic features of blood flow in arteries, such as propagation of pressure waves. We develop a loosely coupled fluid-structure interaction finite element solver 
based on the Lie operator splitting scheme. We prove a conditional stability of the scheme and derive error estimates. Theoretical results are supported with numerical examples.
\end{abstract}

\begin{keyword}
Fluid-structure interaction \sep
multilayered structure \sep
poroelasticity \sep
operator-splitting scheme 
\end{keyword}

\end{frontmatter}

\def\nn{\boldsymbol{n}}
\def\hnn{\hat{\boldsymbol{n}}}
\def\ttau{\boldsymbol{t}}
\def\httau{\hat{\boldsymbol{t}}}
\def\hGamma{\hat{\Gamma}}
\def\bilinAA{{\mathcal A}}
\def\dt{\partial_\tau}
\def\dtt{d_\tau}
\def\interf{\Gamma^n}

\def\domf{\Omega_f}
\def\hdomf{\hat{\Omega}_f}
\def\velf{\boldsymbol{v}_h}
\def\hvelf{\hat{\boldsymbol{v}}_h}
\def\bff{\boldsymbol{\varphi}_{f,h}}
\def\hbff{\hat{\boldsymbol{\varphi}}_{f,h}}
\def\strf{\boldsymbol{\sigma}_{f,h}}
\def\hstrf{\hat{\boldsymbol{\sigma}}_{f,h}}
\def\strfv{\boldsymbol{D}_{f,h}}
\def\strfp{\pf}
\def\pf{p_{f,h}}
\def\hpf{\hat{p}_{f,h}}
\def\bpf{\psi_f}
\def\hbpf{\hat{\psi}_{f,h}}

\def\domp{\Omega_p}
\def\hdomp{\hat{\Omega}_s}
\def\velp{\boldsymbol{q}_h}
\def\hvelp{\hat{\boldsymbol{q}}_h}
\def\bfp{{\boldsymbol{r}}_h}
\def\hbfp{\hat{\boldsymbol{r}}}
\def\pp{p_p}
\def\hpp{\hat{p}_{p,h}}
\def\bpp{\psi_{p,h}}
\def\hbpp{\hat{\psi}_{p,h}}
\def\velr{\boldsymbol{w}_h}
\def\hvelr{\hat{\boldsymbol{w}}_h}

\def\strp{\boldsymbol{\sigma}_{s,h}}
\def\hstrp{\hat{\boldsymbol{\sigma}}_{s,h}}
\def\dispp{\boldsymbol{U}_h}
\def\hdispp{\hat{\boldsymbol{U}}_h}
\def\bsp{\boldsymbol{\varphi}_{s,h}}
\def\hbsp{\hat{\boldsymbol{\varphi}}_{s,h}}
\def\strep{\boldsymbol{\sigma}_{E,h}}
\def\hstrep{\hat{\boldsymbol{\sigma}}_{E,h}}

\def\dispm{\boldsymbol{\eta}_h}
\def\hdispm{\hat{\boldsymbol{\eta}}_h}
\def\bm{\boldsymbol{\xi}_h}
\def\hbm{\hat{\boldsymbol{\xi}}_h}

\def\femvf{\mathbf{V}_h^f}
\def\femvp{\mathbf{V}_h^p}
\def\fempf{Q_h^f}
\def\fempp{Q_h^p}
\def\hfems{\hat{\mathbf{X}}_h^s}
\def\hfemm{\hat{\mathbf{X}}_h^m}

\def\bilinasp{\hat{a}_{s}}
\def\bilinbsp{\hat{b}_{s}}
\def\bilinbspnoh{b_{s}}
\def\bilinm{\hat{a}_m}
\def\bilinaff{{a}_{f}^n}
\def\bilinbff{{b}_{f}^n}
\def\bilinafp{{a}_{p}^n}
\def\bilinbfp{{b}_{p}^n}
\def\bilincfp{{c}_{p}^n}
\def\stabff{{s}_{f}^n}

\def\energyf{E_{f,h}}
\def\energys{\hat{E}_{s,h}}
\def\energym{\hat{E}_{m,h}}

\newtheorem{theorem}{Theorem}
\newtheorem{lemma}{Lemma}
\newtheorem{corollary}{Corollary}
\newtheorem{assumption}{Assumption}

\section{Introduction}
We study the interaction between an incompressible viscous, Newtonian fluid and a multilayered poroelastic structure.
The need of a multilayered description of the wall is suggested by results of \textit{in vivo} measurements of 
arterial wall motion~\cite{cinthio2006longitudinal,cinthio2005evaluation}, which indicate that the inner parts of the vessel wall (intima-media complex) exhibit a larger longitudinal displacement
than the outer part of the vessel wall (adventitia), introducing the presence of substantial shear strain and shear stress within the wall. 
Keeping in mind the general and ambitious aim of improving the understanding of arterial mechanical properties, we also introduce poroelastic effects in FSI simulations. 
The material properties of arteries have been widely studied~\cite{armentano1995arterial,bauer1979separate,fung1972bio,humphrey1995mechanics,vito2003blood,robertson2012structurally,canic2006modeling}. Pseudo-elastic~\cite{zhou1997degree,fung1972bio}, viscoelastic~\cite{armentano1995arterial,canic2006modeling,bauer1979separate} and nonlinear material models represent well known examples. To our knowledge, only a few of them have been deeply analyzed in the time dependent domain, namely when coupled with the pulsation induced by heartbeat. These considerations also apply to poroelasticity, which is addressed here. 
Poroelasticity becomes particularly interesting when looking at the coupling of flow with mass transport. This is a significant potential application of our model, since mass transport provides nourishment, remove wastes, affects pathologies and allows to deliver drugs to arteries~\cite{prosi2005mathematical}.
Poroelastic phenomena are interesting in different applications where soft biological tissues are involved. We mention for example cerebro-spinal flow~\cite{lesinigo2013lumped}, which also involves FSI, the study of hysteresis effects observed in the myocardial tissue~\cite{holzapfel2004anisotropic,holzapfel2009constitutive}, as well as the modeling of lungs as a continuum material~\cite{rausch2011material}. 
Besides biological applications, this model can also be used in numerous other applications: geomechanics, ground-surface water flow, reservoir compaction and surface subsidence, seabed-wave interaction problem, etc.

While there exist many complex and detailed models for mutilayered structures in different applications, the interaction between the fluid and a multilayered structure remains an area of active research. To our knowledge, the only theoretical result was presented in~\cite{BorSunMulti}, where the authors proved existence of a solution to a fluid-two-layered-structure interaction problem, in which one layer is modeled as a thin (visco)elastic shell and the other layer as a linearly elastic structure.
Several studies focused on numerical simulations. An interaction between the fluid and a two-layer anisotropic elastic structure was used in~\cite{tang2011multi} to model the human right and left ventricles.
Slightly different models were used in~\cite{kim2002fluid} to model  fully coupled fluid-structure-soil interaction for cylindrical liquid-contained structures subjected to horizontal ground excitation.
The work in~\cite{botkin2007dispersion} focused on studying velocity of acoustic waves excited in multilayered structures contacting with fluids. 
A fluid-multilayered structure interaction problem coupled with transport was studied in~\cite{chung2012effect}, with the purpose of investigating 
 low-density lipoprotein transport within a multilayered arterial wall.
However, none of these studies present a numerical scheme supported with numerical analysis.

In this work, we propose a model that captures interaction between a fluid and a multilayered structure, which consists of a thin elastic layer and a thick poroelastic layer. We assume that the thin layer represents a homogenized combination of the endothelium, tunica intima, and internal elastic lamina, and that the thick layer represents tunica media.
The thin elastic layer is modeled using the linearly elastic Koiter membrane model, while the poroelastic medium is modeled using the Biot equations. 
The Biot system consists of an elastic skeleton and connecting pores filled with fluid. We assume that the elastic skeleton is homogeneous and isotropic, while the fluid in the pores is modeled using the Darcy equations.
The Biot system is coupled to the fluid and the elastic membrane via the kinematic (no-slip and conservation of mass) and dynamic (conservation of momentum) boundary conditions. 
More precisely, we assume that the elastic membrane can not store fluid, but allows the flow through it in the normal direction. In the tangential direction, we prescribe the no-slip boundary condition. 
This assumption is reasonable in blood flow modeling, since it has been shown in~\cite{lee2008permeability} that predominant direction of intimal transport is the radial direction normal to the endothelial surface, for all ranges of relative
intimal thickness.

The coupling between a fluid and a single layer poroelastic structure has been previously studied in~\cite{badia2009coupling,showalter2010poroelastic,murad2001micromechanical,tully2009coupling}. In particular, the work in~\cite{badia2009coupling} is based on the modeling and a numerical solution of the interaction between an incompressible,
Newtonian fluid, described using the Navier Stokes equations, and a poroelastic structure modeled as a Biot system. The problem was solved using both a monolithic and a partitioned approach. 
The partitioned approach was based on the domain decomposition procedure, with the purpose of solving the Navier-Stokes equations separately from the Biot system. However, sub-iterations were needed between the two problems due to the instabilities associated with the ``added mass effect''. Namely, in fluid-structure interaction problems, the ``classical'' loosely-coupled 
methods have been shown to be unconditionally unstable if the density of the structure is comparable to the density of the fluid~\cite{causin2005added}, which is the case in hemodynamics applications. To resolve this problem, as an alternative to sub-iterations, several different splitting strategies have been proposed~\cite{bukavc2012fluid,badia2009robin,fernandez2011incremental,fernandez2013fully,nobile2008effective,hansbo2005nitsche}. 
In particular, the kinematically coupled $\beta$-scheme proposed in~\cite{bukavc2012fluid} is based on embedding the no-slip kinematic condition into the thin structure equations. Using
the Lie operator splitting approach~\cite{glowinski2003finite}, the structure equations is split so that the structure inertia
is treated together with the fluid as a Robin boundary condition, while the structure elastodynamics is treated separately.
This method has been shown to be unconditionally stable, and therefore independent of the fluid and structure densities~\cite{SunBorMar}.

Motivated by the kinematically coupled $\beta$-scheme, in this manuscript we propose a loosely-coupled finite element scheme based on the Lie operator splitting method. We use the operator splitting to separate the fluid problem (Navier-Stokes equations) from the Biot problem. 
The no-slip kinematic condition in the tangential direction is embedded into the membrane equations. Operator splitting is preformed so that the tangential component of the structure inertia is treated together with the fluid as a Robin boundary condition. 
Assuming the primal formulation for the Darcy equations, the continuity of the normal flux and the balance of normal components of stress between the Navier Stokes fluid and the fluid in the pores is treated in a similar way as in the partitioned algorithms for the Stokes-Darcy coupled problems~\cite{layton2012long,shan2012decoupling}.
The membrane elastodynamics is embedded into the Biot system as a Robin boundary condition. 
In the contrast with domain decomposition methods proposed in~\cite{badia2009coupling}, the operator splitting approach does not require sub-iterations between the fluid and the Biot problem, making our scheme more computationally efficient. 

We prove a conditional stability of the proposed scheme, where the stability condition does not depend on the fluid and structure densities, but it is related to the decoupling of the Stokes-Darcy interaction problem. Furthermore, we derive the error estimates and prove the convergence of the scheme. 
The rates of convergence and the stability condition were validated numerically on a classical benchmark problem typically used to test the results of fluid-structure interaction algorithms. In the second numerical example, we investigate the effects of porosity to the structure displacement. Namely, we distinguish a high storativity and
a high permeability case in the Darcy equations, and compare them to the results obtained using a purely elastic model. Depending on the regime, we observe a significantly different behavior of the coupled system.

The rest of the paper is organized as follows. In the following section we introduce the model equations and the coupling conditions. In Section 3 we propose a loosely-coupled scheme based on the operator-splitting approach. The weak formulation and stability of the scheme is presented in Section 4.
In Section 5 we derive the error analysis of the scheme. Finally, the numerical results are presented in Section 6.

\section{Description of the problem}

Consider a bounded, deformable, two-dimensional domain $\Omega(t) = \Omega^f(t) \cup \Omega^p(t)$ of reference length $L$, which consists of two regions, $\Omega^f(t)$ and $\Omega^p(t)$, see Figure~\ref{fig:domain}. 
We assume that the region $\Omega^f(t)$ has reference width $2R$, and is filled by an incompressible, viscous fluid.
We denote the width of the second region $\Omega^p(t)$ by $r_p$, and assume that $\Omega^p(t)$ is occupied by a fully-saturated poroelastic matrix.
\begin{figure}[ht]
\centering{
\includegraphics[scale=0.6]{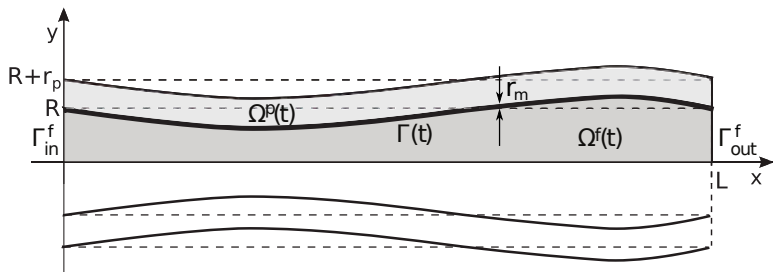}
}
\caption{Deformed domains $\Omega^f(t) \cup \Omega^p(t).$}
\label{fig:domain}
\end{figure}
The two regions are separated by a common interface $\Gamma(t)$. We assume that $\Gamma(t)$ has a mass, and represent a thin, elastic structure. 
Namely, we assume that the thickness of the  interface $r_m$ is ``small'' with respect to the radius of the fluid domain, $r_m << R$. Thus, the volume of the interface is negligible, so it acts as a membrane that can not store fluid, but allows the flow through it in the normal direction. 

We are interested in simulating a pressure-driven flow through the deformable channel with a two-way coupling 
between the fluid, thin elastic interface, and poroleastic structure.
Without loss of generality, we restrict the model to a two-dimensional (2D) geometrical model representing a deformable channel. We consider only the upper half of the fluid domain supplemented by a symmetry condition at the axis of symmetry.
Thus, the reference fluid and structure domains in our problem (showed by dashed lines in Figure~\ref{fig:domain}) are given, respectively, by
\begin{eqnarray*}
\hat{\Omega}^f &:=& \{(x,y) | \ 0<x<L, 0<y<R \}, \\
\hat{\Omega}^p &:=& \{(x,y) | \ 0<x<L, R<y<R+r_p \},
\end{eqnarray*}
and the reference lateral boundary by $\hat{\Gamma} = \{(x,R) | \ 0<x<L \}.$
The inlet and outlet fluid boundaries are defined, respectively, as $\Gamma_{in}^f = \{(0,y) | \ 0<y<R \}$ and
$\Gamma_{out}^f = \{(L,y) | \ 0<y<R \}.$

We model the flow using  the Navier-Stokes equations for a viscous, incompressible, Newtonian fluid:
\begin{align}\label{NS1}
& \rho_f \bigg( \frac{\partial \boldsymbol{v}}{\partial t}+ \boldsymbol{v} \cdot \nabla \boldsymbol{v} \bigg) = \nabla \cdot \boldsymbol\sigma^f + \boldsymbol g & \textrm{in}\; \Omega^f(t) \times (0,T), \\
 \label{NS2}
&\nabla \cdot \boldsymbol{v} = 0   & \textrm{in}\; \Omega^f(t) \times (0,T),
\end{align}
where $\boldsymbol{v}=(v_x,v_y)$ is the fluid velocity, $\boldsymbol\sigma_f= -p_f \boldsymbol{I} + 2 \mu_f \boldsymbol{D}(\boldsymbol{v})$  is the fluid stress tensor, $p_f$ is the fluid pressure, $\rho_f$ is the fluid density, 
$\mu_f$ is the fluid viscosity and  $\boldsymbol{D}(\boldsymbol{v}) = (\nabla \boldsymbol{v}+(\nabla \boldsymbol{v})^{\tau})/2$ is the rate-of-strain tensor.
Denote the inlet and outlet fluid boundaries by $\Gamma^f_{in}$ and $\Gamma^f_{out},$ respectively. At the inlet and outlet boundary we prescribe the normal stress:
\begin{align}
& \boldsymbol {\sigma}^f \boldsymbol {n}_{in} = -p_{in}(t) \boldsymbol{n}_{in} & \textrm{on} \; \Gamma^f_{in} \times (0,T), \label{inlet} \\
& \boldsymbol {\sigma}^f \boldsymbol {n}_{out} = 0  & \textrm{on} \; \Gamma^f_{out} \times (0,T),\label{outlet}
\end{align}
where $\boldsymbol{n}_{in}/\boldsymbol{n}_{out}$ are the outward normals to the inlet/outlet fluid boundaries, respectively.
These boundary conditions are common in blood flow modeling~\cite{badia2008fluid, miller2005computational, nobile2001numerical} even though they are not physiologically optimal since the flow distribution and pressure field in the modeled domain are
often unknown~\cite{Vignon-Clementel20063776}.
Along the middle line of the channel $\Gamma^f_0 = \{(x,0)| \; 0 < x < L\}$ we impose the  symmetry conditions:
\begin{equation}\label{symmetry_condition}
 \frac{\partial v_x}{\partial y} = 0, \quad v_y = 0 \quad \textrm{on} \; \Gamma^f_0 \times (0,T).
\end{equation}


The lateral boundary represents a deformable, thin elastic wall, whose dynamics is modeled by the linearly elastic Koiter membrane model, given in the Lagrangian formulation by:
\begin{align}
& \rho_{m} r_m \frac{\partial^2 \hat\eta_x}{\partial t^2}-C_2 \frac{\partial \hat\eta_y}{\partial \hat{x}}-C_1 \frac{\partial^2 \hat\eta_x}{\partial\hat {x}^2}= \hat {f}_x   & \textrm{on} \; \hat{\Gamma} \times (0,T), \label{structure1}  \\
& \rho_{m} r_m \frac{\partial^2 \hat\eta_y}{\partial t^2}+C_0 \hat\eta_y +C_2 \frac{\partial \hat\eta_x}{\partial \hat{x}}   = \hat {f}_y & \textrm{on} \; \hat{\Gamma}\times (0,T), \label{structure2}
\end{align}
where $\hat{\boldsymbol \eta} (\hat{x},t)= (\hat\eta_x(\hat{x},t), \hat\eta_y(\hat{x},t))$ denotes the axial and radial displacement, $\hat{\boldsymbol f} = (\hat {f}_x, \hat {f}_y)$ is a vector of surface density of the force applied to the membrane, $\rho_{m}$ denotes the membrane density and 
\begin{equation}\label{coeff}
 \begin{array}{rlrlrl}
 C_0 &= \frac{r_m}{R^2} \big(\frac{2 \mu_m \lambda_m}{\lambda_m+2\mu_m}+2\mu_m\big), \; & C_1 &= r_m \big(\frac{2 \mu_m \lambda_m}{\lambda_m+2\mu_m}+2\mu_m\big), & C_2 &=\frac{r_m}{R} \frac{2 \mu_m \lambda_m}{\lambda_m+2\mu_m}.
\end{array}
\end{equation} 
The coefficients $\mu_m$ and $\lambda_m$ are the Lam\'e coefficients for the membrane. Note that we can write the system~\eqref{structure1}-\eqref{structure2} more compactly as
\begin{equation}\label{KoiterCompact}
 \rho_{m}r_m \frac{\partial^2 \hat{\boldsymbol \eta}}{\partial t^2} + \hat{\mathcal{L}} \hat{\boldsymbol \eta} = \hat{\boldsymbol f}, 
\quad
 \hat{\mathcal{L}} := \left( \begin{array}{cc}
 -C_1 \partial_{\hat{x}\hat{x}} & -C_2 \partial_{\hat{x}}   \\
 C_2 \partial_{\hat{x}} & C_0 
\end{array} \right) .
\end{equation}

The fluid domain is bounded by a deformable porous matrix consisting of a skeleton and connecting pores filled with fluid, whose dynamics is described by the Biot model, which in the primal, Eulerian formulation reads as follows:
\begin{align}
& \rho_{p} \frac{D^2 \boldsymbol U}{D t^2} - \nabla \cdot \boldsymbol \sigma^p = \boldsymbol h & \textrm{in} \; \Omega^p(t)\times  (0,T),  \label{B1}\\
& \frac{D}{D t}(s_0 p_p + \alpha \nabla \cdot \boldsymbol U) - \nabla \cdot (\kappa \nabla p_p) = s & \textrm{in} \; \Omega^p(t) \times  (0,T),\label{B3}
\end{align}
where $\frac{D}{Dt}$ denotes the classical concept of material derivative.
The stress tensor of the poroelastic medium is given by
$
\boldsymbol \sigma^p = \boldsymbol \sigma^E - \alpha p_p \boldsymbol I,
$
where $\boldsymbol \sigma^E$ denotes the elasticity stress tensor. With the assumption that the displacement $\boldsymbol U = (U_x, U_y)$ of the skeleton is connected to stress tensor $\boldsymbol \sigma^E$ via the Saint-Venant Kirchhoff elastic model, we have
$
\boldsymbol \sigma^E (\boldsymbol U) = 2 \mu_p  \boldsymbol D (\boldsymbol U) + \lambda_p \textrm{tr}(\boldsymbol D(\boldsymbol U)) \boldsymbol I,
$
where $\lambda_p$ and $\mu_p$ denote the Lam\'e coefficients for the skeleton, and, with the hypothesis of ``small'' deformations, $\boldsymbol D(\boldsymbol U) = (\nabla \boldsymbol U+(\nabla \boldsymbol U)^{T})/2.$ 

System~\eqref{B1}-\eqref{B3} consists of the momentum equation for the balance of total forces~\eqref{B1}, and the storage equation~\eqref{B3} for the fluid mass conservation in the pores of the matrix, where $p_p$ is the fluid pressure. The density of saturated porous medium is denoted by $\rho_{p}$, and $\kappa$ denotes the uniformly positive definite hydraulic
conductivity tensor. For simplicity of the presentation we assume that 
$\kappa$ is a scalar function. The coefficient $c_0>0$ is the storage coefficient, and the Biot-Willis constant $\alpha$ is the pressure-storage coupling coefficient. 
The relative velocity of the fluid within the porous structure $\boldsymbol q$ can be reconstructed via Darcy's law
$$ \boldsymbol q = -\kappa \nabla p_p \quad  \textrm{in} \; \Omega^p(t) \times (0,T).$$

Denote the inlet and outlet poroelastic structure boundaries, respectively, by $\Gamma_{in}^p = \{(0,y)| \; R<y<R+r_p\}$ and $\Gamma_{out}^p = \{(L,y)| \; R<y<R+r_p\}$, and the reference exterior boundary by $\hat{\Gamma}^p_{ext}=\{(x,R+r_p)| \; 0<x<L\}$.
We assume that the poroelastic structure is fixed at the inlet and outlet boundaries:
\begin{equation}\label{homostructure1}
 \boldsymbol U  = 0 \quad \textrm{on} \; \Gamma^p_{in} \cup \Gamma^p_{out} \times (0,T),
\end{equation}
that the external structure boundary $\Gamma^p_{ext}(t)$ is exposed to external ambient pressure 
\begin{eqnarray}
 \boldsymbol {n}_{ext} \cdot \boldsymbol \sigma^E \boldsymbol {n}_{ext} &=&  -p_e \quad \textrm{on} \; \Gamma^p_{ext}(t) \times (0,T),
\end{eqnarray}
where  $\boldsymbol n_{ext}$ is the outward unit normal vector on $\Gamma^p_{ext}(t)$,
and that the tangential displacement of the exterior boundary is zero:
\begin{equation}
U_x =  0 \quad \textrm{on} \; \Gamma^p_{ext}(t) \times (0,T).
\end{equation}
On the fluid pressure in the porous medium, we impose drained boundary conditions~\cite{detournay1993fundamentals}:
\begin{equation}
p_p = 0 \quad \textrm{on} \; \Gamma^p_{ext}(t) \cup \Gamma^p_{in} \cup \Gamma^p_{out}  \times (0,T).
\end{equation}
Initially, the fluid, elastic membrane and the poroelastic structure are assumed to be at rest, with zero displacement from the reference configuration
\begin{equation}\label{initial}
 \boldsymbol{v}=0, \quad \boldsymbol U = 0, \quad \frac{D \boldsymbol U}{D t}=0, \quad \hat{\boldsymbol \eta} = 0, \quad \frac{\partial \hat{\boldsymbol \eta}}{\partial t}=0, \quad \boldsymbol q =0, \quad p_p=0.
 \end{equation}
 
 To deal with the motion of the fluid domain we adopt the Arbitrary Lagrangian-Eulerian (ALE) approach~\cite{hughes1981lagrangian,donea1983arbitrary,nobile2001numerical}.
In the context of finite element method approximation of moving-boundary problems,
ALE method deals efficiently with the deformation of the mesh, especially at the boundary and near the interface
between the fluid and the structure,
and with the issues related to the approximation of the time-derivatives
$\partial \boldsymbol v/\partial t \approx (\boldsymbol {v}(t^{n+1})-\boldsymbol {v}(t^{n}))/\Delta t$ which,
due to the fact that $\Omega^f(t)$ depends on time, is not well defined since
the values $\boldsymbol {v}(t^{n+1})$ and $\boldsymbol {v}(t^{n})$ 
correspond to the values of $\boldsymbol {v}$ defined at two different domains.
Following the ALE approach, we introduce two 
families of (arbitrary, invertible, smooth) mappings ${\cal{A}}_t$ and ${\cal{S}}_t$, defined on 
 reference domains $\hat{\Omega}^f$ and $\hat{\Omega}^p$, respectively, which track the domain in time:
\begin{eqnarray}
& & \mathcal{A}_t : \hat{\Omega}^f  \rightarrow \Omega^f(t) \subset \mathbb{R}^2,  \quad \boldsymbol{x}=\mathcal{A}_t(\hat{\boldsymbol{x}}) \in \Omega^f(t), \; \; \textrm{for} \; \hat{\boldsymbol{x}} \in \hat{\Omega}^f, \label{At}\\
& & \mathcal{S}_t : \hat{\Omega}^p  \rightarrow \Omega^p(t) \subset \mathbb{R}^2,  \quad \boldsymbol{x}=\mathcal{S}_t(\hat{\boldsymbol{x}}) \in \Omega^p(t), \quad \textrm{for} \; \hat{\boldsymbol{x}} \in \hat{\Omega}^p. \label{lt}
\end{eqnarray}
Note that the fluid domain is determined by the displacement of the membrane $\hat{\boldsymbol \eta}$, while the porous medium domain is determined by its displacement $\hat{\boldsymbol U}$, where $\hat{\boldsymbol U}$ is the displacement of the porous medium evaluated at the reference configuration..  
However, because of condition~\eqref{CBJS}, we can define a homeomorphism over $\Omega^f(t)\cup\Omega^p(t)$ by setting mappings $\mathcal{A}_t$ and $\mathcal{S}_t$ equal on $\Gamma(t)$. 
For the structure, we adopt the material mapping
\begin{equation}
\mathcal{S}_t(\hat{\boldsymbol x}) = \hat{\boldsymbol x} + \hat{\boldsymbol U} (\hat{\boldsymbol x}, t), \quad \forall \hat{\boldsymbol x} \in \hat{\Omega}^p.
\end{equation}
Since the mapping $\mathcal{A}_t$ is arbitrary, with the only requirement that it matches $\mathcal{S}_t$ on $\Gamma(t)$, we can define $\mathcal{A}_t$ as
\begin{equation}
\mathcal{A}_t(\hat{\boldsymbol x}) = \hat{\boldsymbol x} + \textrm{Ext}(\hat{\boldsymbol \eta} (\hat{x}, t) )= \hat{\boldsymbol x} + \textrm{Ext}(\hat{\boldsymbol U} (\hat{\boldsymbol x}, t)|_{\hat{\Gamma}}), \quad \forall \hat{\boldsymbol x} \in \hat{\Omega}^f.
\end{equation}

\subsection{The coupling conditions}
In order to prescribe the coupling conditions on the physical fluid-structure
interface $\Gamma(t)$, denote by  $\boldsymbol \eta := \hat{\boldsymbol \eta} \circ (\mathcal{A}_t^{-1}|_{\Gamma(t)})$, where $A_t$ is defined in~\eqref{At}.
While the lumen and the poroelastic medium contain fluid, we assume that the elastic membrane does not contain fluid, but allows the flow 
through it in the normal direction. This is a reasonable assumption because the elastic membrane represents tunica intima. It has been shown by experimental studies that the normal
transport in tunica intima is significantly greater than tangential transport~\cite{lee2008permeability}.
Denote by $\boldsymbol n$ the outward normal to the fluid domain and by $\boldsymbol \tau$ the tangential unit vector.
Thus, the fluid, elastic membrane and poroelastic structure are coupled via the following boundary conditions:
\begin{itemize}
 \item Mass conservation: since the thin lamina allows the flow through it, the continuity of normal flux is
\begin{equation}
  \boldsymbol{v} \cdot \boldsymbol n  =  \bigg(\alpha \frac{D \boldsymbol U}{D t} -\kappa \nabla p_p \bigg)\cdot \boldsymbol n  \quad \textrm{on} \; \Gamma(t).\label{CNF}
\end{equation} 
\item Since we do not allow filtration in the tangential direction, we prescribe no-slip boundary conditions between the fluid in the lumen and the elastic membrane, and between the elastic membrane and poroelastic medium: 
\begin{equation}
\boldsymbol v \cdot \boldsymbol \tau = \frac{\partial \boldsymbol \eta}{\partial t} \cdot \boldsymbol \tau, \quad 
\boldsymbol \eta  = \boldsymbol U \quad \textrm{on} \; \Gamma(t). \label{CBJS}
\end{equation}
\item Balance of normal components of the stress in the fluid phase:
\begin{equation}
 \boldsymbol n \cdot \boldsymbol \sigma^f \boldsymbol n = -p_p  \quad \textrm{on} \; \Gamma(t). \label{CBNSF}
\end{equation}
\item The conservation of momentum describes balance of contact forces. Precisely, it says that the sum of contact forces at the fluid-porous medium interface is equal to zero:
\begin{eqnarray}
\alpha \boldsymbol n \cdot \boldsymbol{\sigma}^f \boldsymbol{ n} - \boldsymbol n \cdot \boldsymbol {\sigma}^p \boldsymbol n + J^{-1}\boldsymbol f \cdot \boldsymbol n= 0 \quad \textrm{on} \; \Gamma(t),   \label{CBFN} \\
\boldsymbol \tau \cdot \boldsymbol{\sigma}^f \boldsymbol{ n} - \boldsymbol \tau \cdot \boldsymbol {\sigma}^p \boldsymbol n + J^{-1} \boldsymbol f \cdot \boldsymbol \tau= 0 \quad \textrm{on} \; \Gamma(t),   \label{CBFT}
\end{eqnarray}
where $\boldsymbol f := \hat{\boldsymbol f} \circ (\mathcal{A}_t^{-1}|_{\Gamma(t)})$, and $J$ denotes the Jacobian of the transformation from $\Gamma(t)$ to $\hat{\Gamma}$ given by
\begin{equation}\label{Jacobian}
J = \sqrt{\bigg(1+ \frac{\partial \eta_x}{\partial x} \bigg)^2+\bigg(\frac{\partial \eta_y}{\partial x}\bigg)^2}.
\end{equation}
\end{itemize}

\subsection{Weak formulation of the monolithic problem}
For a domain $\Omega$, we denote by $|| \cdot ||_{H^k(\Omega)}$ the norm in the Sobolev space $H^k(\Omega)$. The norm in $L^2(\Omega)$ is denoted by $||\cdot||_{L^2(\Omega)}$, and the $L^2(\Omega)-$ inner product by $(\cdot,\cdot)_{\Omega}$.
To find a weak form of the Navier-Stokes equation, introduce the following test function spaces: 
\begin{eqnarray}
V^f(t) &=& \{\boldsymbol {\varphi}: \Omega^f(t) \rightarrow \mathbb{R}^2| \; \boldsymbol \varphi = \hat{\boldsymbol \varphi} \circ (\mathcal{A}_t)^{-1}, \hat{\boldsymbol \varphi} \in (H^1(\hat{\Omega}^f))^2, \quad \varphi_y=0  \; \textrm{on} \; \Gamma_0^f\}, \label{Vf(t)} \\
Q^f(t) &=& \{\psi: \Omega^f(t) \rightarrow \mathbb{R}| \; \psi = \hat{\psi} \circ (\mathcal{A}_t)^{-1}, \hat{\psi} \in L^2(\hat{\Omega}^f) \}, \label{Q(t)} 
\end{eqnarray}
for all $t \in [0,T)$.
The variational formulation of the Navier-Stokes equations now reads: given $t \in (0, T)$ find $(\boldsymbol v, p_f) \in  V^f(t) \times Q^f(t)$ such that for all $(\boldsymbol \varphi^f, \psi^f) \in V^f(t) \times Q^f(t)$
\begin{equation*}
\rho_f \int_{\Omega^f(t)} \frac{\partial \boldsymbol v}{\partial t} \cdot \boldsymbol \varphi^f d\boldsymbol x+ \rho_f \int_{\Omega^f(t)} (\boldsymbol v \cdot \nabla) \boldsymbol v \cdot \boldsymbol \varphi^f d \boldsymbol x  +2 \mu_f \int_{\Omega^f(t)} \boldsymbol D(\boldsymbol v) : \boldsymbol D(\boldsymbol \varphi^f) d \boldsymbol x 
\end{equation*}
\begin{equation*}
- \int_{\Omega^f(t)} p_f \nabla \cdot \boldsymbol \varphi^f d \boldsymbol x + \int_{\Omega^f(t)} \psi^f \nabla \cdot \boldsymbol v d \boldsymbol x = \int_{\Gamma(t)} \boldsymbol \sigma^f \boldsymbol n \cdot \boldsymbol \varphi^f ds
\end{equation*}
\begin{equation}
 + \int_{\Omega^f(t)} \boldsymbol g \cdot \boldsymbol \varphi^f d \boldsymbol x+ \int_{\Gamma_{in}} p_{in}(t) \varphi^f_x dy. \label{NSWeak}
\end{equation}

In order to write the weak form of the linearly elastic Koiter membrane, let $\hat{V}^m = (H_0^1(0,L))^2$.
Then the weak formulation reads as follows: given $t \in (0,T)$ find $\hat{\boldsymbol \eta} \in \hat{V}^m$ such that for all $\hat{\boldsymbol \zeta} \in \hat{V}^m$
\begin{equation*}
\rho_{m} r_m \int_0^L \frac{\partial^2 \hat\eta_x}{\partial t^2} \hat\zeta_x d\hat{x} + \rho_{m} r_m \int_0^L \frac{\partial^2 \hat\eta_y}{\partial t^2} \hat\zeta_y d\hat{x} -C_2 \int_0^L \frac{\partial \hat\eta_y}{\partial \hat{x}} \hat\zeta_x d\hat{x} +C_1 \int_0^L \frac{\partial \hat\eta_x}{\partial \hat{x}} \frac{\partial \hat\zeta_x}{\partial \hat{x}} d\hat{x}
\end{equation*}
\begin{equation}
+C_0 \int_0^L \hat\eta_y \hat\zeta_y d\hat{x}  + C_2 \int_0^L  \frac{\partial \hat\eta_x}{\partial \hat{x}} \hat\zeta_y d\hat{x}  = \int_0^L \hat{\boldsymbol f} \cdot \hat{\boldsymbol \zeta}  d\hat{x} . \label{KoiterWeak}
\end{equation}

Finally, let us introduce
\begin{eqnarray*}
V^p(t) &=& \{\boldsymbol {\varphi}: \Omega^p(t) \rightarrow \mathbb{R}^2| \; \boldsymbol \varphi = \hat{\boldsymbol \varphi} \circ (\mathcal{S}_t)^{-1}, \hat{\boldsymbol \varphi} \in (H^1(\hat{\Omega}^p))^2,  
\ \boldsymbol \varphi =0 \; \textrm{on} \; \Gamma^p_{in}\cup \Gamma^p_{out}, \varphi_x=0 \;\textrm{on} \;\Gamma^p_{ext}(t)\}, \label{Vp}\\
Q^p(t) &=& \{\psi: \Omega^p(t) \rightarrow \mathbb{R}| \; \psi = \hat{\psi} \circ (\mathcal{S}_t)^{-1}, \hat{\psi} \in H^1(\hat{\Omega}^p),  \psi|_{\partial \Omega^p(t) \backslash \Gamma(t)} = 0 \}. \label{Qp}
\end{eqnarray*}
Now the weak form of the Biot system reads as follows:  given $t \in (0, T)$ find $(\boldsymbol U, p_p) \in  V^p(t) \times Q^p(t)$ such that for all $(\boldsymbol \varphi^p, \psi^p) \in V^p(t)\times Q^p(t)$
\begin{equation*}
\rho_{p} \int_{\Omega^p(t)} \frac{D^2 \boldsymbol U}{D t^2} \boldsymbol \varphi^p d \boldsymbol x + \int_{\Omega^p(t)} \boldsymbol \sigma^E : \nabla \boldsymbol \varphi^p d \boldsymbol x - \alpha \int_{\Omega^p(t)} p_p \nabla \cdot \boldsymbol \varphi^p d \boldsymbol x +  \int_{\Omega^p(t)} s_0 \frac{D p_p}{D t} \psi^p d \boldsymbol x
\end{equation*}
\begin{equation*}
+\alpha  \int_{\Omega^p(t)} \nabla \cdot \frac{D \boldsymbol U}{D t} \psi^p d \boldsymbol x + \int_{\Omega^p(t)} \kappa \nabla p_p \cdot \nabla \psi^p d \boldsymbol =  -\int_{\Gamma(t)} \boldsymbol \sigma^p \boldsymbol n \cdot \boldsymbol \varphi^p d x - \int_{\Gamma(t)} \kappa \nabla p_p \cdot \boldsymbol n \psi^p d x 
\end{equation*}
\begin{equation}
-\int_{\Gamma^p_{ext}} p_e \boldsymbol \varphi^p_y dx+  \int_{\Omega^p(t)} \boldsymbol h \cdot \boldsymbol \varphi^p d \boldsymbol x+  \int_{\Omega^p(t)} s \boldsymbol \varphi^s d \boldsymbol x. \label{BiotWeak}
\end{equation}

To write a weak formulation of the coupled Navier-Stokes/Koiter/Biot system, define a space of admissible solutions
\begin{eqnarray}
W(t) &=&  \{ (\boldsymbol {\varphi}^f, \hat{\boldsymbol \zeta}, \boldsymbol \varphi^p) \in V^f(t) \times \hat{V}^m \times V^p(t) | \ \boldsymbol \zeta = \boldsymbol \varphi^p|_{\Gamma(t)}, \boldsymbol \varphi^f|_{\Gamma(t)} \cdot \boldsymbol \tau = \boldsymbol \zeta \cdot \boldsymbol \tau\}, 
\end{eqnarray}
where $\boldsymbol \zeta := \hat{\boldsymbol \zeta} \circ (\mathcal{A}_t^{-1}|_{\Gamma(t)})$, and add together equations~\eqref{NSWeak},~\eqref{KoiterWeak} and~\eqref{BiotWeak}:
\begin{equation*}
\rho_f \int_{\Omega^f(t)} \frac{\partial \boldsymbol v}{\partial t} \cdot \boldsymbol \varphi^f d\boldsymbol x+ \rho_f \int_{\Omega^f(t)} (\boldsymbol v \cdot \nabla) \boldsymbol v \cdot \boldsymbol \varphi^f d \boldsymbol x  +2 \mu_f \int_{\Omega^f(t)} \boldsymbol D(\boldsymbol v) : \boldsymbol D(\boldsymbol \varphi^f) d \boldsymbol x 
\end{equation*}
\begin{equation*}
- \int_{\Omega^f(t)} p_f \nabla \cdot \boldsymbol \varphi^f d \boldsymbol x + \int_{\Omega^f(t)} \psi^f \nabla \cdot \boldsymbol v d \boldsymbol x +\rho_{m} r_m \int_0^L \frac{\partial^2 \hat\eta_x}{\partial t^2} \hat\zeta_x dx + \rho_{m} r_m \int_0^L \frac{\partial^2 \hat\eta_y}{\partial t^2} \hat\zeta_y dx
\end{equation*}
\begin{equation*}
 -C_2 \int_0^L \frac{\partial \hat\eta_y}{\partial \hat{x}} \hat\zeta_x dx +C_1 \int_0^L \frac{\partial \hat\eta_x}{\partial \hat{x}} \frac{\partial \hat\zeta_x}{\partial \hat{x}} dx+C_0 \int_0^L \hat\eta_y \hat\zeta_y dx + C_2 \int_0^L  \frac{\partial \hat\eta_x}{\partial \hat{x}} \hat\zeta_y dx
\end{equation*}
\begin{equation*}
+\rho_{p} \int_{\Omega^p(t)} \frac{D^2 \boldsymbol U}{D t^2} \boldsymbol \varphi^p d \boldsymbol x + \int_{\Omega^p(t)}  \boldsymbol \sigma^E : \nabla \boldsymbol \varphi^p d \boldsymbol x - \alpha \int_{\Omega^p(t)} p_p \nabla \cdot \boldsymbol \varphi^p d \boldsymbol x  +  \int_{\Omega^p(t)} s_0 \frac{D p_p}{D t} \psi^p d \boldsymbol x
\end{equation*}
\begin{equation*}
+\alpha  \int_{\Omega^p(t)} \nabla \cdot \frac{D \boldsymbol U}{D t} \psi^p d \boldsymbol x  + \int_{\Omega^p(t)} \kappa \nabla p_p \cdot \nabla \psi^p d \boldsymbol = \int_{\Gamma(t)} \boldsymbol \sigma^f \boldsymbol n \cdot \boldsymbol \varphi^f d s  -\int_{\Gamma(t)} \boldsymbol \sigma^p \boldsymbol n \cdot \boldsymbol \varphi^p d x 
\end{equation*}
\begin{equation*}
- \int_{\Gamma(t)} \kappa \nabla p_p \cdot \boldsymbol n \psi^p d x  +  \int_0^L \hat{\boldsymbol f} \cdot \hat{\boldsymbol \zeta}  dx+ \int_{\Omega^f(t)} \boldsymbol g \cdot \boldsymbol \varphi^f d \boldsymbol x+ \int_{\Gamma_{in}} p_{in}(t) \varphi^f_x dy 
\end{equation*}
\begin{equation}
-\int_{\Gamma^p_{ext}} p_e \boldsymbol \varphi^p_y dx
  +\int_{\Omega^p(t)} \boldsymbol h \cdot \boldsymbol \varphi^p d \boldsymbol x+ \int_{\Omega^p(t)} s \psi^p d \boldsymbol x. 
\end{equation}
Denote by $I_{\Gamma(t)}$ the interface integral
\begin{equation*}
I_{\Gamma(t)} =  \int_{\Gamma(t)} (\boldsymbol \sigma^f \boldsymbol n \cdot \boldsymbol \varphi^f - \boldsymbol \sigma^p \boldsymbol n \cdot \boldsymbol \varphi^p- \kappa \nabla p_p \cdot \boldsymbol n \psi^p  + J^{-1} \boldsymbol f \cdot \boldsymbol \zeta ) dx.
\end{equation*}

Decomposing the stress terms and thin shell forcing term into their normal and tangential components and employing conditions~\eqref{CNF} and~\eqref{CBNSF} we have
\begin{equation*}
I_{\Gamma(t)} = \int_{\Gamma(t)} \bigg( -p_p  \boldsymbol  \varphi^f \cdot \boldsymbol n - (\boldsymbol n \cdot  \boldsymbol \sigma^p \boldsymbol n) (\boldsymbol  \varphi^p \cdot \boldsymbol n)  + J^{-1}(\boldsymbol f \cdot \boldsymbol n) (\boldsymbol \zeta \cdot \boldsymbol n) +\boldsymbol v \cdot \boldsymbol n \psi^p -\alpha \frac{D \boldsymbol U}{D t} \cdot \boldsymbol n\psi^p
\end{equation*}
\begin{equation*}
+ (\boldsymbol \tau \cdot \boldsymbol \sigma^f \boldsymbol n ) (\boldsymbol \varphi^f \cdot \boldsymbol \tau) - (\boldsymbol \tau \cdot \boldsymbol \sigma^p \boldsymbol n) (\boldsymbol \varphi^p \cdot \boldsymbol \tau)+ J^{-1}(\boldsymbol f \cdot \boldsymbol \tau) (\boldsymbol \zeta \cdot \boldsymbol \tau) \bigg) dx.
\end{equation*}
For each triple of test functions $(\boldsymbol \varphi^f , \hat{\boldsymbol \zeta}, \boldsymbol \varphi^p) \in W(t)$, and due to the condition~\eqref{CBFT} we have
\begin{equation*}
I_{\Gamma(t)} = \int_{\Gamma(t)} \bigg( -p_p  \boldsymbol  \varphi^f \cdot \boldsymbol n - (\boldsymbol n \cdot  \boldsymbol \sigma^p \boldsymbol n) (\boldsymbol  \varphi^p \cdot \boldsymbol n)  + J^{-1}(\boldsymbol f \cdot \boldsymbol n) (\boldsymbol \varphi^p \cdot \boldsymbol n) +\boldsymbol v \cdot \boldsymbol n \psi^p -\alpha \frac{D \boldsymbol U}{D t} \cdot \boldsymbol n\psi^p \bigg) dx.
\end{equation*}
Finally, decomposing $\boldsymbol \sigma^p \boldsymbol n$ into $\boldsymbol \sigma^E \boldsymbol n - \alpha p_p \boldsymbol n$, and employing conditions~\eqref{CBNSF} and~\eqref{CBFN} we have
\begin{equation*}
I_{\Gamma(t)} = \int_{\Gamma(t)} \bigg( -p_p  \boldsymbol  \varphi^f \cdot \boldsymbol n +\alpha p_p  \boldsymbol  \varphi^p \cdot \boldsymbol n +\boldsymbol v \cdot \boldsymbol n \psi^p -\alpha \frac{D \boldsymbol U}{D t} \cdot \boldsymbol n\psi^p \bigg) dx.
\end{equation*}

Thus, the weak formulation of the coupled Navier-Stokes/Koiter/Biot system reads as follows:  given $t \in (0, T)$ find $(\boldsymbol v, \hat{\boldsymbol \eta}, \boldsymbol U, p_f, p_p) \in V^f(t) \times \hat{V}^m\times V^p(t) \times Q^f(t)\times Q^p(t)$, with $(\boldsymbol v, \frac{\partial \hat{\boldsymbol \eta}}{\partial t}, \frac{ D\boldsymbol U}{Dt}) \in W(t),$ such that for all $(\boldsymbol \varphi^f,  \hat{\boldsymbol \zeta}, \boldsymbol \varphi^p, \psi^f, \psi^p) \in  W(t) \times Q^f(t) \times Q^p(t)$
\begin{equation*}
\rho_f \int_{\Omega^f(t)} \frac{\partial \boldsymbol v}{\partial t} \cdot \boldsymbol \varphi^f d\boldsymbol x+ \int_{\Omega^f(t)} (\boldsymbol v \cdot \nabla) \boldsymbol v \cdot \boldsymbol \varphi^f d \boldsymbol x  +2 \mu_f \int_{\Omega^f(t)} \boldsymbol D(\boldsymbol v) : \boldsymbol D(\boldsymbol \varphi^f) d \boldsymbol x 
\end{equation*}
\begin{equation*}
- \int_{\Omega^f(t)} p_f \nabla \cdot \boldsymbol \varphi^f d \boldsymbol x + \int_{\Omega^f(t)} \psi^f \nabla \cdot \boldsymbol v d \boldsymbol x +\rho_{m} r_m \int_0^L \frac{\partial^2 \hat\eta_x}{\partial t^2} \hat\zeta_x dx + \rho_{m} r_m \int_0^L \frac{\partial^2 \hat\eta_y}{\partial t^2} \hat\zeta_y dx
\end{equation*}
\begin{equation*}
 -C_2 \int_0^L \frac{\partial \hat\eta_y}{\partial \hat{x}} \hat\zeta_x dx +C_1 \int_0^L \frac{\partial \hat\eta_x}{\partial \hat{x}} \frac{\partial \hat\zeta_x}{\partial \hat{x}} dx+C_0 \int_0^L \hat\eta_y \hat\zeta_y dx  + C_2 \int_0^L  \frac{\partial \hat\eta_x}{\partial \hat{x}} \hat\zeta_y dx
\end{equation*}
\begin{equation*}
+\rho_{p} \int_{\Omega^p(t)} \frac{D^2 \boldsymbol U}{D t^2}\boldsymbol \varphi^p d \boldsymbol x + \int_{\Omega^p(t)} \boldsymbol \sigma^E : \nabla \boldsymbol \varphi^p d \boldsymbol x  - \alpha \int_{\Omega^p(t)} p_p \nabla \cdot \boldsymbol \varphi^p d \boldsymbol x+\int_{\Omega^p(t)} s_0 \frac{D p_p}{D t} \psi^p d \boldsymbol x
\end{equation*}
\begin{equation*}
+\alpha  \int_{\Omega^p(t)} \nabla \cdot \frac{D \boldsymbol U}{D t} \psi^p d \boldsymbol x +  \int_{\Omega^p(t)} \kappa \nabla p_p \cdot \nabla \psi^p d \boldsymbol x +\int_{\Gamma(t)} p_p  \boldsymbol  \varphi^f \cdot \boldsymbol n dx -\alpha \int_{\Gamma(t)} p_p  \boldsymbol  \varphi^p\cdot \boldsymbol n dx
\end{equation*}
\begin{equation*}
 -\int_{\Gamma(t)}\boldsymbol v \cdot \boldsymbol n \psi^p dx +\alpha \int_{\Gamma(t)}\frac{D \boldsymbol U}{D t} \cdot \boldsymbol n\psi^p dx =  \int_{\Omega^f(t)} \boldsymbol g \cdot \boldsymbol \varphi^f d \boldsymbol x+ \int_{\Gamma_{in}} p_{in}(t) \varphi^f_x dy
 -\int_{\Gamma^p_{ext}} p_e \boldsymbol \varphi^p_y dx
 \end{equation*}
\begin{equation}
  +\int_{\Omega^p(t)} \boldsymbol h \cdot \boldsymbol \varphi^p d \boldsymbol x 
 +  \int_{\Omega^p(t)} s \psi^p d \boldsymbol x.  \label{NSMBweak}
\end{equation}

\subsection{Energy equality}
In this section we will use an equivalent variational formulation to~\eqref{KoiterWeak} (see~\cite{canic2006modeling} for details), given by:
\begin{equation*}
 \rho_{m} r_m \int_0^L \frac{\partial^2 \hat\eta_x}{\partial t^2} \hat\zeta_x d\hat{x} + \rho_{m} r_m \int_0^L \frac{\partial^2 \hat\eta_y}{\partial t^2} \hat\zeta_y d\hat{x} + \frac{r_m}{2} \int_0^L  4 \mu_m (\frac{\partial \hat\eta_x}{\partial \hat{x}}\frac{\partial \hat\zeta_x}{\partial \hat{x}} +\frac{1}{R^2} \hat\eta_y \hat\zeta_y) d\hat{x}
\end{equation*}
\begin{equation}
 +\frac{r_m}{2} \int_0^L \frac{4 \mu_m \lambda_m}{\lambda_m+2\mu_m} (\frac{\partial \hat\eta_x}{\partial \hat{x}}+\frac{1}{R} \hat\eta_y)(\frac{\partial \hat\zeta_x}{\partial \hat{x}}+\frac{1}{R} \hat\zeta_y) d\hat{x} = \int_0^L \hat{\boldsymbol f} \cdot \hat{\boldsymbol \zeta} d\hat{x}. \label{KoiterWeakEnergy}
\end{equation}
To formally derive the energy of the coupled problem, we add together variational formulations for Navier-Stokes equations~\eqref{NSWeak}, Koiter membrane model~\eqref{KoiterWeakEnergy}, and Biot system~\eqref{BiotWeak}. 
The coupling conditions are then used to couple the fluid, thin structure, and porous medium sub-problems, using manipulations as in the previous section, resulting in an equation similar to~\eqref{NSMBweak}.
Let $$(\boldsymbol \varphi^f,  \hat{\boldsymbol \zeta},  \boldsymbol \varphi^p, \psi^f, \psi^p)=(\boldsymbol v,  \frac{\partial \hat{\boldsymbol \eta}}{\partial t}, \frac{D \boldsymbol U}{D t}, p_f, p_p).$$
Then, the energy equality for coupled system is given as follows:
\begin{equation*}
\frac{1}{2}\frac{d}{dt} \bigg\{\rho_f ||\boldsymbol v||^2_{L^2(\Omega^f(t))} + \rho_{m} r_m \bigg|\bigg|\frac{\partial \hat\eta_x}{\partial t} \bigg|\bigg|^2_{L^2(0,L)} +\rho_{m} r_m \bigg|\bigg|\frac{\partial \hat\eta_y}{\partial t} \bigg|\bigg|^2_{L^2(0,L)}\bigg\} 
\end{equation*}
 \begin{equation*}
 +\displaystyle r_m \left[
  4 \mu_m \bigg|\bigg|\frac{\hat\eta_y}{R}\bigg|\bigg|^2_{L^2(0,L)} +
  4 \mu_m \bigg|\bigg|\frac{\partial \hat\eta_x}{\partial \hat{x}} \bigg|\bigg|^2_{L^2(0,L)}
  +\frac{4 \mu_m \lambda_m}{\lambda_m+2\mu_m} \bigg|\bigg|\frac{\partial \hat\eta_x}{\partial \hat{x}}+ \frac{\hat\eta_y}{R} \bigg|\bigg|^2_{L^2(0,L)}
 \right]
\end{equation*}
\begin{equation*}
+\rho_{p}\bigg|\bigg|\frac{D \boldsymbol U}{D t} \bigg| \bigg|^2_{L^2(\Omega^p(t))}   +2\mu_p ||\boldsymbol D(\boldsymbol U)||^2_{L^2(\Omega^p(t))} +  \lambda_p ||\nabla \cdot \boldsymbol U||^2_{L^2(\Omega^p(t))}   +s_0||p_p||^2_{L^2(\Omega^p(t))} \bigg\} 
\end{equation*}
\begin{equation*}
 + 2 \mu_f ||\boldsymbol D(\boldsymbol v)||^2_{L^2(\Omega(t))}  
 \nonumber\\+   ||\kappa \nabla p_p||^2_{L^2(\Omega^p(t))}
=   \int_{\Omega^f(t)} \boldsymbol g \cdot \boldsymbol v d \boldsymbol x
+\int_{\Gamma_{in}} p_{in}(t) v_x dy 
\end{equation*}
\begin{equation*}
   -\int_{\Gamma^p_{ext}} p_e \frac{D U_y}{D t} dx +\int_{\Omega^p(t)} s p_p d \boldsymbol x  +\int_{\Omega^p(t)} \boldsymbol h \cdot \frac{D \boldsymbol U}{D t} d \boldsymbol x.
\end{equation*}

\section{A loosely-coupled scheme based on the operator-splitting approach}

To solve the fluid-multilayer structure interaction problem described in Section~1 numerically,
we propose a loosely coupled scheme based on a time-splitting approach known as the Lie splitting~\cite{glowinski2003finite}. Details of the Lie splitting are described below.

\subsection{The Lie scheme}\label{sec:Lie}
Let $A$ be an operator from a Hilbert space $H$ into itself, and suppose $\phi_0 \in$ $H$.
Consider the following initial value problem:
\begin{equation}\label{LieProblem}
\frac{\partial \phi}{\partial t} + A(\phi) = 0, \quad \textrm{in} \ (0,T), \quad {\rm where}\quad  A = \sum\limits_{i=1}^P A_i,
\quad
\phi(0) = \phi_0.
\end{equation}
The Lie scheme consists of splitting the full problem into $P$ sub-problems, each defined by the operator $A_i, i = 1,...,P$.
The original problem is discretized in time with the time step $\Delta t>0$, so that $t^n=n\Delta t$. The Lie splitting scheme consist of solving a series of problems $ \frac{\partial \phi_i}{\partial t} + A_i(\phi_i) = 0$, for $i = 1,...,P$, each defined
over the entire time interval $(t^n, t^{n+1})$, but with the initial data for the $i^{th}$ problem given by the solution of the $(i-1)^{st}$ problem at $t^{n+1}$.
More precisely, set $\phi^0=\phi_0.$ Then, for $n \geq 0$ compute $\phi^{n+1}$ by solving
\begin{equation}
\frac{\partial \phi_i}{\partial t} + A_i(\phi_i) = 0 \quad \textrm{in} \; (t^n, t^{n+1}),
\quad
\phi_i(t^n) = \phi^{n+(i-1)/P}, 
\end{equation}
and then set $\phi^{n+i/P} = \phi_i(t^{n+1}),$ for $i=1, \dots. P.$
This method is first-order accurate in time.
To increase the accuracy in time to second-order,
a symmetrization of the scheme can be performed.

\subsection{The first-order system in the ALE framework}
To apply the Lie operator splitting scheme, we have to rewrite our system in a first-order form. 
Therefore, we express the second-order time derivatives of both elastic and poroelastic structure displacements in terms of the 
first-order derivative of structure velocities. Furthermore, we consider the Navier-Stokes equation in the ALE formulation.
To write the Navier-Stokes equations in the ALE form, we notice that for a function $f=f(\boldsymbol{x},t)$ defined on $\Omega^f(t) \times (0,T)$ 
the corresponding function $\hat{f} := f \circ \mathcal{A}_t$ defined on $\hat{\Omega} \times (0,T)$
is given by
$$\hat{f}(\hat{\boldsymbol{x}},t) = f(\mathcal{A}_t(\hat{\boldsymbol{x}}),t).$$
Differentiation with respect to time, after using the chain rule, gives
\begin{equation}
 \frac{\partial f}{\partial t}\bigg|_{\hat{\boldsymbol x}} =  \frac{\partial {f}}{\partial t}+\boldsymbol{w} \cdot \nabla f,
 \quad
 \boldsymbol{w}(\boldsymbol{x},t) = \frac{\partial \mathcal{A}_t(\hat{\boldsymbol x})}{\partial t}.\bigg. \label{w}
\end{equation}
where $\boldsymbol{w}$ denotes the domain velocity.
We will apply this rule to write the time-derivative of the velocity in Navier-Stokes equations on the reference domain. Note that we do not have to apply the same rule
to the time-derivatives in the Biot system and in Koiter membrane equations since the material time-derivative is suitable for the time discretization, due to $\frac{D q}{Dt} = \frac{\partial q}{\partial t} \big|_{\hat x}$, 
and the membrane equations are given on the reference configuration.
With these assumptions, our problem now reads:
Given $t \in (0, T)$, find $\boldsymbol {v}= (v_x,v_y)$, $p_f, \boldsymbol{\hat{\eta}}=(\hat{\eta}_x,\hat{\eta}_y), \boldsymbol {\hat{\xi}}=(\hat{\xi}_x,\hat{\xi}_y), \boldsymbol {U}=(U_x,U_y), \boldsymbol{V}=(V_x,V_y)$ and $p_p$,
with $\boldsymbol \eta (\boldsymbol {x},t) = \hat{\boldsymbol \eta}(\mathcal{A}_t^{-1}(\boldsymbol x),t),$ for $\boldsymbol x \in \Gamma(t)$,
 such that
 \begin{subequations}
\begin{align}
& \rho_f \bigg( \displaystyle\frac{\partial \boldsymbol {v}}{\partial t}\bigg|_{\hat{\boldsymbol x}}+ (\boldsymbol {{v}}-\boldsymbol {w}) \cdot \nabla \boldsymbol {v} \bigg)  = \nabla \cdot \boldsymbol\sigma^f +\boldsymbol g & \textrm{in}\; \Omega^f(t) \times (0,T), \label{NSfo} \\
& \nabla \cdot \boldsymbol {v} = 0  &\textrm{in}\; \Omega^f(t) \times (0,T), \\
& \rho_{m} r_m \displaystyle\frac{\partial \hat{\boldsymbol \xi}}{\partial t}+\hat{\mathcal{L}} \hat{\boldsymbol \eta}= \hat {\boldsymbol f}^k   & \textrm{on} \; \hat{\Gamma} \times (0,T), \label{Koiterfo}  \\
& \rho_{m} r_m \bigg(\hat{\boldsymbol \xi}-\displaystyle\frac{\partial \hat{\boldsymbol \eta}}{\partial t} \bigg)=0 & \textrm{on} \; \hat{\Gamma}\times (0,T), \label{etafo} \\
& \rho_{p} \displaystyle\frac{D \boldsymbol V}{D t} = \nabla \cdot \boldsymbol \sigma^p + \boldsymbol h & \textrm{in} \; \Omega^p(t) \times (0,T), \label{structurefo} \\
& s_0 \displaystyle\frac{D}{D t} p_p + \alpha \nabla \cdot \boldsymbol V - \nabla \cdot (\kappa \nabla p_p )= s & \textrm{in} \; \Omega^p(t) \times (0,T), \label{pressurefo}\\
& \rho_{p} \bigg (\boldsymbol V-\displaystyle\frac{D \boldsymbol U}{D t} \bigg)= 0 & \textrm{in} \; \Omega^p(t) \times (0,T), \label{vfo}
\end{align} \label{sys1}
\end{subequations}
with the kinematic coupling conditions on $\Gamma(t)$:
\begin{equation}
\boldsymbol \xi \cdot \boldsymbol \tau = \boldsymbol {v} \cdot \boldsymbol \tau,    \quad \boldsymbol \eta = \boldsymbol {U},   \label{kinematic}
\end{equation}
dynamic coupling conditions on $\Gamma(t)$:
\begin{eqnarray}
\boldsymbol \tau \cdot \boldsymbol{\sigma}^f \boldsymbol{ n} - \boldsymbol \tau \cdot \boldsymbol {\sigma}^p \boldsymbol n + J^{-1} \boldsymbol \tau \cdot \boldsymbol f^k = 0, \label{dyn1}\\
\alpha \boldsymbol n \cdot \boldsymbol{\sigma}^f \boldsymbol{ n} - \boldsymbol n \cdot \boldsymbol {\sigma}^p \boldsymbol n + J^{-1}\boldsymbol n \cdot \boldsymbol f^k= 0, \label{dyn2}\\
 \boldsymbol n \cdot \boldsymbol \sigma^f \boldsymbol n = -p_p, \label{dyn3}
\end{eqnarray}
and the continuity of normal flux on $\Gamma(t)$:
\begin{equation}
\boldsymbol{v} \cdot \boldsymbol n  =  \bigg(\alpha \frac{\partial \boldsymbol \eta}{\partial t} - \kappa \nabla p_p \bigg)\cdot \boldsymbol n, \label{sys2}
\end{equation}
with the boundary and initial conditions given in Section 1.
\if 1=0
The following boundary  conditions on $(\partial \Omega^f(t) \cup \partial \Omega^p(t)) \backslash \Gamma(t)$ are enforced:
\begin{subequations}
\begin{eqnarray}
& & \frac{\partial v_x}{\partial y}(x,0,t) =  v_y(x,0,t) = 0 \quad \textrm{on} \; (0,L)\times (0,T), \\
& & \boldsymbol {v}(0,R,t) = \boldsymbol {v}(L,R,t) = 0, \quad \hat{\boldsymbol\eta}|_{\hat{x}=0,L}=\frac{\partial \hat{\eta}_y}{\partial x}\bigg|_{\hat{x}=0,L}=0 , \\
& &  \boldsymbol U(0, y, t) = \boldsymbol U(L, y, t) = 0, \\
& & \boldsymbol {\sigma}^f \boldsymbol {n}|_{in}(0, y,t) = -p_{in}(t) \boldsymbol {n}|_{in} \; \; \textrm{on} \; \Gamma^f_{in} \times (0,T), \\
& & \boldsymbol {\sigma}^f \boldsymbol {n}|_{out}(L, y,t) = 0 \;\; \textrm{on} \;\; \Gamma^f_{out} \times (0,T), \\
& & \boldsymbol {n}_{ext} \cdot \boldsymbol \sigma^E \boldsymbol {n}_{ext} =  -p_e \; \; \textrm{on} \; \Gamma_{ext}\times (0,T), \\
& & \boldsymbol U_x(x,R+r_p,t) =  0 \; \;\textrm{on} \; (0,L)\times (0,T), \\
& & p_p = 0 \quad \textrm{on} \; \Gamma^p_{ext}(t) \cup \Gamma^p_{in} \cup \Gamma^p_{out}  \times (0,T).
\end{eqnarray}
\end{subequations}
At time $t = 0$ the following initial conditions are prescribed:
\begin{equation}
\boldsymbol {v}|_{t=0} = \boldsymbol {0}, \; \hat{\boldsymbol\eta}|_{t=0} = \boldsymbol {0}, \; \hat{\boldsymbol\xi}|_{t=0} = \boldsymbol {0},\; \boldsymbol U|_{t=0} = \boldsymbol {0},\; \boldsymbol V|_{t=0} = \boldsymbol {0}, \; p_p|_{t=0} = \boldsymbol {0}. \label{sys2}
\end{equation}
\fi

\vskip 0.1in
\noindent
{\bf Remark 1.}
Denote by $\mathcal{L}$ the inverse Piola transformation of $\hat{\mathcal{L}}$, namely
$\mathcal{L} = J^{-1}\hat{\mathcal{L}} F^{-T},$ where $F = \nabla_{\boldsymbol x} \mathcal{A}_t$.
Then, composing the Koiter shell equations~\eqref{KoiterCompact} with $\mathcal{A}_t^{-1}$, and employing the first condition in~\eqref{kinematic}, condition~\eqref{dyn3}, and relation
$$\frac{\partial}{\partial t} (\boldsymbol \xi \cdot \boldsymbol \tau) = \frac{\partial \boldsymbol \xi}{\partial t}\cdot \boldsymbol \tau+ \frac{\partial \boldsymbol \tau}{\partial t}\cdot \boldsymbol \xi,$$ 
we can write conditions~\eqref{dyn1} and~\eqref{dyn2} as follows:
\begin{subequations}
\begin{align}
&\rho_{m} r_m \frac{\partial (\boldsymbol v \cdot  \boldsymbol \tau)}{\partial t} + \boldsymbol \tau \cdot \mathcal{L} \boldsymbol \eta - \rho_{m} r_m  \frac{\partial \boldsymbol \tau}{\partial t}\cdot \boldsymbol \xi+ J \boldsymbol \tau \cdot \boldsymbol{\sigma}^f \boldsymbol{ n} - J \boldsymbol \tau \cdot \boldsymbol {\sigma}^p \boldsymbol n  = 0, \quad  &\textrm{on} \; \Gamma(t) \label{dyn_new1}\\
&\rho_{m} r_m \displaystyle\frac{\partial \boldsymbol \xi}{\partial t}\cdot \boldsymbol n  +\boldsymbol n \cdot \mathcal{L} \boldsymbol \eta- J \alpha p_p - J\boldsymbol n \cdot \boldsymbol {\sigma}^p \boldsymbol n  =0, \quad & \textrm{on} \; \Gamma(t). \label{dyn_new2}
\end{align} \label{dyn_new}
\end{subequations}
We will use conditions~\eqref{dyn1}-\eqref{dyn2} written the form~\eqref{dyn_new1}-\eqref{dyn_new2} when performing the operator splitting.

\subsection{Details of the loosely-coupled scheme}
In this section we will apply the Lie splitting scheme to problem~\eqref{sys1}, where the discretization in time will be done using the backward Euler scheme.
We will denote the discrete time derivatives by 
\begin{equation*}
 d_t \boldsymbol \varphi^{n+1} = \frac{\boldsymbol \varphi^{n+1}-\boldsymbol \varphi^{n}}{\Delta t}, \quad \textrm{and} \quad d_{tt} \boldsymbol \varphi^{n+1} = \frac{d_t \boldsymbol \varphi^{n+1}- d_t \boldsymbol \varphi^{n}}{\Delta t},
\end{equation*}
and the discrete time average by
\begin{equation*}
  \boldsymbol \varphi^{n+1/2} = \frac{\boldsymbol \varphi^{n+1}+\boldsymbol \varphi^{n}}{2},
\end{equation*}
where all quantities are evaluated on the reference domain.
In our case, using the notation from Section~\ref{sec:Lie}, $\phi$ that appears in equation~\eqref{LieProblem} is a vector $\phi=(\boldsymbol v, \boldsymbol v|_{\Gamma(t)} \cdot \boldsymbol \tau, \boldsymbol {\xi} \cdot \boldsymbol n, \boldsymbol \eta, \boldsymbol V, p_p, \boldsymbol U)^T$.
We will split the first-order system~\eqref{sys1}-\eqref{sys2} into two main sub-problems, separating the problem defined on the fluid domain $\Omega^f(t)$ from the problem defined on the poroelastic medium domain $\Omega^p(t)$. 
In that case, we will split the sum of all operators that appear in the system~\eqref{sys1} into two parts, as $A_1 + A_2$, where $A_i = (A_i^f, A_i^{k_1}, A_i^{k_2}, A_i^{\eta}, A_i^V, A_i^{p_p}, A_i^U)^T$, for $i=1,2$.
For each of the equations, this will be done in the following way: 
\begin{itemize}
 \item Equation~\eqref{NSfo} will be split so that $A_1^f = \rho_f (\boldsymbol {{v}}-\boldsymbol {w}) \cdot \nabla \boldsymbol {v}   - \nabla \cdot \boldsymbol\sigma^f$ and $A_2^f = 0$,
 \item Equation~\eqref{Koiterfo} will be used in form~\eqref{dyn_new}, where equation~\eqref{dyn_new1} will be split so 
 $$A_1^{k_1} = - \rho_{m} r_m  \frac{\partial \boldsymbol \tau}{\partial t}\cdot \boldsymbol \xi+ J \boldsymbol \tau \cdot \boldsymbol{\sigma}^f \boldsymbol{ n} \quad \textrm{and} \quad A_2^{k_1} = \boldsymbol \tau \cdot \mathcal{L} \boldsymbol \eta - J \boldsymbol \tau \cdot \boldsymbol {\sigma}^p \boldsymbol n, $$ and equation~\eqref{dyn_new2} will be split so 
 $$A_1^{k_2} = 0 \quad  \textrm{and} \quad A_2^{k_2} =\boldsymbol n \cdot \mathcal{L} \boldsymbol \eta- J \alpha p_p - J\boldsymbol n \cdot \boldsymbol {\sigma}^p \boldsymbol n.$$ 
 \item Equation~\eqref{etafo} will be split so that $A_1^{\eta} = 0$, and $A_1^{\eta} = \boldsymbol \xi,$
 \item Equation~\eqref{structurefo} will be split so that $A_1^{V} = 0$ and $A_2^{V} = -\nabla \cdot \boldsymbol \sigma^p,$
 \item Equation~\eqref{pressurefo} will be split so that $A_1^{p_p} = 0$ and $A_2^{p_p}=\alpha \nabla \cdot \boldsymbol V - \nabla \cdot (\kappa \nabla p_p ),$ and finally,
 \item Equation~\eqref{vfo} will be split so that $A_1^U = 0$ and $A_2^U = \boldsymbol V.$
 \end{itemize}

Using this approach, our system is decoupled into a fluid problem and the Biot problem. Furthermore, we not only split the coupled problem into two different domains, but we also treat different physical phenomena separately. 
Details of the loosely coupled scheme are given as follows.

\begin{itemize}
 \item \textbf{Step 1.}  Step 1 is a geometry problem which involves computation of a fluid domain and ALE velocity $\boldsymbol w:$
\begin{equation}
  \mathcal{A}_{t^n}(\hat{\boldsymbol x}) = \hat{\boldsymbol x} + \textrm{Ext} (\hat{\boldsymbol \eta}^n), \quad \Omega^f(t^{n}) = \mathcal{A}_{t^n}(\hat{\Omega}^f), \quad \boldsymbol w^n = d_{t} \boldsymbol x^n,
\end{equation}
where $\hat{\boldsymbol x} \in \hat{\Omega}, \boldsymbol x^{n} \in \Omega(t^n),$ and $\boldsymbol x^{n-1} \in \Omega(t^{n-1}).$
\item \textbf{Step 2.}  Step 2 involves solving the Navier-Stokes equations, and equation
 \begin{equation}
\rho_{m} r_m \frac{\partial (\boldsymbol v|_{\Gamma(t^n)} \cdot  \boldsymbol \tau)}{\partial t}  - \rho_{m} r_m  \frac{\partial \boldsymbol \tau}{\partial t}\cdot \boldsymbol \xi+ J \boldsymbol \tau \cdot \boldsymbol{\sigma}^f \boldsymbol{ n} = 0 \quad \textrm{on} \; \Gamma(t^n) \times (t^n, t^{n+1}). \label{step1cond}
 \end{equation}
 while time-derivatives of all the other functions are equal to zero. In particular, since the time derivative of the displacement of the elastic shell is equal to zero, the fluid domain is not changing in this step,
which implies
$$\frac{\partial \boldsymbol \tau}{\partial t} = 0 \quad \textrm{and} \; J=1 \quad \textrm{for} \; t \in (t^n, t^{n+1}).$$
Therefore, equation~\eqref{step1cond} can be seen as a Robin-type boundary condition for fluid velocity. Now, in the time-discrete framework, Step 2 reads as follows:
Find $\boldsymbol{v}^{n+1}$ and $p_f^{n+1},$ with $\boldsymbol v^n, p_f^n$ and $p_p^n$ obtained at the previous time step, such that
{\small{
\begin{subequations}
\begin{align}
& \rho_f d_t \boldsymbol{v}^{n+1} + (\boldsymbol{v}^{n+1}-\boldsymbol{w}^{n}) \cdot \nabla \boldsymbol{v}^{n+1}=\nabla \cdot \boldsymbol{\sigma}^f(\boldsymbol v^{n+1}, p_f^{n+1})+\boldsymbol g  & \textrm{in} \; \Omega^f(t^n), \\ 
& \nabla \cdot \boldsymbol{v}^{n+1}=0 & \textrm{in} \; \Omega^f(t^n), \\ 
& \boldsymbol \tau \cdot \boldsymbol{\sigma}^f(\boldsymbol v^{n+1}, p_f^{n+1}) \boldsymbol{ n} + \rho_{m} r_m d_t \boldsymbol v^{n+1} \cdot  \boldsymbol \tau = 0 & \textrm{on} \; \Gamma(t^n), \\
& \boldsymbol n \cdot \boldsymbol \sigma^f \boldsymbol {n}  = p^n_p  & \textrm{on} \; \Gamma(t^n). 
\end{align}\label{step1a}
\end{subequations}}}
with the following boundary conditions on $\Gamma^f_{\rm in}\cup\Gamma^f_{\rm out}\cup\Gamma^f_0$: 
\begin{equation*}
  \frac{\partial v^{n+1}_x}{\partial y} = \quad v^{n+1}_y = 0 \quad \textrm{on} \; \Gamma^f_0,
\quad 
  \boldsymbol{v}^{n+1}(0,R,t) = \boldsymbol{v}^{n+1}(L,R,t) = 0, 
\end{equation*}
\begin{equation*}
 \boldsymbol\sigma^f(\boldsymbol v^{n+1}, p_f^{n+1}) \boldsymbol{n} = -p_{in}(t)\boldsymbol{n}\  {\rm on}\ \Gamma^f_{\rm in}, \; \; \boldsymbol\sigma^f(\boldsymbol v^{n+1}, p_f^{n+1}) \boldsymbol {n} = 0   \ {\rm on}\ 
 \Gamma^f_{\rm out}.
\end{equation*}


\item \textbf{Step 3:}  Step 3 involves solving Biot problem together with equations
\begin{subequations}
\begin{align}
 & \rho_{m} r_m \frac{\partial (\boldsymbol v \cdot  \boldsymbol \tau)}{\partial t} + \boldsymbol \tau \cdot \mathcal{L} \boldsymbol \eta  - J \boldsymbol \tau \cdot \boldsymbol {\sigma}^p \boldsymbol n  = 0 & \textrm{on} \; \Gamma(t^n)\times(t^n, t^{n+1}), \label{dyn1step2} \\
& \rho_{m} r_m \displaystyle\frac{\partial \boldsymbol \xi}{\partial t}\cdot \boldsymbol n  +\boldsymbol n \cdot \mathcal{L} \boldsymbol \eta- J \alpha p_p - J\boldsymbol n \cdot \boldsymbol {\sigma}^p \boldsymbol n  =0 & \textrm{on} \; \Gamma(t^n)\times(t^n, t^{n+1}), \label{dyn2step2} 
\end{align}
\end{subequations}
and conditions
\begin{align}
& \alpha \frac{\partial \boldsymbol{\eta}}{\partial t} \cdot \boldsymbol n  =  \boldsymbol{v} \cdot \boldsymbol{n} + \kappa \nabla p_p  \cdot \boldsymbol n  & \textrm{on} \; \Gamma(t^n) \times(t^n, t^{n+1}), \label{nfluxstep2}\\ 
& \boldsymbol{\eta} \cdot  \boldsymbol{n}= \boldsymbol{U}\cdot  \boldsymbol{n},  \quad \boldsymbol{\eta} \cdot  \boldsymbol{\tau}= \boldsymbol{U}\cdot  \boldsymbol{\tau}, \quad  \boldsymbol{\xi} \cdot  \boldsymbol{\tau}= \boldsymbol{v} \cdot \boldsymbol{\tau} & \textrm{on} \; \Gamma(t^n) \times(t^n, t^{n+1}), 
\end{align}
while the fluid velocity in $\Omega^f(t^n)$ does not change in this step.
Since $\boldsymbol{\eta} \cdot  \boldsymbol{n}= \boldsymbol{U}|_{\Gamma(t^n)} \cdot  \boldsymbol{n}$ and $\quad \boldsymbol{\eta} \cdot  \boldsymbol{\tau}= \boldsymbol{U}|_{\Gamma(t^n)} \cdot  \boldsymbol{\tau}$, we have $\boldsymbol \eta = \boldsymbol U|_{\Gamma(t^n)}$ and $\boldsymbol \xi = \boldsymbol V|_{\Gamma(t^n)}$. Thus, we can rewrite conditions~\eqref{dyn1step2},~\eqref{dyn2step2}, and~\eqref{nfluxstep2} in the following way:
\begin{eqnarray}
& & \rho_{m} r_m \frac{\partial (\boldsymbol V \cdot  \boldsymbol \tau)}{\partial t} + \boldsymbol \tau \cdot \mathcal{L} \boldsymbol U  - J \boldsymbol \tau \cdot \boldsymbol {\sigma}^p \boldsymbol n  = 0 \quad \textrm{on} \; \Gamma(t^n)\times(t^n, t^{n+1}), \label{dyn1step2F} \\
& & \rho_{m} r_m \displaystyle\frac{\partial \boldsymbol V}{\partial t}\cdot \boldsymbol n  +\boldsymbol n \cdot \mathcal{L} \boldsymbol U- J \alpha p_p - J\boldsymbol n \cdot \boldsymbol {\sigma}^p \boldsymbol n  =0 \quad \textrm{on} \; \Gamma(t^n)\times(t^n, t^{n+1}), \label{dyn2step2F} \\
& & \alpha \frac{\partial \boldsymbol{U}}{\partial t} \cdot \boldsymbol n  =  \boldsymbol{v} \cdot \boldsymbol{n} + \kappa \nabla p_p  \cdot \boldsymbol n  \quad  \textrm{on} \; \Gamma(t^n) \times(t^n, t^{n+1}),
\end{eqnarray}
in which case they become Robin-type boundary conditions for the Biot system.

Finally, using the Newmark scheme for the elasticity equations, Step 3 reads as follows:
Find $\boldsymbol U^{n+1}, \boldsymbol V^{n+1}, \hat{\boldsymbol\eta}^{n+1}, \hat{\boldsymbol\xi}^{n+1}$ and $p_p^{n+1},$ with $\boldsymbol v^{n+1}$ computed in Step 1 and $\boldsymbol U^{n}, \boldsymbol V^{n}$ and $p_p^{n}$ computed in the previous time-step, such that $\boldsymbol \eta^{n+1} = \boldsymbol U|_{\Gamma(t^n)}^{n+1}, \boldsymbol \xi^{n+1} = \boldsymbol V|_{\Gamma(t^n)}^{n+1}$ and 
\begin{subequations}
{\small{
\begin{align}\label{step2}
& \rho_{p} d_t \boldsymbol V^{n+1} = \nabla \cdot \boldsymbol \sigma^p(\boldsymbol U^{n+1/2}, p_p^{n+1}) +\boldsymbol h & \textrm{in} \; \Omega^p(t^n),  \\ 
& s_0 d_t p_p^{n+1} + \alpha \nabla \cdot d_t \boldsymbol U^{n+1} - \nabla \cdot (\kappa \nabla p_p^{n+1})= s  & \textrm{in} \; \Omega^p(t^n),\\
& \rho_{p} (\boldsymbol V^{n+1/2}-d_t \boldsymbol U^{n+1})= 0 & \textrm{in} \; \Omega^p(t^n), \\ 
& J \boldsymbol \tau \cdot \boldsymbol {\sigma}^p(\boldsymbol U^{n+1}, p_p^{n+1}) \boldsymbol n = \rho_{m} r_m d_t \boldsymbol V^{n+1} \cdot  \boldsymbol \tau  + \boldsymbol \tau \cdot \mathcal{L} \boldsymbol U^{n+1/2} & \textrm{on} \; \Gamma(t^n), \label{S3bc1} \\
& J\boldsymbol n \cdot \boldsymbol {\sigma}^p(\boldsymbol U^{n+1}, p_p^{n+1}) \boldsymbol n = \rho_{m} r_m d_t \boldsymbol V^{n+1}\cdot \boldsymbol n +\boldsymbol n \cdot \mathcal{L} \boldsymbol U^{n+1/2} - J \alpha p_p^{n+1} & \textrm{on} \; \Gamma(t^n),  \label{S3bc2}\\
& \kappa \nabla p_p^{n+1} \cdot \boldsymbol n= \alpha d_t\boldsymbol{U}^{n+1} \cdot \boldsymbol n  -  \boldsymbol{v}^{n+1} \cdot \boldsymbol{n}    & \textrm{on} \; \Gamma(t^n) , 
 \end{align}}}
 \end{subequations}
with boundary conditions: 
\begin{equation*}
  \hat{\boldsymbol\eta}^{n+1}|_{\hat{x}=0,L}= 0, \quad p_p^{n+1} = 0 \quad \textrm{on} \; \Omega^p(t^n)\backslash \Gamma(t^n), \quad \boldsymbol U^{n+1}  = 0 \quad \textrm{on} \; \Gamma^p_{in}\cup\Gamma^p_{out}, 
\end{equation*}
$$ \boldsymbol {n}_{ext} \cdot \boldsymbol \sigma^E(\boldsymbol U^{n+1}) \boldsymbol {n}_{ext} =  -p_e \quad \textrm{on} \; \Gamma^p_{ext}, \quad  U_x^{n+1} = 0 \quad \textrm{on} \; \Gamma^p_{ext}.$$
Do $t^n=t^{n+1}$ and return to Step 1.

\vskip 0.1in
\noindent
{\bf Remark 2.}
In practice, the structure is usually handled in Lagrangian framework. Together with the hypothesis of ``small'' deformations we can assume the structure is linearly elastic, in which case 
we can easily recast Step 3 in the reference domain, where the boundary conditions~\eqref{S3bc1}-\eqref{S3bc2} simplify as follows:
\begin{align*}\label{step2linear}
& \boldsymbol e_1 \cdot \hat{\boldsymbol {\sigma}}^p(\hat{\boldsymbol U}^{n+1}, \hat{p}_p^{n+1}) \hat{\boldsymbol n} = \rho_{m} r_m d_t\hat{V}_x^{n+1}-C_2 \frac{\partial \hat U_y^{n+1/2}}{\partial \hat{x}} -C_1 \frac{\partial^2 \hat U_x^{n+1/2}}{\partial\hat {x}^2} & \textrm{on} \; \hat{\Gamma} \times(t^n, t^{n+1}), \\
& \boldsymbol e_2 \cdot \hat{\boldsymbol {\sigma}}^p(\hat{\boldsymbol U}^{n+1}, \hat{p}_p^{n+1}) \hat{\boldsymbol n} = \rho_{m} r_m d_t\hat{V}_y^{n+1} +C_0 \hat U_y^{n+1/2} +C_2 \frac{\partial \hat U_x^{n+1/2}}{\partial \hat{x}}- \alpha \hat{p}^{n+1}_p & \textrm{on} \; \hat{\Gamma} \times(t^n, t^{n+1}), 
 \end{align*}
where $\boldsymbol e_1$ and $\boldsymbol e_2$ are the Cartesian unit vectors.
\end{itemize}

The proposed scheme is an explicit loosely-coupled scheme where the first step consists of a fluid (Navier-Stokes) problem, and the second step consists of a poroelastic problem. 
Both sub-problems are solved with a Robin-type boundary conditions, which take into account thin-shell inertia and kinematic conditions implicitly. We note that the original monolithic problem becomes fully decoupled, and there are no sub-iterations needed between the two sub-problems. 

\vskip 0.1in
\noindent
{\bf Remark 3.}
One can apply additional splitting to Step 1 and Step 2 of the algorithm described above. Namely, 
the fluid problem described in Step 1 can be split into its viscous part (the Stokes equations for an incompressible fluid) and the pure advection part (incorporating the fluid and ALE advection simultaneously). The Biot system described in Step 2 can be split so the elastodynamics is treated separately from the pressure. 
For the details of possible Biot splitting strategies see~\cite{mikelic2012convergence} and the references therein.

\section{Weak formulation and stability}
In this section we write the variational formulation and prove the conditional stability of the loosely coupled scheme proposed in Section~2. For simplicity, we work out the analysis assuming that the displacement of the boundary is small enough and can be neglected. Under these
assumptions, domains $\Omega^f(t)$ and $\Omega^p(t)$ are fixed:
$$\Omega^f(t)=\hat{\Omega}^f, \quad \Omega^p(t)=\hat{\Omega}^p, \quad \forall t \in (0,T).$$
Although simplified, this problem still retains the main difficulties associated with the ``added-mass'' effect and the difficulties that partitioned schemes encounter when modeling fluid-porous medium coupling. Since from now on all the variable are defined on the fixed domain, we will drop the ``hat'' notation to avoid cumbersome expressions.

Let $t^n:=n \Delta t$ for $n = 1, \ldots, N,$ where $T=N \Delta t$ is the final time. Let the test function spaces $V^f, Q^f, V^p$ and $Q^p$ be defined as in~\eqref{Vf(t)},~\eqref{Q(t)},~\eqref{Vp}, and~\eqref{Qp} respectively. 

We introduce the following bilinear forms
\begin{eqnarray*}
 a_f(\boldsymbol v, \boldsymbol \varphi^f) &=& 2 \mu_f \int_{\Omega^f} \boldsymbol D(\boldsymbol v) : \boldsymbol D(\boldsymbol \varphi^f) d \boldsymbol x,  \label{af}\\
 b_f(p_f, \boldsymbol \varphi^f) &=&  \int_{\Omega^f} p_f \nabla \cdot \boldsymbol \varphi^f d \boldsymbol x, \label{bf} \\
 a_e(\boldsymbol U, \boldsymbol \varphi^p) &=& 2 \mu_p \int_{\Omega^p} \boldsymbol D(\boldsymbol U) : \boldsymbol D(\boldsymbol \varphi^p) d \boldsymbol x + \lambda_p \int_{\Omega^p} (\nabla \cdot \boldsymbol U)(\nabla \cdot \boldsymbol \varphi^p)  d \boldsymbol x,    \label{ae}\\
 a_p(p_p, \psi^p) &=&   \int_{\Omega^p} \kappa \nabla p_p \cdot \nabla \psi^p d \boldsymbol x, \label{ap} \\
 b_{ep}(p_p, \boldsymbol \varphi^p) &=& \alpha \int_{\Omega^p} p_p \nabla \cdot \boldsymbol \varphi^p d \boldsymbol x, \label{bp} \\
 a_m (\boldsymbol \eta, \boldsymbol \zeta) &=& -C_2 \int_0^L \frac{\partial \eta_y}{\partial x} \zeta_x dx +C_1 \int_0^L \frac{\partial \eta_x}{\partial x} \frac{\partial \zeta_x}{\partial x} dx +C_0 \int_0^L \eta_y \zeta_y dx + C_2 \int_0^L  \frac{\partial \eta_x}{\partial x} \zeta_y dx,   \label{ak}\\
 c_{fp}(p_p, \boldsymbol \varphi^f) &=& \int_{\Gamma} p_p \boldsymbol \varphi^f \cdot \boldsymbol n dx, \label{cfp} \\
 c_{ep}(p_p, \boldsymbol \varphi^p) &=& \alpha \int_{\Gamma} p_p \boldsymbol \varphi^p \cdot \boldsymbol n dx, \label{cep}
 \end{eqnarray*}
 and the trilinear form
 \begin{equation*}
   d_f(\boldsymbol v, \boldsymbol u, \boldsymbol \varphi) = \rho_f \int_{\Omega^f} (\boldsymbol v \cdot \nabla) \boldsymbol u \cdot \boldsymbol \varphi d \boldsymbol x. \label{df}
 \end{equation*}
\if 1=0
Now, the weak form of the loosely-coupled scheme presented in Section 2 is given as follows:
\begin{itemize}
 \item \textbf{Step 1.} Given $t^{n+1} \in (0,T], n = 0, \ldots, N-1,$ find $\boldsymbol{v} \in V^f$ and $p_f \in Q^f$ such that for all $(\boldsymbol \varphi^f,\psi^f) \in V^f \times Q^f$ and  $t\in (t^n, t^{n+1})$, with $p_p^n$ obtained at the previous time step:
\begin{equation}
\rho_f \int_{\Omega^f} \frac{\partial \boldsymbol v}{\partial t} \cdot \boldsymbol \varphi^f d\boldsymbol x+ a_f(\boldsymbol v, \boldsymbol \varphi^f)+\rho_{m}r_m \int_{\Gamma} \frac{\partial (\boldsymbol v \cdot \boldsymbol \tau)}{\partial t} \boldsymbol \varphi^f \cdot \boldsymbol \tau dx- b_f(p_f, \boldsymbol \varphi^f)+b_f(\psi^f, \boldsymbol v)+ c_{fp}(p^n_p, \boldsymbol \varphi^f) \nonumber
 \end{equation}
\begin{equation}
 = \int_{\Omega^f} \boldsymbol f^f\cdot \boldsymbol \varphi^f d\boldsymbol x
 + \int_0^R p_{in}(t) \varphi^f_x|_{x=0} dy. 
 \end{equation}
\item \textbf{Step 2.} Given $\boldsymbol v^{n+1}$ computed in Step 1, find $\boldsymbol U \in V^p, \boldsymbol V \in V^p, \boldsymbol\eta \in \hat{V}^m, \boldsymbol\xi \in \hat{V}^m,$ and $p_p \in Q^p,$ with $\boldsymbol\eta = \boldsymbol U|_{\Gamma}$ and $\boldsymbol\xi = \boldsymbol V|_{\Gamma}$, such that for all for all $(\boldsymbol \varphi^p,\boldsymbol \phi^p, \psi^p) \in V^p \times V^p \times Q^p$ and $t\in(t^n,t^{n+1}):$ 
 \begin{equation*}
\rho_{p} \int_{\Omega^p} \big(\boldsymbol V - \frac{\partial \boldsymbol U}{\partial t} \big) \cdot \boldsymbol \phi^p d \boldsymbol x +\rho_{p} \int_{\Omega^p} \frac{\partial \boldsymbol V}{\partial t}\cdot \boldsymbol \varphi^p d \boldsymbol x + a_e(\boldsymbol U, \boldsymbol \varphi^p)+ \int_{\Omega^p} s_0 \frac{\partial p_p}{\partial t} \psi^p d \boldsymbol x
\end{equation*}
\begin{equation*}
  +a_p(p_p, \psi^p)- b_{ep}(p_p, \boldsymbol \varphi^p) + b_{ep}(\psi_p, \frac{\partial \boldsymbol U}{\partial t})+\rho_{m} r_m \int_0^L \big(\boldsymbol V|_{\Gamma} - \frac{\partial (\boldsymbol U|_{\Gamma})}{\partial t} \big) \cdot \boldsymbol \phi^p|_{\Gamma} d \boldsymbol x
\end{equation*}
\begin{equation*}
 +\rho_{m} r_m \int_0^L \frac{\partial (\boldsymbol  V|_{\Gamma})}{\partial t}\cdot  \boldsymbol \varphi^p|_{\Gamma} dx + a_m (\boldsymbol U|_{\Gamma}, \boldsymbol \varphi^p|_{\Gamma})- c_{ep}(p_p, \boldsymbol \varphi^p)+ c_{ep}(\psi^p, \frac{\partial \boldsymbol U}{\partial t})
\end{equation*}
\begin{equation}
 - c_{fp}(\psi^p, \boldsymbol v^{n+1}) =  \int_{\Omega^p} \boldsymbol f^s \cdot \boldsymbol \varphi^p d \boldsymbol x+\int_{\Omega^p} s \psi d \boldsymbol x.
\end{equation}
\end{itemize}
To discretize the problem in time, we use the backward Euler scheme, and to discretize in space, we use the finite element method. Thus, we define the finite element spaces $V^f_h \subset V^f, Q^f_h \subset Q^f, V^p_h \subset V^p$ and $Q^p_h \subset Q^p$.  Finally, the fully discrete numerical scheme is given as follows:
\fi
To discretize the problem in space, we use the finite element method. Thus, we define the finite element spaces $V^f_h \subset V^f, Q^f_h \subset Q^f, V^p_h \subset V^p$ and $Q^p_h \subset Q^p$. The definition of these discrete spaces will be made precise at the beginning of Section~5. We assume that all the finite element initial conditions are equal to zero:
$$\boldsymbol v_h^0=0, \quad \boldsymbol U_h^0=0,\quad  \boldsymbol V_h^0=0, \quad \boldsymbol \eta_h^0=0, \quad \boldsymbol \xi_h^0=0,\quad  p_{p,h}^0=0.$$
Finally, the fully discrete numerical scheme is given as follows:
\begin{itemize}
 \item \textbf{Step 1.} Given $t^{n+1} \in (0,T], n = 0, \ldots, N-1,$ find $\boldsymbol{v}_h^{n+1} \in V_h^f$ and $p_{f,h}^{n+1} \in Q_h^f$ such that for all $(\boldsymbol \varphi_h^f,\psi_h^f) \in V_h^f \times Q_h^f$, with $p_{p,h}^n$ obtained at the previous time step:
\begin{equation}
\rho_f \int_{\Omega^f} d_t \boldsymbol v_h^{n+1} \cdot \boldsymbol \varphi_h^f d\boldsymbol x+ d_f(\boldsymbol v_h^{n+1}, \boldsymbol v_h^{n+1}, \boldsymbol \varphi^f_h)+ a_f(\boldsymbol v_h^{n+1}, \boldsymbol \varphi_h^f)+\rho_{m}r_m \int_{\Gamma} (d_t \boldsymbol v_h|_{\Gamma}^{n+1} \cdot \boldsymbol \tau) (\boldsymbol \varphi^f_h|_{\Gamma} \cdot \boldsymbol \tau) dx \nonumber
 \end{equation}
\begin{equation*}
- b_f(p^{n+1}_{f,h}, \boldsymbol \varphi_h^f) +b_f(\psi_h^f, \boldsymbol v_h^{n+1})+ c_{fp}(p^n_{p,h}, \boldsymbol \varphi_h^f) =\int_{\Omega^f} \boldsymbol f^{f}(t^{n+1})\cdot \boldsymbol \varphi^f_h d\boldsymbol x
 \end{equation*}
 \begin{equation}
 + \int_{\Gamma_{in}} p_{in}(t^{n+1}) \varphi^f_{x,h} dy. \label{S1discrete}
 \end{equation}
\item \textbf{Step 2.} Given $\boldsymbol v_h^{n+1}$ computed in Step 1, find $\boldsymbol U_h^{n+1} \in V_h^p, \boldsymbol V_h^{n+1} \in V_h^p, \boldsymbol\eta_h^{n+1} \in V_h^m, \boldsymbol\xi_h^{n+1} \in V_h^m$ and $p_{p,h}^{n+1} \in Q_h^p,$ with $\boldsymbol\eta_h^{n+1} = \boldsymbol U_h|_{\Gamma}^{n+1}$ and $\boldsymbol\xi_h^{n+1} = \boldsymbol V_h|_{\Gamma}^{n+1}$, such that for all $(\boldsymbol \varphi_h^p,\boldsymbol \phi_h^p, \psi_h^p) \in V_h^p \times V_h^p \times Q_h^p:$ 
 \begin{equation*}
\rho_{p} \int_{\Omega^p} (\boldsymbol V_h^{n+1/2} - d_t \boldsymbol U_h^{n+1} ) \cdot \boldsymbol \phi_h^p d \boldsymbol x +\rho_{p} \int_{\Omega^p} d_t \boldsymbol V_h^{n+1}\cdot \boldsymbol \varphi^p_h d \boldsymbol x + a_e(\boldsymbol U_h^{n+1/2}, \boldsymbol \varphi_h^p)+ \int_{\Omega^p} s_0 d_t p_{p,h}^{n+1} \psi^p_h d \boldsymbol x
\end{equation*}
\begin{equation*}
  +a_p(p^{n+1}_{p,h}, \psi^p_h)- b_{ep}(p^{n+1}_{p,h}, \boldsymbol \varphi^p_h)
  + b_{ep}(\psi_h^p, d_t \boldsymbol U_h^{n+1})+\rho_{m} r_m \int_0^L (\boldsymbol V_h|_{\Gamma}^{n+1/2} - d_t \boldsymbol U_h|_{\Gamma}^{n+1}) \cdot \boldsymbol \phi_h^p|_{\Gamma} d \boldsymbol x 
\end{equation*}
\begin{equation*}
+\rho_{m} r_m \int_0^L d_t \boldsymbol  V_h|_{\Gamma}^{n+1} \cdot  \boldsymbol \varphi_h^p|_{\Gamma} dx + a_m (\boldsymbol U_h|_{\Gamma}^{n+1/2}, \boldsymbol \varphi_h^p|_{\Gamma})- c_{ep}(p^{n+1}_{p,h}, \boldsymbol \varphi^p_h) + c_{ep}(\psi^p_h, d_t \boldsymbol U_h^{n+1})
\end{equation*}
\begin{equation}
- c_{fp}(\psi^p_h, \boldsymbol v_h^{n+1}) =  -\int_{\Gamma^p_{ext}} p_e \boldsymbol \varphi^p_{y,h} dx +\int_{\Omega^p} \boldsymbol f^s(t^{n+1}) \cdot \boldsymbol \varphi^p_h d \boldsymbol x 
  +\int_{\Omega^p} s(t^{n+1}) \psi^p_h d \boldsymbol x. \label{S2discrete}
\end{equation}

\end{itemize}


\subsection{Stability analysis}
To present our results in a more compact manner, in the analysis we study the Stokes equations instead of the Navier-Stokes equations. Handling the convective term in the Navier-Stokes equations can be done using classical approaches, see for example~\cite{temam2001navier,BorSunMulti}. 
%
%
Let us introduce the following time discrete norms:
 \begin{equation*}
  ||\boldsymbol \varphi||_{l^2(0,T; H^k(S))} = \bigg(\sum_{n=0}^{N-1} ||\boldsymbol \varphi^{n+1}||^2_{H^k(S)} \Delta t \bigg)^{1/2}, \quad ||\boldsymbol \varphi ||_{l^{\infty}(0,T; H^k(S))} = \max_{0 \le n \le N} ||\boldsymbol \varphi^n||_{H^k(S)},
 \end{equation*}
where $S \in \{\Omega^f, \Omega^p, (0,L)\}$.
Let $\mathcal{E}_f^n$ denote the discrete energy of the fluid problem, $\mathcal{E}_p^n$ denote the discrete energy of the Biot problem, and $\mathcal{E}_m^n$ denote the discrete energy of the Koiter membrane at time level $n$, defined respectively by
\begin{eqnarray}
 \mathcal{E}_f^n &=& \frac{\rho_f}{2} ||\boldsymbol v_h^n||^2_{L^2(\Omega^f)}+ \frac{\rho_{m} r_m}{2} ||\boldsymbol v_h^n \cdot \boldsymbol \tau||^2_{L^2(\Gamma)} \\
 \mathcal{E}_p^n &=& \frac{\rho_{p}}{2} ||\boldsymbol V_h^n||^2_{L^2(\Omega^p)} + \mu_p ||D(\boldsymbol U_h^n)||^2_{L^2(\Omega^p)} + \frac{\lambda_p}{2}||\nabla \cdot \boldsymbol U_h^n||^2_{L^2(\Omega^p)}\nonumber +\frac{s_0}{2} ||p_{p,h}^n||^2_{L^2(\Omega^p)},
\end{eqnarray}
 \begin{eqnarray}
& \mathcal{E}_m^n = \displaystyle\frac{\rho_{m} r_m}{2} ||\xi_x^n ||^2_{L^2(0,L)}+\displaystyle\frac{\rho_{m} r_m}{2} ||\xi_y^n ||^2_{L^2(0,L)} \nonumber \\
&  +\displaystyle r_m \left[
  4 \mu_m \bigg|\bigg|\frac{\hat\eta_y}{R}\bigg|\bigg|^2_{L^2(0,L)} +
  4 \mu_m \bigg|\bigg|\frac{\partial \hat\eta_x}{\partial \hat{x}} \bigg|\bigg|^2_{L^2(0,L)}
  +\frac{4 \mu_m \lambda_m}{\lambda_m+2\mu_m} \bigg|\bigg|\frac{\partial \hat\eta_x}{\partial \hat{x}}+ \frac{\hat\eta_y}{R} \bigg|\bigg|^2_{L^2(0,L)}
 \right].
 \nonumber
\end{eqnarray}
The stability of the loosely-coupled scheme~\eqref{S1discrete}-\eqref{S2discrete} is stated in the following result. The constants that appear in~\eqref{theorem:stability} are defined in the Appendix. 
\begin{theorem}
 Assume that the fluid-poroelastic system is isolated, i.e. $p_{in}=0, p_e=0, \boldsymbol f^f=0, \boldsymbol f^s=0$ and $s=0$. Let $\{(\boldsymbol v_h^n, p_{p,h}^n, \boldsymbol V_h^n, \boldsymbol U_h^n, \boldsymbol \xi_h^n, \boldsymbol \eta_h^n, p_{p,h}^n) \}_{0 \le n \le N}$ be the solution of~\eqref{S1discrete}-\eqref{S2discrete}.
Then, under the condition
\begin{equation}
 \bigg(2 \mu_f - \frac{C^2_K C_{TI} C_T^2 C_{PF} \Delta t}{ s_0 h} \bigg) \ge \gamma >0
\quad
\text{i.e.}
\quad
 \Delta t < \displaystyle\frac{2 \mu_f s_0 h}{C^2_K C_{TI} C^2_T C_{PF}}, \label{CFL}
\end{equation}
the following estimate holds:
\begin{equation*}
 \mathcal{E}_f^N+\mathcal{E}_p^N+\mathcal{E}_m^N   
 + \frac{\Delta t}{2} \rho_f ||d_t\boldsymbol v_h||^2_{l^2(0,T;L^2(\Omega^f))}
 + \frac{\Delta t}{2} \rho_{m} r_m ||d_t\boldsymbol v_h\cdot \boldsymbol \tau||^2_{l^2(0,T;L^2(\Gamma))}
 \end{equation*}
\begin{equation*}
+\gamma ||\boldsymbol v_h||^2_{l^2(0,T;H^1(\Omega^f))} + \delta \Delta t ||p_{f,h}||_{l^2(0,T;L^2(\Omega^f))}^2
 +\frac{\Delta t}{4} s_0 ||d_t p_{p,h}||^2_{l^2(0,T;L^2(\Omega^p))}
 + ||\sqrt{\kappa} p_{p,h}||^2_{l^2(0,T;H^1(\Omega^p))} 
\end{equation*}  
\begin{equation}
 \leq  \mathcal{E}_f^0 + \mathcal{E}_p^0  + \mathcal{E}_m^0.  \label{theorem:stability}
\end{equation}
 \end{theorem}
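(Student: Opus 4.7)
The strategy is to derive a discrete energy identity by simultaneously testing Step 1 and Step 2, to isolate the defect produced by the explicit fluid--Darcy coupling, and to absorb it using a trace--inverse inequality under the CFL constraint~\eqref{CFL}.

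\emph{Step A (testing).} In~\eqref{S1discrete} I would take $(\boldsymbol \varphi_h^f,\psi_h^f)=(\boldsymbol v_h^{n+1},p_{f,h}^{n+1})$, so the pressure--divergence pair $-b_f(p_{f,h}^{n+1},\boldsymbol v_h^{n+1})+b_f(p_{f,h}^{n+1},\boldsymbol v_h^{n+1})$ vanishes. In~\eqref{S2discrete} I would take $(\boldsymbol \varphi_h^p,\boldsymbol \phi_h^p,\psi_h^p)=(\boldsymbol V_h^{n+1/2},\boldsymbol 0,p_{p,h}^{n+1})$; the equation produced by testing with $\boldsymbol \phi_h^p$ enforces $\boldsymbol V_h^{n+1/2}=d_t\boldsymbol U_h^{n+1}$, an identity that makes both $-b_{ep}(p_{p,h}^{n+1},\boldsymbol V_h^{n+1/2})+b_{ep}(p_{p,h}^{n+1},d_t\boldsymbol U_h^{n+1})$ and $-c_{ep}(p_{p,h}^{n+1},\boldsymbol V_h^{n+1/2})+c_{ep}(p_{p,h}^{n+1},d_t\boldsymbol U_h^{n+1})$ cancel. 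The Korteweg-type coupling thus disappears and only the \emph{explicit} $c_{fp}$ terms survive.

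\emph{Step B (telescoping).} Using the identities $\Delta t\,(d_t a^{n+1})\cdot a^{n+1}=\tfrac12(\|a^{n+1}\|^2-\|a^n\|^2+\Delta t^2\|d_t a^{n+1}\|^2)$ and $\Delta t\,(d_t a^{n+1})\cdot a^{n+1/2}=\tfrac12(\|a^{n+1}\|^2-\|a^n\|^2)$, the symmetry of $a_e$, and the symmetric form~\eqref{KoiterWeakEnergy} of $a_m$, all inertial and elastic contributions collapse into telescoping increments of $\mathcal{E}_f^n$, $\mathcal{E}_p^n$ and $\mathcal{E}_m^n$. Summing over $n$ from $0$ to $N-1$, multiplying by $\Delta t$, and invoking zero initial data, everything assembles to the desired estimate except for the non-telescoping remainder
\begin{equation*}
\mathcal{R}=\sum_{n=0}^{N-1}\Delta t\,c_{fp}(p_{p,h}^{n}-p_{p,h}^{n+1},\boldsymbol v_h^{n+1})=-\Delta t^{2}\sum_{n=0}^{N-1}\int_{\Gamma}(d_{t}p_{p,h}^{n+1})\,\boldsymbol v_h^{n+1}\cdot\boldsymbol n\,dx,
\end{equation*}
which has to be estimated on the right-hand side.

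\emph{Step C (absorbing the defect).} On each summand of $\mathcal{R}$ I would apply Cauchy--Schwarz on $\Gamma$, then a discrete trace--inverse inequality on $d_tp_{p,h}^{n+1}\in Q_h^p$ (producing the factor $C_{TI}/h$), the continuous trace inequality together with Korn and Poincar\'e--Friedrichs on $\boldsymbol v_h^{n+1}\in V_h^f$ (producing $C_T^2C_K^2C_{PF}$), and a weighted Young inequality. Balancing the weight so that the $d_tp_{p,h}^{n+1}$ contribution is controlled by $\tfrac{s_0}{4}\|d_tp_{p,h}^{n+1}\|_{L^{2}(\Omega^p)}^{2}$ produces a residual coefficient in front of $\|\boldsymbol D(\boldsymbol v_h^{n+1})\|_{L^{2}(\Omega^f)}^{2}$ of exactly the form $2\mu_f-\tfrac{C_K^{2}C_{TI}C_T^{2}C_{PF}\Delta t}{s_0 h}$; the CFL condition~\eqref{CFL} is precisely what keeps this quantity at least $\gamma>0$.

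\emph{Step D (pressure term).} The bound $\delta\Delta t\|p_{f,h}\|_{l^{2}(0,T;L^{2}(\Omega^f))}^{2}$ does not come from the energy identity. It is obtained from the discrete inf-sup condition of $V_h^f\times Q_h^f$: for each $n$, choosing an inf-sup test velocity in~\eqref{S1discrete} expresses $\|p_{f,h}^{n+1}\|_{L^{2}(\Omega^f)}$ in terms of $\|d_t\boldsymbol v_h^{n+1}\|_{L^{2}(\Omega^f)}$, $\|\boldsymbol D(\boldsymbol v_h^{n+1})\|_{L^{2}(\Omega^f)}$, the Robin trace $\|d_t\boldsymbol v_h^{n+1}\cdot\boldsymbol\tau\|_{L^{2}(\Gamma)}$ and $\|p_{p,h}^{n}\|_{L^{2}(\Gamma)}$, each of which is already controlled by the left-hand side of~\eqref{theorem:stability} through the previously established terms.

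\emph{Main obstacle.} The only genuine difficulty is the remainder $\mathcal{R}$: because $p_{p,h}^{n}$ enters Step 1 while the fluid velocity is updated at $t^{n+1}$, the splitting incurs a consistency loss that, unlike classical added-mass stabilisations, cannot be removed by skew-symmetry. Controlling $\mathcal{R}$ is what forces the combined scaling $\Delta t\lesssim s_0 h/\mu_f$ in~\eqref{CFL}; balancing the two Young-type weights so that both the viscous dissipation of the fluid and the storage dissipation of the Darcy pressure remain strictly positive is the sole delicate step in the proof.
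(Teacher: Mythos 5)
Your proposal is correct and follows the paper's own proof essentially step for step: the same energy test functions (your choice $(\boldsymbol V_h^{n+1/2},\boldsymbol 0,p_{p,h}^{n+1})$, after enforcing $\boldsymbol V_h^{n+1/2}=d_t\boldsymbol U_h^{n+1}$, is equivalent to the paper's $(d_t\boldsymbol U_h^{n+1},d_t\boldsymbol V_h^{n+1},p_{p,h}^{n+1})$), the same identification of the explicit Stokes--Darcy defect $\Delta t\,c_{fp}(p_{p,h}^{n+1}-p_{p,h}^{n},\boldsymbol v_h^{n+1})$, the same trace-inverse/trace/Korn/Poincar\'e--Friedrichs/Young absorption calibrated against $\tfrac{s_0}{4}\|d_t p_{p,h}^{n+1}\|^2_{L^2(\Omega^p)}$ (which is exactly the paper's choice $\epsilon_1=\tfrac{s_0h}{2\Delta t C_{TI}}$ and the origin of~\eqref{CFL}), and the same \emph{inf-sup} recovery of the fluid pressure from~\eqref{S1discrete}. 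No gaps of substance.
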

 \begin{proof}
  To prove the energy estimate, we test the problem~\eqref{S1discrete} with $(\boldsymbol \varphi^f_h, \psi_h^f) = (\boldsymbol v^{n+1}_h, p_{f,h}^{n+1}),$ and problem~\eqref{S2discrete} with
  $(\boldsymbol \varphi^p_h, \boldsymbol \phi^p_h, \psi_h^p) = (d_t \boldsymbol U^{n+1}_h, d_t \boldsymbol V^{n+1}_h, p_{p,h}^{n+1}).$
  Then, after adding them together and multiplying by $\Delta t$   we get
\begin{equation*}\label{stab_fp_0}
 \mathcal{E}_f^{n+1}+\mathcal{E}_p^{n+1}+\mathcal{E}_m^{n+1}+ \frac{\rho_f}{2}
||\boldsymbol v_h^{n+1}-\boldsymbol v_h^n||^2_{L^2(\Omega^f)}  +2 \mu_f \Delta t ||D(\boldsymbol v_h^{n+1})||^2_{L^2(\Omega^f)}+ \frac{\rho_{m} r_m}{2}
||(\boldsymbol v_h^{n+1}-\boldsymbol v_h^n)\cdot \boldsymbol \tau||^2_{L^2(\Gamma)}
 \end{equation*}
\begin{equation*}
 +\frac{s_0}{2}||p_{p,h}^{n+1}-p_{p,h}^n||^2_{L^2(\Omega^p)} + \Delta t ||\sqrt{\kappa} \nabla p_{p,h}^{n+1}||^2_{L^2(\Omega^p)} 
\leq \Delta t c_{fp}(p_{p,h}^{n+1}, \boldsymbol v^{n+1}_h)-\Delta t c_{fp}(p_{p,h}^n, \boldsymbol v^{n+1}_h)+\mathcal{E}_f^n + \mathcal{E}_p^n+ \mathcal{E}_m^n.
\end{equation*}
The term $\Delta t c_{fp}(p_{p,h}^{n+1}-p_{p,h}^n, \boldsymbol v^{n+1}_h)$ arises in classical partitioned schemes for Navier Stokes/Stokes-Darcy coupling, and has been previously 
addressed in~\cite{layton2012long}. Following the similar approach as in~\cite{layton2012long}, we can estimate the interface term using Cauchy-Schwarz inequality~\eqref{CS}, Young's inequality~\eqref{youngs} (for $\epsilon_1 > 0$), and the local trace-inverse inequality~\eqref{traceInverse} in the following way:
\begin{equation*}
 \Delta t c_{fp}(p_{p,h}^{n+1}-p_{p,h}^n, \boldsymbol v^{n+1}_h) = \Delta t \int_{\Gamma}(p_{p,h}^{n+1}-p_{p,h}^n) \boldsymbol v^{n+1}_h \cdot \boldsymbol n dx
\end{equation*}
\begin{equation*}
 \leq \frac{\epsilon_1 \Delta t}{2} ||p_{p,h}^{n+1}-p_{p,h}^n||^2_{L^2(\Gamma)}+\frac{ \Delta t}{2 \epsilon_1} ||\boldsymbol v^{n+1}_h ||^2_{L^2(\Gamma)}
\end{equation*}
\begin{equation*}
 \leq \frac{\epsilon_1 \Delta t C_{TI}}{2 h} ||p_{p,h}^{n+1}-p_{p,h}^n||^2_{L^2(\Omega)}+\frac{ \Delta t}{2 \epsilon_1} ||\boldsymbol v^{n+1}_h ||^2_{L^2(\Gamma)}.
\end{equation*}
Finally, using trace inequality~\eqref{trace}, Poincar\'e-Friedrichs inequality~\eqref{PF}, and Korn's inequality~\eqref{korn}, we have
\begin{equation}\label{stab_fp_0_bis}
 \Delta t c_{fp}(p_{p,h}^{n+1}-p_{p,h}^n, \boldsymbol v^{n+1}_h) \leq \frac{\epsilon_1 \Delta t C_{TI}}{2 h} ||p_{p,h}^{n+1}-p_{p,h}^n||^2_{L^2(\Omega)}+\frac{ \Delta t C_T^2 C^2_K C_{PF}}{2 \epsilon_1} ||D(\boldsymbol v^{n+1}_h )||^2_{L^2(\Omega)}.
 \end{equation}
 In order to recover control on the pressure in the fluid domain, we exploit the \emph{inf-sup} stability of the approximation spaces $V_h^f$ and $Q_h^f$. 
Namely, spaces $V_h^f$ and $Q_h^f$ are~\emph{inf-sup} stable provided
\begin{equation}
 \inf_{p^{n+1}_{f,h}\in Q^f_h} \sup_{\boldsymbol \varphi^f \in V^f_h} \frac{b_f(p^{n+1}_{f,h},\boldsymbol \varphi_h^f)}{||\boldsymbol \varphi_h^f||_{H^1(\Omega^f)} ||p^{n+1}_{f,h}||_{L^2(\Omega^f)}} = \beta_f >0. \label{infsup}
\end{equation}
Combining the \emph{inf-sup} condition~\eqref{infsup} with \eqref{S1discrete} tested with $\psi_h^f=0$ we obtain,
\begin{equation}\label{stab_fp_1}
\beta_f \|p_{f,h}^{n+1}\|_{L^2(\Omega^f)} 
\leq \sup\limits_{\boldsymbol \varphi_h^f \in V_h^f}
\frac{\sum_{k=1,2}\mathcal{T}_k(\boldsymbol \varphi_h^f)}{\|\boldsymbol \varphi_h^f\|_{H^1(\Omega^f)}}
\end{equation}
where $\beta_f >0$ is a constant independent of the mesh characteristic size and $\mathcal{T}_k(\boldsymbol \varphi_h^f)$ is a shorthand notation for the following terms,
\begin{align*}
\mathcal{T}_1(\boldsymbol \varphi_h^f) &:=
\rho_f \int_{\Omega^f} d_t \boldsymbol v_h^{n+1} \cdot \boldsymbol \varphi_h^f d\boldsymbol x
+ \rho_{m}r_m \int_{\Gamma} (d_t \boldsymbol v_h^{n+1} \cdot \boldsymbol \tau) (\boldsymbol \varphi_h^f \cdot \boldsymbol \tau) d x,
\\
\mathcal{T}_2(\boldsymbol \varphi_h^f) &:=
a_f(\boldsymbol v_h^{n+1}, \boldsymbol \varphi_h^f)
+ c_{fp}(p_{p,h}^{n}, \boldsymbol \varphi_h^f).
\end{align*}
Exploiting Cauchy-Schwarz~\eqref{CS} and trace~\eqref{trace} inequalities, we obtain the following upper bounds,
\begin{equation*}
\sup\limits_{\boldsymbol \varphi_h^f \in V_h^f}
\frac{\mathcal{T}_1(\boldsymbol \varphi_h^f)}{\|\boldsymbol \varphi_h^f\|_{H^1(\Omega^f)}}
\leq C_T \Big(\rho_f \|d_t \boldsymbol v_h^{n+1}\|_{L^2(\Omega^f)}
+ \rho_{m}r_m \|d_t \boldsymbol v_h^{n+1}\|_{L^2(\Gamma)} \Big),
\end{equation*}
\begin{equation*}
\sup\limits_{\boldsymbol \varphi_h^f \in V_h^f}
\frac{\mathcal{T}_2(\boldsymbol \varphi_h^f)}{\|\boldsymbol \varphi_h^f\|_{H^1(\Omega^f)}}
\leq 2 \mu_f \|D(\boldsymbol v_h^{n+1})\|^2_{L^2(\Omega^f)}
+ C_T C_{PF} \kappa^{-1} \|\sqrt{\kappa} \nabla p_{p,h}^{n}\|_{L^2(\Omega^p)}^2.
\end{equation*}
Let us now multiply \eqref{stab_fp_1} as well as the bounds for $\mathcal{T}_k(\boldsymbol \varphi_h^f)$ by $\epsilon_2 \Delta t^2$ and combine the resulting inequality with \eqref{stab_fp_0} and \eqref{stab_fp_0_bis} to get,
\begin{equation*}
 \mathcal{E}_f^{n+1}+\mathcal{E}_p^{n+1}+\mathcal{E}_m^{n+1}
 + \frac{\Delta t^2}{2}(1 - \epsilon_2C_T^2) \rho_f||d_t \boldsymbol v_h^{n+1}||^2_{L^2(\Omega^f)}  
\end{equation*}
\begin{equation*}
 + \frac{\Delta t^2}{2} \rho_{m} r_m (1-\epsilon_2C_T^2) ||d_t \boldsymbol v_h^{n+1}\cdot \boldsymbol \tau||^2_{L^2(\Gamma)}
 + 2 \mu_f C_K (1 - (2\epsilon_1)^{-1} - \epsilon_2 \mu_f) \Delta t ||\boldsymbol v_h^{n+1}||^2_{H^1(\Omega^f)}
\end{equation*}
\begin{equation*}
 + \frac{\Delta t^2}{2} \left(s_0 - \frac{\epsilon_1 \Delta t C_{TI}}{2 h} \right)  ||d_t p_{p,h}^{n+1}||^2_{L^2(\Omega^p)} 
 + \epsilon_2 \beta_f \Delta t^2 \|p_{f,h}^{n+1}\|_{L^2(\Omega^f)}
\end{equation*}
\begin{equation*}\label{stab_fp_3}
\Delta t \|\sqrt{\kappa} \nabla p_{p,h}^{n+1}\|_{L^2(\Omega^p)}^2 
- \epsilon_2 \Delta t^2 \frac{C_T^2 C_{PF}^2}{\kappa^2} \|\sqrt{\kappa} \nabla p_{p,h}^{n}\|_{L^2(\Omega^p)}^2
\leq \mathcal{E}_f^n + \mathcal{E}_p^n+\mathcal{E}_m^{n}.
\end{equation*}
After summing up with respect to the time index $n$ we observe that
\begin{multline*}
\Delta t  \sum_{n=0}^{N-1} \Big[ \|\sqrt{\kappa} \nabla p_{p,h}^{n+1}\|_{L^2(\Omega^p)}^2 
- \epsilon_2 \Delta t^2 \frac{C_T^2 C_{PF}^2}{\kappa^2} \|\sqrt{\kappa} \nabla p_{p,h}^{n}\|_{L^2(\Omega^p)}^2 \Big]
\\
=
\Delta t \sum_{n=1}^{N-1}  \left(1-\epsilon_2 \Delta t \frac{C_T^2 C_{PF}^2}{\kappa^2} \right)\|\sqrt{\kappa} \nabla p_{p,h}^{n}\|_{L^2(\Omega^p)}^2 
+  \Delta t \|\sqrt{\kappa} \nabla p_{p,h}^{N}\|_{L^2(\Omega^p)}^2 .
\end{multline*}

 By setting $\epsilon_1 = \displaystyle\frac{s_0 h}{2 \Delta t C_{TI}}$,  
 and $\epsilon_2 = \frac12 \min \left( \frac{1}{\mu_f}, \Delta t \frac{\kappa^2}{C_T^2 C_{PF}^2} \right)$
 we prove the desired estimate.
 \end{proof}
 
 
\section{Error Analysis}
In this section, we analyze the convergence rate of the proposed method. For the spatial approximation, we apply Lagrangian finite elements of polynomial degree $k$ for all the variables, except for the fluid pressure, for which we use elements of degree $s<k$. 
We assume the regularity assumptions reported in Lemma~1 of the Appendix are satisfied and that our FEM spaces satisfy the usual approximation properties, as well as fluid velocity-pressure spaces  satisfy the discrete \emph{inf-sup} condition~\eqref{infsup}. 
 We will consider the fluid problem over the discretely divergence free velocity space 
\begin{equation}
X^f_h: = \{\boldsymbol v_h \in V^f_h | \; (\psi^f_h, \nabla \cdot \boldsymbol v_h)_{\Omega^f} = 0, \quad \textrm{for all} \; \psi^f_h \in Q^f_h \}.
\end{equation}
Let $S_h $ be an orthogonal projection operator with respect to $a_f(\cdot, \cdot)$, onto $X_h^f$, given by
\begin{align}
& a_f(\boldsymbol v - S_h \boldsymbol v, \boldsymbol \varphi^f_h) = 0 & \forall \boldsymbol \varphi^f_h \in  X^f_h,  \label{sh1}
\end{align}
$P_h$ be the Lagrangian interpolation operator onto $V_h^p$, and let $\Pi^{f/p}_h$ be 
the $L^2$-orthogonal projection onto $Q_h^{f/p}$, satisfying
\begin{equation}
(p_{r} - \Pi^{r}_h p_{r}, \psi_h) = 0, \quad \forall \psi_h \in Q^{r}_h, r \in \{f,p\}. \label{pi}
\end{equation}
Then, using piecewise polynomials of degree $k$, the finite element theory for Lagrangian and $L^2$ projections~\cite{ciarlet1978finite} gives the  classical approximation properties reported in Lemma \ref{approximation_properties}. 
Since $P_h$ is the Lagrangian interpolant, so is its trace on $\Gamma$. Therefore, we inherit optimal approximation properties also on this subset. We refer to Lemma \ref{approximation_properties} for a precise statement of these properties.

To analyze the error of our numerical scheme, we start by subtracting~\eqref{S1discrete}-\eqref{S2discrete} from the continuous problem. 
 We assume that the continuous fluid velocity lives in the space $\{\boldsymbol v \in V^f | \; \nabla \cdot \boldsymbol v = 0 \}$. 
This gives rise to the following error equations:
\begin{equation}
\rho_f \int_{\Omega^f}(d_t \boldsymbol v^{n+1} - d_t\boldsymbol v_h^{n+1}) \cdot \boldsymbol \varphi_h^f d\boldsymbol x+ a_f(\boldsymbol v^{n+1} -\boldsymbol v_h^{n+1}, \boldsymbol \varphi_h^f)- b_f(p_f^{n+1}, \boldsymbol \varphi_h^f) + c_{fp}(p_p^n-p^n_{p,h}, \boldsymbol \varphi_h^f)\nonumber
 \end{equation}
\begin{equation}
 +  \rho_{m}r_m \int_{\Gamma} (d_t \boldsymbol v^{n+1} \cdot \boldsymbol \tau - d_t \boldsymbol v_h^{n+1} \cdot \boldsymbol \tau) (\boldsymbol \varphi_h^f \cdot \boldsymbol \tau) d x = \mathcal{R}^{n+1}_f(\boldsymbol \varphi_h^f),\quad  \forall \boldsymbol \varphi_h^f \in X^f_h, 
 \end{equation}
 \begin{equation*}
\rho_{p} \int_{\Omega^p} (\boldsymbol V^{n+1/2} - \boldsymbol V_h^{n+1/2} -(d_t \boldsymbol U^{n+1}- d_t \boldsymbol U_h^{n+1} )) \cdot \boldsymbol \phi_h^p d \boldsymbol x +\rho_{p}\int_{\Omega^p} (d_t \boldsymbol V^{n+1} - d_t \boldsymbol V_h^{n+1} )\cdot \boldsymbol \varphi^p_h d \boldsymbol x
\end{equation*}
\begin{equation*}
 + a_e(\boldsymbol U^{n+1/2} - \boldsymbol U_h^{n+1/2}, \boldsymbol \varphi_h^p)
 + \int_{\Omega^p} s_0 ( d_t p_{p}^{n+1} -d_t p_{p,h}^{n+1}) \psi^p_h d \boldsymbol x  +a_p(p^{n+1}_{p}-p^{n+1}_{p,h}, \psi^p_h) 
\end{equation*}
\begin{equation*}
- b_{ep}(p^{n+1}_{p}-p^{n+1}_{p,h}, \boldsymbol \varphi^p_h)+ b_{ep}(\psi_h^p,d_t \boldsymbol U^{n+1}- d_t \boldsymbol U_h^{n+1})+\rho_{m} r_m \int_{\Gamma} (\boldsymbol V^{n+1/2}-\boldsymbol V_h^{n+1/2} )\cdot \boldsymbol \phi_h^p dx 
\end{equation*}
\begin{equation*}
-\rho_{m}r_m \int_{\Gamma} (d_t \boldsymbol U^{n+1} - d_t \boldsymbol U_h^{n+1}) \cdot \boldsymbol \phi_h^p d x +\rho_{m} r_m \int_{\Gamma} (d_t \boldsymbol  V^{n+1}-d_t \boldsymbol  V_h )\cdot  \boldsymbol \varphi_h^p dx 
\end{equation*}
\begin{equation*}
 + a_m (\boldsymbol U|_{\Gamma}^{n+1/2}-\boldsymbol U_h|_{\Gamma}^{n+1/2}, \boldsymbol \varphi_h^p|_{\Gamma})- c_{ep}(p^{n+1}_{p}-p^{n+1}_{p,h}, \boldsymbol \varphi^p_h)+ c_{ep}(\psi^p_h, d_t \boldsymbol U^{n+1} -d_t \boldsymbol U_h^{n+1})
\end{equation*}
\begin{equation}
 - c_{fp}(\psi^p_h, \boldsymbol v^{n+1}-\boldsymbol v_h^{n+1}) =  \mathcal{R}_s^{n+1}(\boldsymbol \varphi_h^p)+\mathcal{R}_v^{n+1}(\boldsymbol \phi_h^p)+\mathcal{R}_p^{n+1}(\psi_h^p), \quad \forall (\boldsymbol \varphi_h^p,\boldsymbol \phi_h^p, \psi_h^p) \in V^p_h\times V^p_h \times Q^p_h,
\end{equation}
where the consistency errors are given by
\begin{eqnarray}
 \mathcal{R}_f^{n+1}(\boldsymbol \varphi_h^f) &=& \rho_f \int_{\Omega^f}(d_t \boldsymbol v^{n+1} -\partial_t \boldsymbol v^{n+1}) \cdot \boldsymbol \varphi_h^f d \boldsymbol x +c_{fp} (p_p^n - p^{n+1}_p, \boldsymbol \varphi^f_h) \nonumber \\
 & &   +\rho_{m}r_m \int_{\Gamma} (d_t \boldsymbol v^{n+1} \cdot \boldsymbol \tau-\partial_t \boldsymbol v^{n+1}\cdot \boldsymbol \tau) (\boldsymbol \varphi_h^f \cdot \boldsymbol \tau) d x,  \nonumber\\
 \mathcal{R}_s^{n+1}(\boldsymbol \varphi_h^p) &=&  \rho_{p} \int_{\Omega^p}(d_t\boldsymbol V^{n+1}-\partial_t \boldsymbol V^{n+1}) \cdot \boldsymbol \varphi_h^p d \boldsymbol x+a_e(\frac{\boldsymbol U^{n}-\boldsymbol U^{n+1}}{2}, \boldsymbol \varphi^p_h) \nonumber \\
 & &  +\rho_{m} r_m \int_{\Gamma} (d_t \boldsymbol V^{n+1}-\partial_t \boldsymbol V^{n+1}) \cdot \boldsymbol \varphi_h^p d x+a_m(\frac{\boldsymbol U|_{\Gamma}^{n}-\boldsymbol U|_{\Gamma}^{n+1}}{2}, \boldsymbol \varphi^p_h|_{\Gamma}), \nonumber
 \end{eqnarray}
 \begin{eqnarray}
 \mathcal{R}_v^{n+1}(\boldsymbol \phi_h^p) &=& \rho_{p} \int_{\Omega^p}\frac{\boldsymbol V^{n}-\boldsymbol V^{n+1}}{2} \cdot \boldsymbol \phi_h^p d \boldsymbol x -\rho_{p} \int_{\Omega^p}(d_t \boldsymbol U^{n+1}-\partial_t \boldsymbol U^{n+1}) \cdot \boldsymbol \phi_h^p d \boldsymbol x \nonumber \\
 & &  +\rho_{m}r_m \int_{\Gamma}\frac{\boldsymbol V^{n}-\boldsymbol V^{n+1}}{2} \cdot \boldsymbol \phi_h^p d \boldsymbol x-\rho_{m}r_m \int_{\Gamma}(d_t \boldsymbol U^{n+1}-\partial_t \boldsymbol U^{n+1}) \cdot \boldsymbol \phi_h^p d \boldsymbol x, \nonumber \\
  \mathcal{R}_p^{n+1}(\psi_h^p) &=&  \int_{\Omega^p} s_0 (d_t p_p^{n+1} - \partial_t p_p^{n+1})\psi_h^p d\boldsymbol x + b_{ep}(\psi_h^p, d_t \boldsymbol U^{n+1}-\partial_t \boldsymbol U^{n+1}) \nonumber \\
  & & +c_{ep}(\psi_h^p, d_t \boldsymbol U^{n+1}-\partial_t \boldsymbol U^{n+1}).
 \end{eqnarray}
Let us split the error of the method into the approximation error $\theta_r$ and the truncation error $\delta_r$, with $r=f,fp,u,v,p$, as follows:
\begin{eqnarray*}
 e_f^{n+1} &=& \boldsymbol v^{n+1} - \boldsymbol v_h^{n+1} =  (\boldsymbol v^{n+1} -S_h \boldsymbol v^{n+1})+(S_h \boldsymbol v^{n+1}- \boldsymbol v_h^{n+1}) =: \theta_f^{n+1} + \delta_f^{n+1},  \\
  e_{fp}^{n+1} &=& p_f^{n+1} - p_{f,h}^{n+1} =  (p_f^{n+1} -\Pi^f_h p_f^{n+1})+(\Pi^f_h p_f^{n+1}- p_{f,h}^{n+1}) =: \theta_{fp}^{n+1} + \delta_{fp}^{n+1},  \\
 e_u^{n+1} &=& \boldsymbol U^{n+1} - \boldsymbol U_h^{n+1} =  (\boldsymbol U^{n+1} -P_h \boldsymbol U^{n+1})+(P_h \boldsymbol U^{n+1}- \boldsymbol U_h^{n+1}) =: \theta_u^{n+1} + \delta_u^{n+1}, \\
 e_v^{n+1} &=&  \boldsymbol V^{n+1} - \boldsymbol V_h^{n+1} =  (\boldsymbol V^{n+1} -P_h \boldsymbol V^{n+1})+(P_h \boldsymbol V^{n+1}- \boldsymbol V_h^{n+1}) =: \theta_v^{n+1} + \delta_v^{n+1}, \\
 e_p^{n+1} &=&  p_p^{n+1} - p_{p,h}^{n+1} = (p_p^{n+1}-\Pi^p_h p_p^{n+1})+(\Pi^p_h p_p^{n+1}-p_{p,h}^{n+1})=: \theta_p^{n+1} + \delta_p^{n+1}.
\end{eqnarray*}
Note that $\boldsymbol \eta^{n+1} - \boldsymbol \eta_h^{n+1}= \theta_u|_{\Gamma}^{n+1}+\delta_u|_{\Gamma}^{n+1}$ and $\boldsymbol \xi^{n+1} - \boldsymbol \xi_h^{n+1}= \theta_v|_{\Gamma}^{n+1}+\delta_v|_{\Gamma}^{n+1}$.

Our plan is to rearrange the terms in the error equations so that we have the truncation errors on the left hand side, and the consistency and interpolation errors on the right hand side.
After that, we will choose $\boldsymbol \varphi^f_h = \delta_f^{n+1},  \boldsymbol \varphi^p_h = d_t \delta_u^{n+1}, \boldsymbol \phi^p_h = d_t \delta_v^{n+1},$ and $\psi^p_h =  \delta_p^{n+1}$, and use the stability estimate for the truncation errors. Finally, we will
bound the remaining terms, and use the triangle inequality to get the error estimates for $ e_f, e_u, e_v,$ and $e_p$. 

Rearranging the terms in the error equations, and using properties~\eqref{sh1} and~\eqref{pi} of the projection operators $S_h$ and $\Pi_h$, respectively, we have
\begin{equation}
\rho_f \int_{\Omega^f}d_t \delta_f^{n+1} \cdot \boldsymbol \varphi_h^f d\boldsymbol x+ a_f(\delta_f^{n+1}, \boldsymbol \varphi_h^f)+ c_{fp}(\delta_p^n, \boldsymbol \varphi_h^f) +  \rho_{m}r_m \int_{\Gamma} (d_t \delta_f^{n+1} \cdot \boldsymbol \tau)(\boldsymbol \varphi_h^f \cdot \boldsymbol \tau) d x \nonumber
 \end{equation}
\begin{equation}
 = \mathcal{R}^{n+1}_f(\boldsymbol \varphi_h^f) 
+ b_f(p_f^{n+1}, \boldsymbol \varphi_h^f)
-\rho_f \int_{\Omega^f} d_t\theta_f^{n+1} \cdot \boldsymbol \varphi_h^f d\boldsymbol x
 -\rho_{m}r_m \int_{\Gamma} (d_t \theta_f^{n+1} \cdot \boldsymbol \tau (\boldsymbol \varphi_h^f \cdot \boldsymbol \tau) d x \quad \forall \boldsymbol \varphi_h^f \in X^f_h, \label{error_fluid}
 \end{equation}
 \begin{equation*}
\rho_{p} \int_{\Omega^p} (\delta_v^{n+1/2} - d_t \delta_u^{n+1}) \cdot \boldsymbol \phi_h^p d \boldsymbol x +\rho_{p}\int_{\Omega^p} d_t \delta_v^{n+1}\cdot \boldsymbol \varphi^p_h d \boldsymbol x + a_e(\delta_u^{n+1/2}, \boldsymbol \varphi_h^p) + \int_{\Omega^p} s_0 d_t \delta_{p}^{n+1} \psi^p_h d \boldsymbol x 
\end{equation*}
\begin{equation*}
 +a_p(\delta^{n+1}_{p}, \psi^p_h)- b_{ep}(\delta^{n+1}_{p}, \boldsymbol \varphi^p_h) + b_{ep}(\psi_h^p,d_t \delta_u^{n+1})
+\rho_{m} r_m \int_{\Gamma} (\delta_v^{n+1/2} -d_t \delta_u^{n+1})   \cdot \boldsymbol \phi_h^p d x
\end{equation*}
\begin{equation*}
 +\rho_{m} r_m \int_{\Gamma} d_t \delta_v^{n+1} \cdot  \boldsymbol \varphi_h^p dx  + a_m (\delta_u|_{\Gamma}^{n+1/2}, \boldsymbol \varphi_h^p|_{\Gamma})- c_{ep}(\delta^{n+1}_{p}, \boldsymbol \varphi^p_h)+ c_{ep}(\psi^p_h, d_t \delta_u^{n+1}) - c_{fp}(\psi^p_h, \delta_v^{n+1})
\end{equation*}
\begin{equation*}
 =  \mathcal{R}_s^{n+1}(\boldsymbol \varphi_h^p)+\mathcal{R}_v^{n+1}(\boldsymbol \phi_h^p)+\mathcal{R}_p^{n+1}(\psi_h^p) +\rho_{p}\int_{\Omega^p} d_t \theta_u^{n+1} \cdot \boldsymbol \phi_h^p d \boldsymbol x-\rho_{p}\int_{\Omega^p} \theta_v^{n+1/2} \cdot \boldsymbol \phi_h^p d \boldsymbol x
\end{equation*}
 \begin{equation*}
+\rho_{p}\int_{\Omega^p} d_t \theta_v^{n+1} \cdot \boldsymbol \varphi_h^p d \boldsymbol x-a_e(\theta_u^{n+1/2}, \boldsymbol \varphi_h^p) -a_p(\theta^{n+1}_{p}, \psi^p_h)+ b_{ep}(\theta^{n+1}_{p}, \boldsymbol \varphi^p_h) - b_{ep}(\psi_h^p,d_t \theta_u^{n+1})
\end{equation*}
\begin{equation*}
-\rho_{m} r_m \int_{\Gamma} (\theta_v^{n+1/2} -d_t \theta_u^{n+1})  \cdot \boldsymbol \phi_h^p d x-\rho_{m} r_m \int_{\Gamma} d_t \theta_v^{n+1}\cdot  \boldsymbol \varphi_h^p dx  - a_m (\theta_u|_{\Gamma}^{n+1/2}, \boldsymbol \varphi_h^p|_{\Gamma})
\end{equation*}
\begin{equation}
+ c_{ep}(\theta^{n+1}_{p}, \boldsymbol \varphi^p_h)-c_{ep}(\psi^p_h, d_t \theta_u^{n+1}) + c_{fp}(\psi^p_h, \theta_f^{n+1}), \quad \forall (\boldsymbol \varphi_h^p,\boldsymbol \phi_h^p, \psi_h^p) \in V^p_h\times V^p_h \times Q^p_h. \label{error_biot}
\end{equation}

Let $\mathcal{E}_{\delta}^n$ be defined as
\begin{equation*}
 \mathcal{E}_{\delta}^n = \frac{\rho_f}{2} ||\delta_f^{n}||^2_{L^2(\Omega^f)}+ \frac{\rho_{m} r_m}{2} ||\delta_f^{n} \cdot \boldsymbol \tau||^2_{L^2(\Gamma)} +  \frac{\rho_{p}}{2} ||\delta_v^{n}||^2_{L^2(\Omega^p)} + \mu_p ||E(\delta_u^{n})||^2_{L^2(\Omega^p)} 
\end{equation*}
 \begin{equation*}
 + \frac{\lambda_p}{2}||\nabla \cdot \delta_u^{n}||^2_{L^2(\Omega^p)}+\frac{s_0}{2} ||\delta_p^{n}||^2_{L^2(\Omega^p)}
 + \displaystyle\frac{\rho_{m} r_m}{2} ||\delta_{v}^{n}||^2_{L^2(\Gamma)}+\mathcal{M}(\delta_u|^{n}_{\Gamma}),
\end{equation*}
where
\begin{equation*}
\mathcal{M}(\boldsymbol \xi^n) =  \displaystyle\frac{r_m}{2} \left[
  4 \mu_m \bigg|\bigg|\frac{\hat\xi_y}{R}\bigg|\bigg|^2_{L^2(0,L)} +
  4 \mu_m \bigg|\bigg|\frac{\partial \hat\xi_x}{\partial \hat{x}} \bigg|\bigg|^2_{L^2(0,L)}
  +\frac{4 \mu_m \lambda_m}{\lambda_m+2\mu_m} \bigg|\bigg|\frac{\partial \hat\xi_x}{\partial \hat{x}}+ \frac{\hat\xi_y}{R} \bigg|\bigg|^2_{L^2(0,L)}
 \right].
\end{equation*}
Note that $\mathcal{E}_{\delta}^n$ corresponds to the total discrete energy of the scheme that appears in Theorem~1 in terms of the truncation error.

\begin{theorem}
Consider the solution $(\boldsymbol v_h, p_{p,h}, \boldsymbol V_h, \boldsymbol U_h, \boldsymbol \xi_h, \boldsymbol \eta_h, p_{p,h})$ of~\eqref{S1discrete}-\eqref{S2discrete}. Assume that the time step condition~\eqref{CFL} holds, and that
the true solution $(\boldsymbol v, p_{p}, \boldsymbol V, \boldsymbol U, \boldsymbol \xi, \boldsymbol \eta, p_{p})$ satisfies~\eqref{true_sol}. Then, the following estimate holds:
\begin{equation*}
||\boldsymbol v - \boldsymbol v_h ||^2_{l^\infty(0,T;L^2(\Omega^f)}+\Delta t||d_t(\boldsymbol v - \boldsymbol v_h)||^2_{l^2(0,T;L^2(\Omega^f)} +\Delta t||d_t(\boldsymbol v - \boldsymbol v_h)||^2_{l^2(0,T;L^2(\Gamma)} +\frac{\gamma}{2} ||\boldsymbol v - \boldsymbol v_h ||^2_{l^2(0,T;H^1(\Omega^f))} 
\end{equation*}
\begin{equation*}
+||\boldsymbol V - \boldsymbol V_h ||^2_{l^\infty(0,T;L^2(\Omega^p))}+||p_p - p_{p,h} ||^2_{l^\infty(0,T;L^2(\Omega^p))}+||p_p - p_{p,h} ||^2_{l^2(0,T;H^1(\Omega^p))}+||\boldsymbol \xi - \boldsymbol \xi_h ||^2_{l^\infty(0,T;L^2(0,L))}
  \end{equation*}
  \begin{equation*}
 +||\boldsymbol U - \boldsymbol U_h ||^2_{l^\infty(0,T;H^1(\Omega^p))} +||\boldsymbol \eta - \boldsymbol \eta_h ||^2_{l^\infty(0,T;H^1(0,L))} 
 +\Delta t ||p_f - p_{f,h} ||^2_{l^2(0,T;L^2(\Omega^f))}
  \le C h^{2k} \mathcal{B}_1(\boldsymbol v,\boldsymbol U, \boldsymbol \eta, p_p) 
\end{equation*}
\begin{equation*}
   + C h^{2k+2} \mathcal{B}_2(\boldsymbol U, \boldsymbol V, \boldsymbol \eta, \boldsymbol \xi, p_p) 
   + C \Delta t^2 \mathcal{B}_3(\boldsymbol v, \boldsymbol U, \boldsymbol V, \boldsymbol \eta, \boldsymbol \xi, p_p)
       + C h^{2s+2} ||p_f||^2_{l^2(0,T;H^{s+1}(\Omega^f))},
\end{equation*}
where 
\begin{equation*}
 \mathcal{B}_1(\boldsymbol v,\boldsymbol U, \boldsymbol \eta, p_p)= ||\boldsymbol v||^2_{l^2(0,T;H^{k+1}(\Omega^f))}+||p_p||^2_{l^2(0,T;H^{k+1}(\Omega^p))}  +||\partial_t \boldsymbol v||^2_{l^2(0,T;H^{k+1}(\Omega^f))}+ ||\partial_t \boldsymbol U||^2_{l^2(0,T;H^{k+1}(\Omega^p))}
  \end{equation*}
  \begin{equation*}
+||\partial_t p_p||^2_{l^2(0,T;H^{k+1}(\Omega^p))}+||\partial_{tt} \boldsymbol \eta||^2_{l^2(0,T;H^{k+1}(0,L))}+||p_p||^2_{l^\infty(0,T;H^{k+1}(\Omega^p))}+||\boldsymbol U||^2_{l^\infty(0,T;H^{k+1}(\Omega^p))}
\end{equation*}
\begin{equation*}
+||\boldsymbol \eta||^2_{l^\infty(0,T;H^{k+1}(0,L))},
\end{equation*}
\begin{equation*}
 \mathcal{B}_2(\boldsymbol U, \boldsymbol V, \boldsymbol \eta, \boldsymbol \xi, p_p)= ||\partial_t p_p||^2_{l^2(0,T;H^{k+1}(\Omega^p))}+||\partial_{tt} \boldsymbol U||^2_{l^2(0,T;H^{k+1}(\Omega^p))}+||\partial_{tt} \boldsymbol \eta||^2_{l^2(0,T;H^{k+1}(0,L))}
\end{equation*}
\begin{equation*}
+||\partial_{tt} \boldsymbol \xi||^2_{l^2(0,T;H^{k+1}(0,L))}+||p_p||^2_{l^\infty(0,T;H^{k+1}(\Omega^p))}+||\boldsymbol V||^2_{l^\infty(0,T;H^{k+1}(\Omega^p)} +||\boldsymbol \xi||^2_{l^\infty(0,T;H^{k+1}(0,L)}
\end{equation*}
\begin{equation*}
+ ||\partial_{t} \boldsymbol U||^2_{l^\infty(0,T;H^{k+1}(\Omega^p))}+ ||\partial_{t} \boldsymbol \eta||^2_{l^\infty(0,T;H^{k+1}(0,L))}+ ||\partial_{t} \boldsymbol V||^2_{l^\infty(0,T;H^{k+1}(\Omega^p))}
+ ||\partial_{t} \boldsymbol \xi||^2_{l^\infty(0,T;H^{k+1}(0,L))},
\end{equation*}
\begin{equation*}
 \mathcal{B}_3(\boldsymbol v, \boldsymbol U, \boldsymbol V, \boldsymbol \eta, \boldsymbol \xi, p_p) = ||\partial_{tt} \boldsymbol v||^2_{L^2(0,T;L^2(\Omega^f))}+||\partial_{tt} \boldsymbol \xi ||^2_{L^2(0,T;L^2(0,L))}+||\partial_{t} p_p||^2_{L^2(0,T;H^1(\Omega^p))}
\end{equation*}
\begin{equation*}
+||\partial_{tt} p_p||^2_{L^2(0,T;L^2(\Omega^p))}+||\partial_{tt} \boldsymbol U||^2_{L^2(0,T;H^1(\Omega^p))}+||\partial_{ttt} \boldsymbol U||^2_{L^2(0,T;L^2(\Omega^p))}+||\partial_{tt} \boldsymbol V||^2_{L^2(0,T;L^2(\Omega^p))}
\end{equation*}
\begin{equation*}
+||\partial_{ttt} \boldsymbol \eta||^2_{L^2(0,T;L^2(0,L))}+||\partial_{ttt} \boldsymbol V||^2_{L^2(0,T;L^2(\Omega^p))}+||\partial_{ttt} \boldsymbol \xi||^2_{L^2(0,T;L^2(0,L))}+||\partial_{tt} \boldsymbol \eta||^2_{L^2(0,T;H^1(0,L))}
\end{equation*}
\begin{equation*}
+||\partial_{tt} \boldsymbol U||_{l^\infty(0,T;L^2(\Omega^p))} +||\partial_{tt} \boldsymbol \eta||_{l^\infty(0,T;L^2(0,L))}+ ||\partial_{tt} \boldsymbol V||_{l^\infty(0,T;L^2(\Omega^p))}+||\partial_{tt} \boldsymbol \xi||_{l^\infty(0,T;L^2(0,L))}
\end{equation*}
\begin{equation*}
+||\partial_t \boldsymbol V||_{l^\infty(0,T;L^2(\Omega^p))} +||\partial_t \boldsymbol \xi||_{l^\infty(0,T;L^2(0,L)))}+||\partial_t \boldsymbol U||_{l^\infty(0,T;H^1(\Omega^p))}+||\partial_t \boldsymbol \eta||_{l^\infty(0,T;H^1(0,L))}.
\end{equation*} \label{theorem_errors}
\end{theorem}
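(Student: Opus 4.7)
The plan is to test the error equations~\eqref{error_fluid}--\eqref{error_biot} with the truncation errors themselves, so that the left-hand side reproduces the energy structure already controlled by Theorem~1, and then to bound the right-hand side by the approximation and consistency errors. Specifically, I would take $\boldsymbol \varphi_h^f = \delta_f^{n+1}$ in~\eqref{error_fluid} and $(\boldsymbol \varphi_h^p,\boldsymbol \phi_h^p,\psi_h^p) = (d_t \delta_u^{n+1},\, d_t \delta_v^{n+1},\, \delta_p^{n+1})$ in~\eqref{error_biot}, add the two, multiply by $\Delta t$, and sum over $n=0,\ldots,N-1$. Because the left-hand sides of the error equations have exactly the same algebraic structure as the discrete scheme itself, the same manipulation used in the proof of Theorem~1---in particular the treatment of the staggered interface term $\Delta t\, c_{fp}(\delta_p^{n+1}-\delta_p^n,\delta_f^{n+1})$ via Cauchy--Schwarz, the trace--inverse inequality and Young's inequality---yields $\mathcal{E}_\delta^N$ plus the usual dissipative norms on the left, provided the same CFL-type condition~\eqref{CFL} is imposed.

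Next, I would bound the right-hand side term by term. The consistency residuals $\mathcal{R}_f^{n+1},\mathcal{R}_s^{n+1},\mathcal{R}_v^{n+1},\mathcal{R}_p^{n+1}$ are handled by standard Taylor expansions, giving contributions of the form $\Delta t^2$ times $L^2(0,T;L^2)$ norms of second or third time derivatives of the continuous solution; these produce the $\mathcal{B}_3$ block. The terms involving the interpolation errors $\theta_f,\theta_u,\theta_v,\theta_p$ and their discrete time derivatives are controlled using the approximation properties of $S_h,P_h,\Pi_h^{f/p}$ (Lemma~1 of the Appendix)---namely $O(h^{k+1})$ in $L^2$ and $O(h^k)$ in $H^1$ for smooth-enough solutions---yielding the $\mathcal{B}_1$ and $\mathcal{B}_2$ contributions. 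The trace terms on $\Gamma$ are estimated via the trace inequality combined with the Lagrangian approximation property of $P_h$ on $\Gamma$. For the fluid pressure I would invoke the discrete \emph{inf-sup} condition~\eqref{infsup} applied to the error equation with $\psi_h^f=0$, exactly as in the stability proof.

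The crucial technical step is the same one that drove Theorem~1: the explicit coupling of the Stokes and Darcy sub-problems generates a ``lagged'' interface term $\Delta t\, c_{fp}(\delta_p^n,\delta_f^{n+1})$, which upon the splitting $\delta_p^n=\delta_p^{n+1}-(\delta_p^{n+1}-\delta_p^n)$ separates into a part that combines with the Biot testing to form the desired energy contribution, and an increment $\delta_p^{n+1}-\delta_p^n$ whose bound consumes the $\tfrac{s_0}{2}\|\delta_p^{n+1}-\delta_p^n\|_{L^2}^2$ dissipation available on the left; this is exactly where~\eqref{CFL} enters. A further nonzero right-hand side comes from $b_f(p_f^{n+1},\delta_f^{n+1})$, which does not vanish because $\delta_f^{n+1}$ lives in the discretely divergence-free space $X_h^f$ rather than being exactly divergence-free; replacing $p_f^{n+1}$ by $p_f^{n+1}-\Pi_h^f p_f^{n+1}$ and using~\eqref{pi} eliminates the interpolant and produces the $h^{2s+2}\|p_f\|^2_{H^{s+1}}$ contribution.

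After telescoping the energy differences (no discrete Gronwall is needed because the coefficients in front of $\|\delta\|^2$ are time-independent constants), using the zero initial data $\delta^0=0$, and finally applying the triangle inequality $\|e_\bullet\|\le\|\theta_\bullet\|+\|\delta_\bullet\|$ together with the interpolation estimates on $\theta_\bullet$, the stated bound follows. The main obstacle is expected to be the careful bookkeeping of the cross-interface consistency terms produced by the explicit Stokes--Darcy splitting: reproducing the stability mechanism of Theorem~1 while simultaneously carrying along the approximation and time-truncation residuals, and ensuring that every Koiter-shell and interface contribution on $\Gamma$ is either absorbed into the left-hand-side dissipation or bounded at the optimal order so that the overall rate matches~$O(h^k+h^{s+1}+\Delta t)$.
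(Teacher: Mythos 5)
Your overall strategy coincides with the paper's: test the error equations~\eqref{error_fluid}--\eqref{error_biot} with $(\delta_f^{n+1},\, d_t\delta_u^{n+1},\, d_t\delta_v^{n+1},\, \delta_p^{n+1})$, reuse the stability mechanism of Theorem~1 (including the treatment of the lagged interface term $c_{fp}(\delta_p^n,\delta_f^{n+1})$ under the condition~\eqref{CFL}), bound the consistency residuals by Taylor expansion at order $\Delta t^2$, eliminate $b_f(p_f^{n+1},\delta_f^{n+1})$ by inserting $\Pi_h^f p_f^{n+1}$ and using~\eqref{pi}, and recover the fluid pressure error through the \emph{inf-sup} condition~\eqref{infsup}. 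Up to that point your proposal reproduces the paper's Steps~1--4.

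The genuine gap is your parenthetical claim that ``no discrete Gronwall is needed because the coefficients in front of $\|\delta\|^2$ are time-independent constants.'' It is needed, and the paper invokes it twice: in Step~2, for the terms collected in $\mathcal{A}(\delta_f,\delta_p,\delta_v,\delta_u)$ of Lemma~\ref{consistency_error}, and again at the end of Step~3. The reason is structural. Your test functions in the Biot/membrane block are the discrete time derivatives $d_t\delta_u^{n+1}$ and $d_t\delta_v^{n+1}$, but the left-hand side produced by the stability argument contains no dissipative control of these quantities: Theorem~1 controls $\Delta t\,\|d_t\boldsymbol v_h\|^2_{l^2(0,T;L^2)}$ and $\Delta t\, s_0\|d_t p_{p,h}\|^2_{l^2(0,T;L^2)}$, but nothing of the kind for $\boldsymbol U$ or $\boldsymbol V$. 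Consequently the cross terms such as $\Delta t\sum_n a_e(\theta_u^{n+1/2},d_t\delta_u^{n+1})$, $\Delta t\sum_n b_{ep}(\theta_p^{n+1},d_t\delta_u^{n+1})$ and $\rho_p\Delta t\sum_n\int_{\Omega^p}d_t\theta_u^{n+1}\cdot d_t\delta_v^{n+1}\,d\boldsymbol x$ cannot be closed by Cauchy--Schwarz and Young alone, contrary to what your second paragraph suggests; they must be summed by parts in time (formula~\eqref{int_by_parts}), which moves $d_t$ onto the interpolation errors but leaves behind terms of the form $C\Delta t\sum_{n}\big(\|D(\delta_u^n)\|^2_{L^2(\Omega^p)}+\|\delta_v^n\|^2_{L^2(\Omega^p)}+\|\delta_v^n\|^2_{L^2(\Gamma)}+\mathcal{M}(\delta_u|_\Gamma^n)\big)$, i.e. essentially $C\Delta t\sum_n\mathcal{E}_\delta^n$, on the right-hand side with an $O(1)$ constant. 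An inequality of the form $\mathcal{E}_\delta^N\le C\Delta t\sum_{n<N}\mathcal{E}_\delta^n + \text{(approximation and consistency data)}$ cannot be resolved by telescoping, regardless of whether the constants are time-independent; the discrete Gronwall inequality is exactly the tool that closes it, and without it your argument does not terminate.
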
 
\begin{proof}
With the purpose of presenting the proof in a clear manner, we will separate the proof into four main steps.  

\vskip 0.08in
\noindent \textbf{Step 1: Application of the stability result~\eqref{theorem:stability} to the truncation error equation.}
Choose $\boldsymbol \varphi^f_h = \delta_f^{n+1}, \boldsymbol \varphi^p_h = d_t \delta_u^{n+1}, \boldsymbol \phi^p_h = d_t \delta_v^{n+1},$ and $\psi^p_h =  \delta_p^{n+1}$ in equations~\eqref{error_fluid} and~\eqref{error_biot}. Then, multiplying the equations by $\Delta t,$ adding them together, summing over $0 \le n \le N-1$, and using the stability estimate~\eqref{theorem:stability} for the truncation error, we get
\begin{equation*}
 \mathcal{E}_{\delta}^{N} + \frac{\rho_{f} \Delta t^2}{2} \sum_{n=0}^{N-1}
||d_t \delta_f^{n+1}||^2_{L^2(\Omega^f)} + \frac{\rho_{m} r_m \Delta t^2}{2} \sum_{n=0}^{N-1}
||d_t \delta_f^{n+1} \cdot \boldsymbol \tau||^2_{L^2(\Gamma)} +\gamma \Delta t \sum_{n=0}^{N-1} ||D(\delta_f^{n+1})||^2_{L^2(\Omega^f)}
 \end{equation*}
\begin{equation*}
+ \Delta t \sum_{n=0}^{N-1} ||\sqrt{\kappa} \nabla \delta_p^{n+1}||^2_{L^2(\Omega^p)} \le \mathcal{E}_{\delta}^0 + \Delta t \sum_{n=0}^{N-1} \bigg(\mathcal{R}^{n+1}_f(\delta_f^{n+1})+ \mathcal{R}_s^{n+1}(d_t \delta_u^{n+1})+\mathcal{R}_v^{n+1}(d_t \delta_v^{n+1})+ \mathcal{R}_p^{n+1}(\delta_p^{n+1})\bigg)
\end{equation*}
\begin{equation*}
+\Delta t \sum_{n=0}^{N-1} b_f(p_{f}^{n+1}, \delta_f^{n+1})-\rho_f \Delta t \sum_{n=0}^{N-1}  \int_{\Omega^f}d_t \theta_f^{n+1} \cdot \delta_f^{n+1} d\boldsymbol x -\Delta t \sum_{n=0}^{N-1} c_{fp}(\theta_p^n, \delta_f^{n+1})
\end{equation*}
 \begin{equation*}
 -\rho_{m}r_m \Delta t \sum_{n=0}^{N-1} \int_{\Gamma} (d_t \theta_f^{n+1} \cdot \boldsymbol \tau )(\delta_f^{n+1} \cdot \boldsymbol \tau) d x +\rho_{p} \Delta t \sum_{n=0}^{N-1} \int_{\Omega^p} d_t \theta_u^{n+1} \cdot d_t \delta_v^{n+1} d \boldsymbol x-\rho_{p} \Delta t \sum_{n=0}^{N-1} \int_{\Omega^p} \theta_v^{n+1/2} \cdot d_t \delta_v^{n+1} d \boldsymbol x
\end{equation*}
 \begin{equation*}
+\rho_{p} \Delta t \sum_{n=0}^{N-1} \int_{\Omega^p} d_t \theta_v^{n+1} \cdot d_t \delta_u^{n+1} d \boldsymbol x-\Delta t \sum_{n=0}^{N-1} a_e(\theta_u^{n+1/2}, d_t \delta_u^{n+1}) + \Delta t \sum_{n=0}^{N-1} b_{ep}(\theta^{n+1}_{p}, d_t \delta_u^{n+1})
\end{equation*}
\begin{equation*}
 -  \Delta t \sum_{n=0}^{N-1} b_{ep}(\delta_p^{n+1},d_t \theta_u^{n+1})-\rho_{m} r_m \sum_{n=0}^{N-1} \Delta t \int_{\Gamma} (\theta_v^{n+1/2}-d_t\theta_u^{n+1})  \cdot d_t \delta_v^{n+1} d x 
\end{equation*}
\begin{equation*}
-\rho_{m} r_m \Delta t \sum_{n=0}^{N-1}\int_{\Gamma} d_t \theta_v^{n+1}\cdot d_t \delta_u^{n+1} dx -\Delta t \sum_{n=0}^{N-1} a_p(\theta^{n+1}_{p}, \delta_p^{n+1})- \Delta t \sum_{n=0}^{N-1} a_m (\theta_u|_{\Gamma}^{n+1/2}, d_t \delta_u|_{\Gamma}^{n+1})
\end{equation*}
\begin{equation}
+ \Delta t \sum_{n=0}^{N-1}c_{ep}(\theta^{n+1}_{p}, d_t \delta_u^{n+1})  -\Delta t \sum_{n=0}^{N-1}c_{ep}(\delta_p^{n+1}, d_t \theta_u^{n+1}) + \Delta t \sum_{n=0}^{N-1} c_{fp}(\delta_p^{n+1}, \theta_f^{n+1}). \label{error_energy}
\end{equation}
The right hand side of~\eqref{error_energy} consists of consistency error terms $\mathcal{R}_f^{n+1}, \mathcal{R}_s^{n+1}, \mathcal{R}_v^{n+1}$ and $\mathcal{R}_p^{n+1}$, and mixed truncation and interpolation error terms. We will proceed by bounding the consistency error terms.

\vskip 0.08in
\noindent \textbf{Step 2: The consistency error estimate}. In this step we will use Lemma~\ref{consistency_error} of the Appendix to bound the consistency error terms. 
Referring to the formulation of Lemma~4 and in particular to the terms collected into the expression  $\mathcal{A}(\delta_f, \delta_p, \delta_v, \delta_u),$ we observe that the Gronwall Lemma is required to obtain an upper bound.

\vskip 0.08in
\noindent \textbf{Step 3: The mixed truncation and interpolation error terms estimate}. In this step we estimate the remaining terms of~\eqref{error_energy}, which are terms that contain both truncation and interpolation error. 

Using Cauchy-Schwartz~\eqref{CS}, Young's~\eqref{youngs}, Poincar\'e - Friedrichs~\eqref{PF}, and Korn's~\eqref{korn} inequalities, we have the following:
\begin{equation*}
 -\rho_f \Delta t \sum_{n=0}^{N-1}\int_{\Omega^f} d_t\theta_f^{n+1} \cdot \delta_f^{n+1} d\boldsymbol x \leq C \Delta t \sum_{n=0}^{N-1} ||\nabla d_t  \theta_f^{n+1}||^2_{L^2(\Omega^f)}+\frac{\gamma \Delta t}{8}\sum_{n=0}^{N-1}||D(\delta_f^{n+1})||^2_{L^2(\Omega^f)}. \label{est_b}
\end{equation*}
Furthermore, using Young's~\eqref{youngs}, Korn's~\eqref{korn}, and trace~\eqref{trace} inequalities we can estimate
\begin{equation*}
 - \Delta t \sum_{n=0}^{N-1} \bigg( a_p(\theta^{n+1}_{p}, \delta_p^{n+1})-  c_{fp}(\delta_p^{n+1}, \theta_f^{n+1}) + c_{fp}(\theta_p^n, \delta_f^{n+1}) \bigg)  \leq C \Delta t \sum_{n=0}^{N-1} ||D(\theta_f^{n+1})||^2_{L^2(\Omega^f)}
\end{equation*}
\begin{equation*}
+ \frac{\gamma \Delta t}{8} \sum_{n=0}^{N-1} ||D(\delta_f^{n+1})||^2_{L^2(\Omega^f)}+C \Delta t \sum_{n=0}^{N-1} ||\nabla \theta_p^{n+1}||^2_{L^2(\Omega^p)}+\frac{\Delta t}{8} \sum_{n=0}^{N-1} ||\sqrt{\kappa} \nabla \delta_p^{n+1}||^2_{L^2(\Omega^p)}.
\end{equation*}
In a similar way, we bound
\begin{equation*}
  -\rho_{m}r_m \Delta t \sum_{n=0}^{N-1}\int_{\Gamma} (d_t \theta_f^{n+1}\cdot \boldsymbol \tau) (\delta_f^{n+1} \cdot \boldsymbol \tau) d x \leq C \Delta t \sum_{n=0}^{N-1} ||\nabla d_t \theta_f^{n+1}||^2_{L^2(\Omega^f)}+\frac{\gamma \Delta t}{8} \sum_{n=0}^{N-1} ||D(\delta_f^{n+1})||^2_{L^2(\Omega^f)}.
\end{equation*}
The next two terms can be controlled as follows
\begin{equation*}
  - \Delta t \sum_{n=0}^{N-1} \bigg( b_{ep}(\delta_p^{n+1},d_t \theta_u^{n+1}) + c_{ep}(\delta_p^{n+1}, d_t \theta_u^{n+1}) \bigg)  \leq \frac{\Delta t}{8}\sum_{n=0}^{N-1}||\sqrt{\kappa} \nabla \delta_p^{n+1}||^2_{L^2(\Omega^p)}+C \Delta t \sum_{n=0}^{N-1}||\nabla d_t \theta_u^{n+1} ||^2_{L^2(\Omega^p)}.
\end{equation*}
To bound the pressure term, note that $b_f(p_f^{n+1}, \delta_f^{n+1}) = b_f(p_f^{n+1} - \Pi^f_{h} p_f^{n+1}, \delta_f^{n+1})$ since $\Pi^f_h p_f^{n+1} \in Q^f_h$ and $\delta_f^{n+1} \in X^f_h$. Hence, using notation $\theta_{fp}^{n+1} = p_f^{n+1} - \Pi^f_{h} p_f^{n+1}$, we have
\begin{equation*}
  \Delta t \sum_{n=0}^{N-1} b_f(\theta_{fp}^{n+1}, \delta_f^{n+1}) \leq C \Delta t \sum_{n=0}^{N-1} ||\theta_{fp}^{n+1}||^2_{L^2(\Omega^f)}+\frac{\gamma \Delta t}{8} \sum_{n=0}^{N-1} ||D(\delta_f^{n+1})||^2_{L^2(\Omega^f)}.
\end{equation*}
To estimate the remaining terms, we use discrete integration by parts in time.
Using equation~\eqref{int_by_parts}, we have
\begin{equation*}
 \Delta t \sum_{n=0}^{N-1} b_{ep}(\theta^{n+1}_{p}, d_t \delta_u^{n+1}) = \alpha \int_{\Omega_p} \theta_p^N \nabla \cdot \delta_u^{N} d \boldsymbol x - \alpha \Delta t \sum_{n=0}^{N-1} \int_{\Omega^p} d_t \theta_p^{n+1} \nabla \cdot \delta_u^n d \boldsymbol x \leq C_{\epsilon} ||\theta_p^N|| ^2_{L^2(\Omega^p)}
\end{equation*}
\begin{equation*}
+ \epsilon ||D(\delta_u^N)|| ^2_{L^2(\Omega^p)}+ C \Delta t \sum_{n=0}^{N-1} ||d_t \theta_p^{n+1}||^2_{L^2(\Omega^p)}+ C \Delta t \sum_{n=0}^{N-1} ||D(\delta_u^n)||^2_{L^2(\Omega^p)},
\end{equation*}
and
\begin{equation*}
 \Delta t \sum_{n=0}^{N-1} c_{ep}(\theta^{n+1}_{p}, d_t \delta_u^{n+1}) = \alpha \int_{\Gamma} \theta_p^N \delta_u^{N}\cdot \boldsymbol n dx - \alpha  \Delta t \sum_{n=0}^{N-1} \int_{\Gamma} d_t\theta_p^{n+1} \delta_u^n \cdot \boldsymbol n dx  \leq C_{\epsilon} ||\nabla \theta_p^N|| ^2_{L^2(\Omega^p)}
\end{equation*}
\begin{equation*}
+ \epsilon ||D(\delta_u^N)|| ^2_{L^2(\Omega^p)}+ C \Delta t \sum_{n=0}^{N-1} ||\nabla d_t \theta_p^{n+1}||^2_{L^2(\Omega^p)}+ C \Delta t \sum_{n=0}^{N-1} ||D(\delta_u^n)||^2_{L^2(\Omega^p)}.
\end{equation*}
Also,
\begin{equation*}
 \rho_{p} \Delta t \sum_{n=0}^{N-1} \int_{\Omega^p} d_t \theta_u^{n+1} \cdot d_t \delta_v^{n+1} d \boldsymbol x = \rho_{p}\int_{\Omega^p} d_t \theta_u^{N}\cdot \delta_v^N d\boldsymbol x-  \rho_{p} \Delta t \sum_{n=1}^{N-1}\int_{\Omega^p}  d_{tt}\theta_u^{n+1} \cdot \delta_v^{n} d \boldsymbol x
\end{equation*}
\begin{equation*}
\leq C_{\epsilon} ||d_t \theta_u^{N}||^2_{L^2(\Omega^p)} + \epsilon  ||\delta_v^N||^2_{L^2(\Omega^p)}+C \Delta t\sum_{n=1}^{N-1} ||d_{tt} \theta_u^{n+1}||^2_{L^2(\Omega^p)}+C \Delta t\sum_{n=1}^{N-1} ||\delta_v^n||^2_{L^2(\Omega^p)},
\end{equation*}
\begin{equation*}
- \rho_{p} \Delta t \sum_{n=0}^{N-1} \int_{\Omega^p} \theta_v^{n+1/2} \cdot d_t \delta_v^{n+1} d \boldsymbol x = -\rho_{p}\int_{\Omega^p} \theta_v^{N-1/2} \cdot \delta_u^N d\boldsymbol x+  \rho_{p} \Delta t \sum_{n=1}^{N-1}\int_{\Omega^p}  \frac{\theta_v^{n+1}-\theta_v^{n-1}}{2 \Delta t} \cdot \delta_v^{n} d \boldsymbol x
\end{equation*}
\begin{equation*}
 \le C_{\epsilon} (||\theta_v^N||^2_{L^2(\Omega^p)}+  ||\theta_v^{N-1}||^2_{L^2(\Omega^p)} )+ \epsilon  ||D(\delta_u^N)||^2_{L^2(\Omega^p)} +C \Delta t \sum_{n=1}^{N-1} \bigg|\bigg|\frac{\theta_v^{n+1}-\theta_v^{n-1}}{2 \Delta t}\bigg| \bigg|^2_{L^2(\Omega^p)}
\end{equation*}
\begin{equation*}
+ C\Delta t \sum_{n=1}^{N-1} ||D(\delta_u^n)||^2_{L^2(\Omega^p)},
\end{equation*}
and
\begin{equation*}
 \rho_{p} \Delta t \sum_{n=0}^{N-1} \int_{\Omega^p} d_t \theta_v^{n+1} \cdot d_t \delta_u^{n+1} d \boldsymbol x = \rho_{p}\int_{\Omega^p} d_t \theta_v^{N} \cdot \delta_u^N d\boldsymbol x-  \rho_{p} \Delta t \sum_{n=1}^{N-1}\int_{\Omega^p}  d_{tt}\theta_v^{n+1} \cdot \delta_u^{n} d \boldsymbol x
\end{equation*}
\begin{equation*}
\leq C_{\epsilon} ||d_t \theta_v^{N}||^2_{L^2(\Omega^p)} + \epsilon  ||D(\delta_u^N)||^2_{L^2(\Omega^p)}+C \Delta t\sum_{n=1}^{N-1} ||d_{tt} \theta_v^{n+1}||^2_{L^2(\Omega^p)}+C \Delta t\sum_{n=1}^{N-1} ||D(\delta_u^n)||^2_{L^2(\Omega^p)}.
\end{equation*}
In a similar way,
\begin{equation*}
\rho_{m}r_m \Delta t \sum_{n=0}^{N-1} \int_{\Gamma} d_t \theta_u^{n+1}  \cdot d_t \delta_v^{n+1} d x=\rho_{m}r_m\int_{\Gamma} d_t \theta_u^{N} \cdot  \delta_v^N d x -  \rho_{m}r_m \Delta t \sum_{n=1}^{N-1}\int_{\Gamma} d_{tt} \theta_u^{n+1} \cdot \delta_v^{n} d x
\end{equation*}
\begin{equation*}
 \leq C_{\epsilon} ||d_t \theta_u^{N}||^2_{L^2(\Gamma)} + \epsilon  ||\delta_v^N||^2_{L^2(\Gamma)}
 +C \Delta t \sum_{n=1}^{N-1} ||d_{tt}\theta_u^{n+1}||^2_{L^2(\Gamma)}+C \Delta t \sum_{n=1}^{N-1} ||\delta_v^n||^2_{L^2(\Gamma)},
\end{equation*}
and 
\begin{equation*}
-\rho_{m}r_m \Delta t \sum_{n=0}^{N-1} \int_{\Gamma} d_t \theta_v^{n+1} \cdot d_t \delta_u^{n+1} d x=-\rho_{m}r_m\int_{\Gamma} d_t \theta_v^{N}\cdot  \delta_u^N d x+  \rho_{m}r_m \Delta t \sum_{n=1}^{N-1}\int_{\Gamma}  d_{tt}\theta_v^{n+1} \cdot \delta_u^{n} d x 
\end{equation*}
\begin{equation*}
\leq C_{\epsilon} ||d_t \theta_v^{N}||^2_{L^2(\Gamma)} + \epsilon  ||D(\delta_u^N)||^2_{L^2(\Omega^p)}
 +C \Delta t \sum_{n=1}^{N-1} ||d_{tt} \theta_v^{n+1} ||^2_{L^2(\Gamma)}+C \Delta t \sum_{n=1}^{N-1} ||D(\delta_u^n)||^2_{L^2(\Omega^p)}.
\end{equation*}
Furthermore, 
\begin{equation*}
-\rho_{m} r_m \Delta t \sum_{n=0}^{N-1} \int_{\Gamma} \theta_v^{n+1/2} \cdot d_t \delta_v^{n+1} dx =-\rho_{m}r_m\int_{\Gamma} \theta_v^{N-1/2}\cdot \delta_u^N d x+  \rho_{m}r_m \Delta t \sum_{n=1}^{N-1}\int_{\Gamma}  \frac{\theta_v^{n+1}-\theta_v^{n-1}}{2 \Delta t} \cdot \delta_u^{n} d x
\end{equation*}
\begin{equation*}
 \leq C_{\epsilon} (||\theta_v^N||^2_{L^2(\Gamma)}+  ||\theta_v^{N-1}||^2_{L^2(\Gamma)} )+ \epsilon  ||D(\delta_u^N)||^2_{L^2(\Omega^p)} +C \Delta t \sum_{n=1}^{N-1} \bigg|\bigg|\frac{\theta_v^{n+1}-\theta_v^{n-1}}{2 \Delta t}\bigg| \bigg|^2_{L^2(\Gamma)}
+ C\Delta t \sum_{n=1}^{N-1} ||D(\delta_u^n)||^2_{L^2(\Omega^p)},
\end{equation*}
and
\begin{equation*}
-\Delta t \sum_{n=0}^{N-1} a_e(\theta_u^{n+1/2}, d_t \delta_u^{n+1}) = -a_e(\theta_u^{N-1/2}, \delta_u^N)+ \Delta t \sum_{n=1}^{N-1}a_e(\frac{\theta_u^{n+1}-\theta_u^{n-1}}{2 \Delta t}, \delta_u^{n})  \leq C_{\epsilon} (||D(\theta_u^N)||^2_{L^2(\Omega^p)}
\end{equation*}
\begin{equation*}
+||D(\theta_u^{N-1})||^2_{L^2(\Omega^p)})+\epsilon||D(\delta_u^N)||^2_{L^2(\Omega^p)}+C \Delta t \sum_{n=1}^{N-1} \bigg|\bigg|D\bigg(\frac{\theta_u^{n+1}-\theta_u^{n-1}}{2 \Delta t}\bigg)\bigg| \bigg|^2_{L^2(\Gamma)}+ C\Delta t \sum_{n=1}^{N-1} ||D(\delta_u^n)||^2_{L^2(\Omega^p)}.
\end{equation*}
Lastly,
\begin{equation*}
 - \Delta t \sum_{n=0}^{N-1} a_m (\theta_u|_{\Gamma}^{n+1/2}, d_t \delta_u|_{\Gamma}^{n+1})= -a_m(\theta_u|_{\Gamma}^{N-1/2}, \delta_u|_{\Gamma}^N)+\Delta t \sum_{n=1}^{N-1} a_m (\frac{\theta_u|_{\Gamma}^{n+1}-\theta_u|_{\Gamma}^{n-1}}{2 \Delta t}, \delta_u|_{\Gamma}^{n})
\end{equation*}
\begin{equation*}
 \leq C_{\epsilon} (\mathcal{M}(\theta_u|_{\Gamma}^N)+\mathcal{M}(\theta_u|_{\Gamma}^{N-1}))+\epsilon \mathcal{M}(\delta_u|_{\Gamma}^N)+C \Delta t \sum_{n=1}^{N-1} \mathcal{M}(\frac{\theta_u|_{\Gamma}^{n+1}-\theta_u|_{\Gamma}^{n-1}}{2 \Delta t})+C \Delta t \sum_{n=1}^{N-1} \mathcal{M}(\delta_u|^n_{\Gamma}).
\end{equation*}

Using the estimates from Steps 1-3, we have
\begin{equation*}
 \mathcal{E}_{\delta}^{N} + \frac{\rho_{f} \Delta t^2}{2} \sum_{n=0}^{N-1}
||d_t \delta_f^{n+1}||^2_{L^2(\Omega^f)} + \frac{\rho_{m} r_m \Delta t^2}{2} \sum_{n=0}^{N-1}
||d_t \delta_f^{n+1} \cdot \boldsymbol \tau||^2_{L^2(\Gamma)} +\frac{\gamma}{2} \Delta t \sum_{n=0}^{N-1} ||D(\delta_f^{n+1})||^2_{L^2(\Omega^f)}
\end{equation*}
\begin{equation*}
+ \frac{\Delta t}{2} \sum_{n=0}^{N-1} ||\sqrt{\kappa} \nabla \delta_p^{n+1}||^2_{L^2(\Omega^p)}
\le \epsilon \bigg(||D(\delta_u^N)|| ^2_{L^2(\Omega^p)}+||\nabla \cdot \delta_u^N||^2_{L^2(\Omega^p)}  +||\delta_v^N||^2_{L^2(\Omega^p)}+||\delta_v^N||^2_{L^2(\Gamma)}+\mathcal{M}(\delta_u|_{\Gamma}^N) \bigg)
\end{equation*}
\begin{equation*}
+ C \Delta t \sum_{n=0}^{N-1}\bigg(||D(\delta_u^n)||^2_{L^2(\Omega^p)}+||\nabla \cdot \delta_u^n||^2_{L^2(\Omega^p)}+ ||\delta_v^n||^2_{L^2(\Omega^p)}+ ||\delta_v^n||^2_{L^2(\Gamma)}+ \mathcal{M}(\delta_u|^n_{\Gamma}) \bigg)
\end{equation*}
 \begin{equation*}
+C \Delta t \sum_{n=0}^{N-1} \bigg(||D(\theta_f^{n+1})||^2_{L^2(\Omega^f)}+ ||\theta_{fp}^{n+1}||^2_{L^2(\Omega^f)}+||\nabla \theta_p^{n+1}||^2_{L^2(\Omega^p)}+ ||\nabla d_t \theta_f^{n+1}||^2_{L^2(\Omega^f)} +||\nabla d_t\theta_u^{n+1}||^2_{L^2(\Omega^p)}
\end{equation*}
\begin{equation*}
+||d_t\theta_p^{n+1}||^2_{L^2(\Omega^p)}+||\nabla d_t\theta_p^{n+1}||^2_{L^2(\Omega^p)}+ ||d_{tt}\theta_u^{n+1}||^2_{L^2(\Omega^p)}+||d_{tt}\theta_v^{n+1}||^2_{L^2(\Omega^p)}+ ||d_{tt} \theta_u^{n+1}||^2_{L^2(\Gamma)}+||d_{tt}\theta_v^{n+1}||^2_{L^2(\Gamma)}
\end{equation*}
\begin{equation*}
+ \bigg|\bigg|\frac{\theta_v^{n+1}-\theta_v^{n-1}}{2 \Delta t}\bigg| \bigg|^2_{L^2(\Omega^p)}+\bigg|\bigg|\frac{\theta_v^{n+1}-\theta_v^{n-1}}{2 \Delta t}\bigg| \bigg|^2_{L^2(\Gamma)} +\bigg|\bigg|D \bigg(\frac{\theta_u^{n+1}-\theta_u^{n-1}}{2 \Delta t} \bigg)\bigg| \bigg|^2_{L^2(\Omega^p)} + \mathcal{M}(\frac{\theta_u|_{\Gamma}^{n+1}-\theta_u|_{\Gamma}^{n-1}}{2 \Delta t})\bigg)
\end{equation*}
\begin{equation*}
+C\max_{0 \le n \le N}\bigg(||\theta_p^n|| ^2_{L^2(\Omega^p)}+||\nabla \theta_p^n|| ^2_{L^2(\Omega^p)}+||\theta_v^n||^2_{L^2(\Omega^p)}+ ||\theta_v^n||^2_{L^2(\Gamma)}+ ||D(\theta_u^{n})||^2_{L^2(\Omega^p)}
\end{equation*}
\begin{equation*}
+ \mathcal{M}(\theta_u|_{\Gamma}^n) + ||d_t \theta_u^{n}||^2_{L^2(\Omega^p)}+ ||d_t\theta_u^{n}||^2_{L^2(\Gamma)}+ ||d_t \theta_v^{n}||^2_{L^2(\Omega^p)} + ||d_t \theta_v^{n}||^2_{L^2(\Gamma)}\bigg)
\end{equation*}
\begin{equation*}
 + C \Delta t^2 \bigg( ||\partial_{tt} \boldsymbol v||^2_{L^2(0,T;L^2(\Omega^f))}+||\partial_{tt} \boldsymbol \xi ||^2_{L^2(0,T;L^2(0,L))}+||\partial_{t} p_p||^2_{L^2(0,T;H^1(\Omega^p))}+||\partial_{tt} p_p||^2_{L^2(0,T;L^2(\Omega^p))}
\end{equation*}
\begin{equation*}
+||\partial_{tt} \boldsymbol U||^2_{L^2(0,T;H^1(\Omega^p))}+||\partial_{ttt} \boldsymbol U||^2_{L^2(0,T;L^2(\Omega^p))}+||\partial_{tt} \boldsymbol V||^2_{L^2(0,T;L^2(\Omega^p))}+||\partial_{ttt} \boldsymbol \eta||^2_{L^2(0,T;L^2(0,L))}
\end{equation*}
\begin{equation*}
+||\partial_{ttt} \boldsymbol V||^2_{L^2(0,T;L^2(\Omega^p))}+||\partial_{ttt} \boldsymbol \xi||^2_{L^2(0,T;L^2(0,L))}+||\partial_{tt} \boldsymbol \eta||^2_{L^2(0,T;H^1(0,L))}+||\partial_{tt} \boldsymbol U||_{l^\infty(0,T;L^2(\Omega^p))}
\end{equation*}
\begin{equation*}
 +||\partial_{tt} \boldsymbol \eta||_{l^\infty(0,T;L^2(0,L))}+ ||\partial_{tt} \boldsymbol V||_{l^\infty(0,T;L^2(\Omega^p))}+||\partial_{tt} \boldsymbol \xi||_{l^\infty(0,T;L^2(0,L))}+||\partial_t \boldsymbol V||_{l^\infty(0,T;L^2(\Omega^p))}
\end{equation*}
\begin{equation*}
+ ||\partial_t \boldsymbol \xi||_{l^\infty(0,T;L^2(0,L))}+||\partial_t \boldsymbol U||_{l^\infty(0,T;H^1(\Omega^p))}+||\partial_t \boldsymbol \eta||_{l^\infty(0,T;H^1(0,L))}\bigg),
\end{equation*}
where $\epsilon>0$ can be taken small enough.
Finally, using using Lemma~\ref{lemma_interpolation}, approximation properties~\eqref{ap1}-\eqref{ap2}, triangle inequality, and the discrete Gronwall inequality, we prove the desired estimate, except for the pressure error in the fluid domain.

\vskip 0.08in
\noindent \textbf{Step 4: analysis of the fluid pressure error.}
To control this part of the error we proceed as for the stability estimate.
More precisely, we start from \eqref{error_fluid} and we rearrange it as follows
\begin{align}
\nonumber
&b_f(\delta_{fp}^{n+1}, \boldsymbol \varphi_h^f) = \rho_f \int_{\Omega^f}d_t \delta_f^{n+1} \cdot \boldsymbol \varphi_h^f d\boldsymbol x
+ \rho_{m}r_m \int_{\Gamma} (d_t \delta_f^{n+1} \cdot \boldsymbol \tau)(\boldsymbol \varphi_h^f \cdot \boldsymbol \tau) d x
+ \rho_f \int_{\Omega^f} d_t\theta_f^{n+1} \cdot \boldsymbol \varphi_h^f d\boldsymbol x
\\
\label{error_fp_1}
+& \rho_{m}r_m \int_{\Gamma} (d_t \theta_f^{n+1} \cdot \boldsymbol \tau )(\boldsymbol \varphi_h^f \cdot \boldsymbol \tau) d x,
+ a_f(e_f^{n+1}, \boldsymbol \varphi_h^f)
+ c_{fp}(e_p^n, \boldsymbol \varphi_h^f) 
+ b_f(\theta_{fp}^{n+1}, \boldsymbol \varphi_h^f)
- \mathcal{R}^{n+1}_f(\boldsymbol \varphi_h^f).
\end{align}
For simplicity of notation, let us group the terms on the right hand side of the previous equation,
\begin{multline*}
\mathcal{T}(\boldsymbol \varphi_h^f) :=
\rho_f \int_{\Omega^f} d_t e_f^{n+1} \cdot \boldsymbol \varphi_h^f d\boldsymbol x
+ \rho_{m}r_m \int_{\Gamma} (d_t e_f^{n+1} \cdot \boldsymbol \tau) (\boldsymbol \varphi_h^f \cdot \boldsymbol \tau) d x
\\
+ a_f(e_f^{n+1}, \boldsymbol \varphi_h^f)
+ c_{fp}(e_p^n, \boldsymbol \varphi_h^f) 
+ b_f(\theta_{fp}^{n+1}, \boldsymbol \varphi_h^f)
\end{multline*}
Owing to the \emph{inf-sup} condition~\eqref{infsup} between spaces $V_h^f$ and $Q_h^f$ there exists a positive constant $\beta_f$
independent of the mesh characteristic size such that,
\begin{equation}\label{error_fp_2}
\beta_f \|\delta_{fp}^{n+1}\|_{L^2(\Omega^f)} 
\leq \sup\limits_{\boldsymbol \varphi_h^f \in V_h^f}
\frac{\mathcal{T}(\boldsymbol \varphi_h^f)- \mathcal{R}^{n+1}_f(\boldsymbol \varphi_h^f)}{\|\boldsymbol \varphi_h^f\|_{H^1(\Omega^f)}}.
\end{equation}
Moving along the lines of the stability estimate, the following upper bounds for the right hand side of \eqref{error_fp_2} hold true, with a generic constant $C$ which depends on the trace~\eqref{trace}, Korn~\eqref{korn} and Poincar\'e-Friedrichs~\eqref{PF} inequalities, as well as on the parameters of the problem,
\begin{equation*}
\sup\limits_{\boldsymbol \varphi_h^f \in V_h^f}
\frac{\mathcal{T}(\boldsymbol \varphi_h^f)}{\|\boldsymbol \varphi_h^f\|_{H^1(\Omega^f)}}
\leq C \Big(\|e_f^{n+1}\|_{H^1(\Omega^f)} + \|e_p^{n}\|_{H^1(\Omega^p)} + \|\theta_{fp}^{n+1}\|_{L^2(\Omega^f)}
+ \|d_t e_f^{n+1}\|_{L^2(\Omega^f)} + \|d_t e_f^{n+1}\|_{L^2(\Gamma)}
\Big).
\end{equation*}
Using the bounds that will be detailed in Lemma~\ref{consistency_error} of the Appendix, we get
\begin{equation*}
\sup\limits_{\boldsymbol \varphi_h^f \in V_h^f}
\frac{\mathcal{R}^{n+1}_f(\boldsymbol \varphi_h^f)}{\|\boldsymbol \varphi_h^f\|_{H^1(\Omega^f)}}
\leq C\Big( \|(d_t-\partial_t) \boldsymbol v^{n+1}\|_{L^2(\Omega^f)}
+\|(d_t-\partial_t) \boldsymbol v^{n+1}\cdot \boldsymbol \tau\|_{L^2(\Gamma)}
+\|\nabla(p_p^{n+1}-p_p^n)\|_{L^2(\Omega^p)} \Big).
\end{equation*}
Finally, we replace the previous estimates into \eqref{error_fp_2}, square all terms, sum up with respect to $n$ and multiply by $\Delta t^2$. There exists a positive constant $c$ small enough such that
\begin{multline*}
c \Delta t^2 \sum_{n=0}^{N-1} \|\delta_{fp}^{n+1}\|_{L^2(\Omega^f)}^2 
\leq \Delta t^2 \sum_{n=0}^{N-1} \Big(\|d_t e_f^{n+1}\|_{L^2(\Omega^f)}^2 + \|d_t e_f^{n+1}\|_{L^2(\Gamma)}^2
+ \|e_f^{n+1}\|_{H^1(\Omega^f)}^2 + \|e_p^{n}\|_{H^1(\Omega^p)}^2
\\
+ \|\theta_{fp}^{n+1}\|_{L^2(\Omega^f)}^2 + \|d_t \boldsymbol v^{n+1}-\partial_t \boldsymbol v^{n+1}\|_{L^2(\Omega^f)}^2
+\|d_t \boldsymbol v^{n+1}\cdot \boldsymbol \tau-\partial_t \boldsymbol v^{n+1} \cdot \boldsymbol \tau \|_{L^2(\Gamma)}^2
+\|\nabla(p_p^{n+1}-p_p^n)\|_{L^2(\Omega^p)}^2 \Big).
\end{multline*}
To conclude, combining the triangle inequality with the approximation properties of the discrete pressure space and bounding the right hand side with the available error estimates, we obtain
\begin{multline*}
\Delta t ||p_f - p_{f,h} ||^2_{l^2(0,T;L^2(\Omega^f)} 
\leq C \Big( C h^{2k} \mathcal{B}_1(\boldsymbol v,\boldsymbol U, \boldsymbol \eta, p_p)
\\ + Ch^{2k+2} \mathcal{B}_2(\boldsymbol v, \boldsymbol U, \boldsymbol V, \boldsymbol \eta, \boldsymbol \xi, p_p) 
+ C \Delta t^2 \mathcal{B}_3(\boldsymbol v, \boldsymbol U, \boldsymbol V, \boldsymbol \eta, \boldsymbol \xi, p_p)+C h^{2s+2} ||p_f||^2_{l^2(0,T;H^{s+1}(\Omega^f))}\Big).
\end{multline*}

\end{proof}


\section{Numerical results}\label{sec:NumR}
The focus of this section is on verification of the results presented in this work and exploration of poroelastic effects in the model.
We test the scheme on a classical benchmark problem used for convergence studies of fluid-structure iteration problems~\cite{formaggia2001coupling,bukavc2012fluid,badia2008fluid,burman2009stabilization,barker2010scalable}.
In Example 1, we present the convergence of our scheme in space and time. Furthermore, we validate the necessity of the stability condition~\eqref{CFL}.

In Example~2 we analyze the role of poroelastic effects in blood flow. In particular, we compare our results to the ones obtained using a purely elastic model in Example 2. 
We distinguish a high permeability and a high storativity case, and present a comparison between the two cases and the purely elastic model.

\subsection{Example 1.}
We consider the classical test problem  used in several works~\cite{formaggia2001coupling,bukavc2012fluid,badia2008fluid,burman2009stabilization}
as a benchmark problem for testing the results of fluid-structure interaction algorithms for blood flow.
In our case, the flow is driven by the time-dependent pressure data:
\begin{equation}\label{pressure}
 p_{in}(t) = \left\{\begin{array}{l@{\ } l} 
\frac{p_{max}}{2} (1 - \cos(\frac{2 \pi t}{T_{max}})) & \; \textrm{if} \; t \le T_{max} \\
0   & \; \textrm{if} \; t>T_{max},
 \end{array} \right.
\end{equation}
where $p_{max} = 1.3334 \; dyne/cm^2$ and $T_{max}=0.003 \; s$. For the elastic skeleton, we consider the following equation of linear elasticity:
$$ \rho_{p} \frac{D^2 \boldsymbol U}{D t^2} + \beta \boldsymbol U- \nabla \cdot \boldsymbol \sigma^p = 0. $$ The additional term $\beta \boldsymbol U$ 
comes from the axially symmetric formulation, accounting for the recoil due to the circumferential strain. Namely, it acts like a spring term, keeping the top and
bottom structure displacements connected in 2D, see, e.g.,~\cite{badia2008splitting,barker2010scalable,ma1992numerical}. The values of the parameters used in this example are given in Table~\ref{T1}.
{{
\begin{center}
\begin{table}[ht!]
{\small{
\begin{tabular}{|l l l l |}
\hline
\textbf{Parameters} & \textbf{Values} & \textbf{Parameters} & \textbf{Values}  \\
\hline
\hline
\textbf{Radius} $R$ (cm)  & $0.5$  & \textbf{Length} $L$ (cm) & $6$  \\
\hline
\textbf{Membrane thickness} $r_m $(cm) & $0.02$  & \textbf{Poroelastic wall thickness} $r_p$ (cm) & $0.1$  \\
\hline
\textbf{Membrane density} $\rho_{m} $(g/cm$^3$) & $1.1$  & \textbf{Poroelastic wall density} $\rho_{p} $(g/cm$^3$) & $1.1$  \\
\hline
\textbf{Fluid density} $\rho_f$ (g/cm$^3$)& $1$ &\textbf{Dyn. viscosity} $\mu$ (g/cm s) & $0.035$    \\
\hline
\textbf{Lam\'e coeff.} $\mu_m $(dyne/cm$^2$) & $1.07 \times 10^6$  & \textbf{Lam\'e coeff.} $\lambda_m $(dyne/cm$^2$) & $4.28 \times 10^6$ \\
\hline
\textbf{Lam\'e coeff.} $\mu_p $(dyne/cm$^2$) & $1.07 \times 10^6$  & \textbf{Lam\'e coeff.} $\lambda_p $(dyne/cm$^2$) & $4.28 \times 10^6$ \\
\hline
\textbf{Hydraulic conductivity} $\kappa $(cm$^3$ s/g) & $5 \times 10^{-9}$  & \textbf{Mass storativity coeff.} $s_0 $(cm$^2$/dyne) & $5 \times 10^{-6}$   \\
\hline
\textbf{Biot-Willis constant} $\alpha$  & 1 & \textbf{Spring coeff.} $\beta $(dyne/cm$^4$) & $5 \times 10^7$   \\
\hline
\end{tabular}
}}
\caption{Geometry, fluid and structure parameters that are used in Example 1.}
\label{T1}
\end{table}
\end{center}
}}
Parameters given in Table~\ref{T1} are within the range of physiological values for blood flow. The problem was solved over the time interval $[0,0.006]$ s.

In order to verify the convergence estimates from Theorem~\ref{theorem_errors}, let the errors between the computed and the reference solution be defined as  $e_f = \boldsymbol v-\boldsymbol v_{ref}, e_{fp} = p_f-p_{f,ref}, e_v = \boldsymbol V-\boldsymbol V_{ref}, e_p = p_p-p_{p,ref},$ and $e_u = \boldsymbol U-\boldsymbol U_{ref}$.
We start by computing the rates of convergence in time. In order to do so, fix $\Delta x=0.016$ and define the reference solution to be the one obtained  with $\triangle t = 5 \times 10^{-7}$. 
Table~\ref{T2} shows the error between the reference solution and solutions obtained with $\triangle t = 10^{-6}, 5\times 10^{-6}, 10^{-5},$ and $3 \times 10^{-5}$ for the fluid velocity $\boldsymbol v$, fluid pressure $p_f$, pressure in the pores $p_p$, displacement $\boldsymbol U$ and its velocity $\boldsymbol V$, respectively.
 \begin{table}[ht!]
\begin{center}
\begin{tabular}{| l | c  c | c  c | c  c | c c |}
\hline
$ \triangle t $ & $||e_f||_{l^{\infty}(L^2)}$ & rate & $||e_f||_{l^2(H^1)}$& rate& $||e_{fp}||_{l^2(L^2)}$  & rate & $||e_v||_{l^{\infty}(L^2)}$  & rate \\
\hline
\hline
$3 \times 10^{-5}$ & $7.6 \textrm{e}-1$  & - & $8.8 \textrm{e}-2$  & - & $8.9 \textrm{e}-1$ & -& $3.0 \textrm{e}-2$ & - \\ 
\hline  
$10^{-5}$ & $2.7 \textrm{e}-1$  & 0.93 & $3.1 \textrm{e}-2$  & 0.95 & $3.4\textrm{e}-1$ & 0.88& $1.0 \textrm{e}-2$& 0.84 \\ 
\hline   
$5 \times 10^{-6}$ & $1.3 \textrm{e}-1$  & 1.04& $1.5 \textrm{e}-2$  &1.03&$ 1.6 \textrm{e}-1$& 1.05 & $6.0 \textrm{e}-3$ & 0.97\\ 
\hline   
$10^{-6}$ & $1.5 \textrm{e}-2$  & 1.36 & $1.8 \textrm{e}-3$  & 1.32 & $1.8 \textrm{e}-2$& 1.36 & $6.9 \textrm{e}-4$ & 1.36\\ 
\hline 
\end{tabular}
\vskip 0.1in
\begin{tabular}{| l | c  c | c  c  | c  c |}
\hline
$ \triangle t $ & $||e_p||_{l^{\infty}(L^2)}$  & rate & $||e_p||_{l^2(H^1)}$ & rate & $ ||e_u||_{l^{\infty}(H^1)} $ & rate \\
\hline
\hline
$3 \times 10^{-5}$  &$6.3 \textrm{e}-1$ & - &$4.1 \textrm{e}-1$  & - & $7.7 \textrm{e}-5$  & - \\ 
\hline  
$10^{-5}$ & $2.2 \textrm{e}-1$  & 0.95 & $1.5 \textrm{e}-1$  & 0.92 & $3.0 \textrm{e}-5$  & 0.85 \\ 
\hline   
$5 \times 10^{-6}$  & $1.1 \textrm{e}-1$ & 1.04  & $7.4\textrm{e}-2$ & 1.02 & $1.5 \textrm{e}-5$ & 0.97\\ 
\hline   
$10^{-6}$ & $1.2 \textrm{e}-2$  & 1.35 & $8.5 \textrm{e}-3$  & 1.34 & $1.8 \textrm{e}-6$  & 1.32  \\ 
\hline 
\end{tabular}
\end{center}
\caption{Convergence in time. }
\label{T2}
\end{table}

To study the convergence in space, we take $\Delta t = 5 \times 10^{-6}$ and define the reference solution to be the one obtained with $\Delta x = r_p/14 = 0.007$. Table~\ref{T3} shows errors between the 
reference solution and the solutions obtain using $\Delta x = 0.01, 0.0125, 0.0167$ and $0.025.$
 \begin{table}[ht!]
\begin{center}
\begin{tabular}{| l | c  c | c c | c  c | c  c |}
\hline
$ \triangle x $ & $||e_f||_{l^{\infty}(L^2)}$ & rate & $||e_f||_{l^2(H^1)}$ & rate & $||e_{fp}||_{l^2(L^2)}$  & rate & $||e_v||_{l^{\infty}(L^2)}$  & rate \\
\hline
\hline
$r_p/4$ & $2.4 \textrm{e}-1$  & - & $6.8 \textrm{e}-1$  & - & $3.1 \textrm{e}-1$ & - & $2.5 \textrm{e}-2$ & - \\ 
\hline  
$r_p/5$ & $1.9\textrm{e}-1$  & 1.01 & $6.2 \textrm{e}-1$  & 0.43 & $2.6 \textrm{e}-1 $& 0.83 & $1.8 \textrm{e}-2$ & 1.46 \\ 
\hline   
$r_p/6$ & $1.5\textrm{e}-1$  & 1.55& $5.3 \textrm{e}-1$  &0.87 & $2.1\textrm{e}-1 $& 1.12 & $1.4 \textrm{e}-2$ & 1.37 \\ 
\hline   
$r_p/7$ & $1.2\textrm{e}-1$  & 1.37 & $4.5 \textrm{e}-1$  & 1.06 & $1.7 \textrm{e}-1$ & 1.32 & $1.0 \textrm{e}-2$ & 1.6\\ 
\hline 
\end{tabular}
\vskip 0.1in
\begin{tabular}{| l | c  c | c c | c  c  |}
\hline
$ \triangle x $ & $||e_p||_{l^{\infty}(L^2)}$ & rate & $||e_p||_{l^2(H^1)}$  & rate & $ ||e_u||_{l^{\infty}(H^1)} $ & rate \\
\hline
\hline
$r_p/4$ & $1.2 \textrm{e}+0$  & - & $4.0 \textrm{e}-1$  & - & $3.1 \textrm{e}-1$  & - \\ 
\hline  
$r_p/5$ & $0.9 \textrm{e}-1$  & 0.92 & $3.6 \textrm{e}-1$  & 0.47 & $2.6 \textrm{e}-1$  & 0.84\\ 
\hline   
$r_p/6$ & $0.8 \textrm{e}-1$  & 1.10 & $3.2 \textrm{e}-1$  & 0.73 & $2.1 \textrm{e}-1$  & 1.08 \\ 
\hline   
$r_p/7$ & $0.6 \textrm{e}-1 $  & 1.34 & $2.7 \textrm{e}-1$  & 1.01 & $1.8 \textrm{e}-1$  & 1.08 \\ 
\hline 
\end{tabular}
\end{center}
\caption{Convergence in space.}
\label{T3}
\end{table}

\begin{figure}[ht!]
 \centering{
 \includegraphics[scale=1]{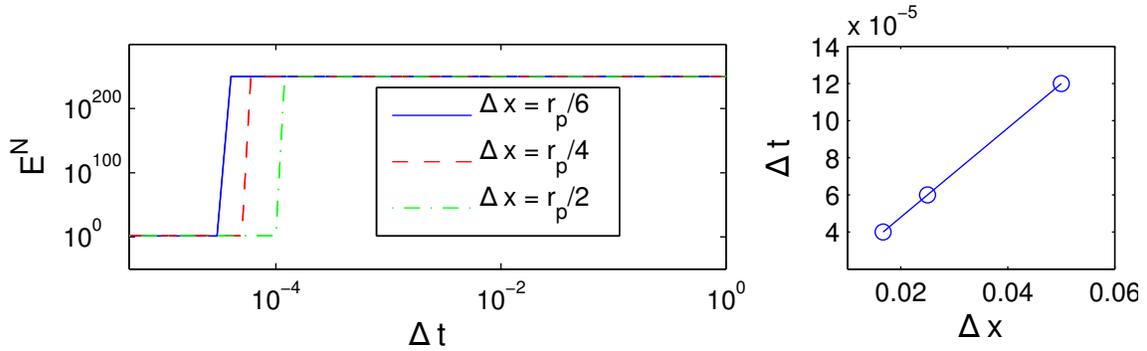}
 }
 \caption{Verification of the time-step condition~\eqref{CFL}. Left: Relation between the total energy of the system and the time step. Right: Relation between $\Delta x$ and the critical $\Delta t$.}
\label{F3}
 \end{figure}
To verify the necessity of the time-step condition~\eqref{CFL}, we compute the total energy $E^N$ of the system using different time steps. The time at which $E^N$ is computed
is either the time when $E^N$ becomes greater than $10^{250}$, or the final time $t^N=6$ ms. Figure~\ref{F3} shows the relation of the energy of the system and the time step (left), and the relation between $\Delta x$ and the critical $\Delta t$ (right).
Indeed, we observe a linear relation between $\Delta x$ and the critical value of $\Delta t$, with the proportionality constant $2.4 \textrm{e}-3$. This is less restrictive than the prediction~\eqref{CFL} from the theory, where the proportionality constant for the parameters in Table~\ref{T1} can be estimated as $3.5\textrm{e}-7$, indicating that the scheme enjoys better stability properties then prescribed by~\eqref{CFL}.

\subsection{Example 2.}
In this example we compare our numerical results to the ones obtained using a purely elastic model for the outer layer of the arterial wall. 
More precisely, while the fluid and the membrane are modeled as before, we assume there is no fluid contained within the wall, and we model the thick wall using 2D linear elasticity
\begin{equation*}
\rho_{p} \frac{D^2 \boldsymbol U}{D t^2}+ \beta \boldsymbol U - \nabla \cdot \boldsymbol \sigma^E= 0 \; \; \: \quad     \textrm{in} \; \Omega^p(t) \; \textrm{for} \; t \in (0,T).
\end{equation*}
The problem is solved using an operator-splitting approach performed in the same spirit as in this manuscript. 

\if 1=0
For the purpose of understanding the poroelastic effects to the structure displacement, we will distinguish two cases: the high storativity case, and the high permeability case. 
Namely, if $s_0>>\kappa$ (the high storativity case), from equation~\eqref{pressurefo} we can formally write $p_p \simeq -\frac{\alpha}{s_0}\nabla \cdot \boldsymbol U$. Substituting $p_p$ in equation~\eqref{structurefo},
the structure equation becomes
\begin{equation*}
\rho_{p} \frac{D^2 \boldsymbol U}{D t^2}+ \beta \boldsymbol U - 2 \mu_p \nabla \cdot \boldsymbol D(\boldsymbol U)- \lambda_p \nabla (\nabla \cdot \boldsymbol U)  -\frac{\alpha^2}{s_0} \nabla (\nabla \cdot \boldsymbol U) = 0 \; \; \: \quad     \textrm{in} \; \Omega^p(t) \; \textrm{for} \; t \in (0,T).
\end{equation*}
In this case, we recover a 2D elasticity equation, with a modified Lam\'e's first coefficient $\bar{\lambda}_p = \lambda_p+\frac{\alpha^2}{s_0}$. The effect of the pressure in this case is in changing the material properties of the structure, and altering the wave propagation speed. 

In the case when $\kappa > > s_0$ (the high permeability case), again from equation~\eqref{pressurefo} we can formally write $\nabla p_p \simeq \frac{\alpha}{\kappa}\frac{\partial \boldsymbol U}{\partial t}$. 
Substituting $\nabla p_p$ in equation~\eqref{structurefo}, we obtain the following equation
\begin{equation*}
\rho_{p} \frac{D^2 \boldsymbol U}{D t^2}+ \beta \boldsymbol U - 2 \mu_p \nabla \cdot \boldsymbol D(\boldsymbol U)- \lambda_p \nabla (\nabla \cdot \boldsymbol U)  +\frac{\alpha^2}{\kappa} \frac{\partial \boldsymbol U}{\partial t} = 0 \; \; \: \quad     \textrm{in} \; \Omega^p(t) \; \textrm{for} \; t \in (0,T).
\end{equation*}
In this case, the contribution from the pressure is a damping term, with the damping coefficient $\frac{\alpha^2}{\kappa}$.
\fi

For the purpose of understanding the poroelastic effects to the structure displacement, we distinguish two cases: the high storativity case $s_0>>\kappa$, and the high permeability case $\kappa>>s_0$. 
We give a comparison of the results obtained using the elastic model for the outer wall, and poroelastic model using two different values for $s_0$, and two different values for $\kappa$. The first test case for the poroelastic wall will correspond to the parameters $s_0$ and $\kappa$ from Example 1 ($s_0=5\times 10^{-6}, \kappa=5 \times 10^{-9}$),
the second test case will correspond to the increased value of $s_0$ ($s_0=2\times 10^{-5}, \kappa=5 \times 10^{-9}$), and the third example to the increased value of $\kappa$ ($s_0=5\times 10^{-6}, \kappa=10^{-4}$).
Figure~\ref{F5} shows the pressure pulse (colormap) and velocity streamlines obtained with the two models. The velocity magnitude is shown in Figure~\ref{F6}. 
  \begin{figure}[ht!]
 \centering{
 \includegraphics[scale=0.95]{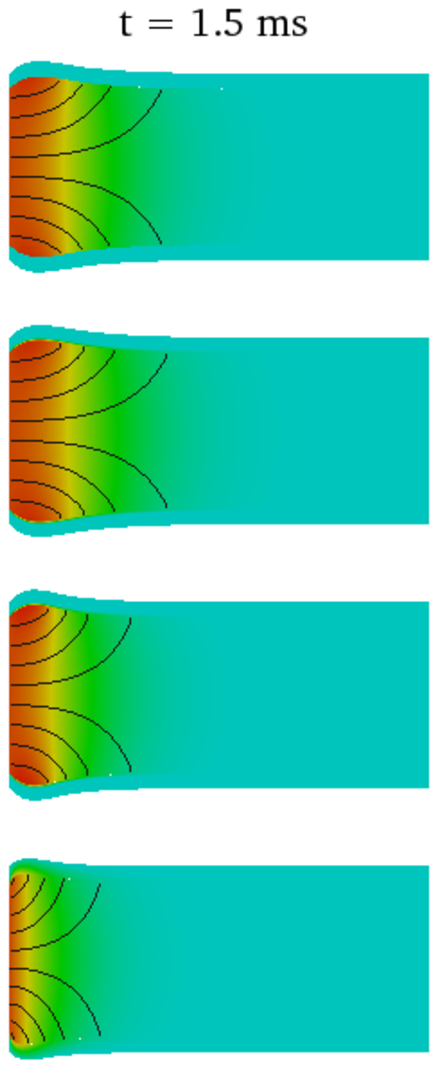}
  \includegraphics[scale=0.95]{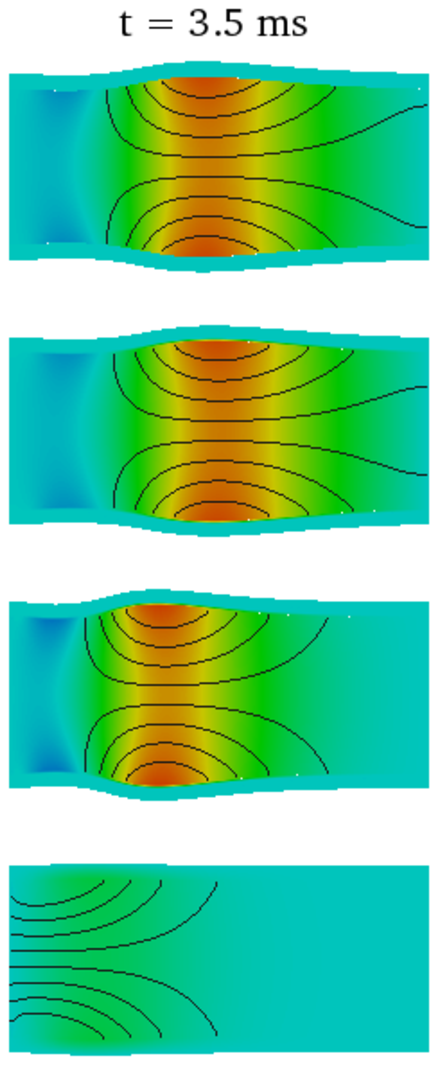}
   \includegraphics[scale=0.95]{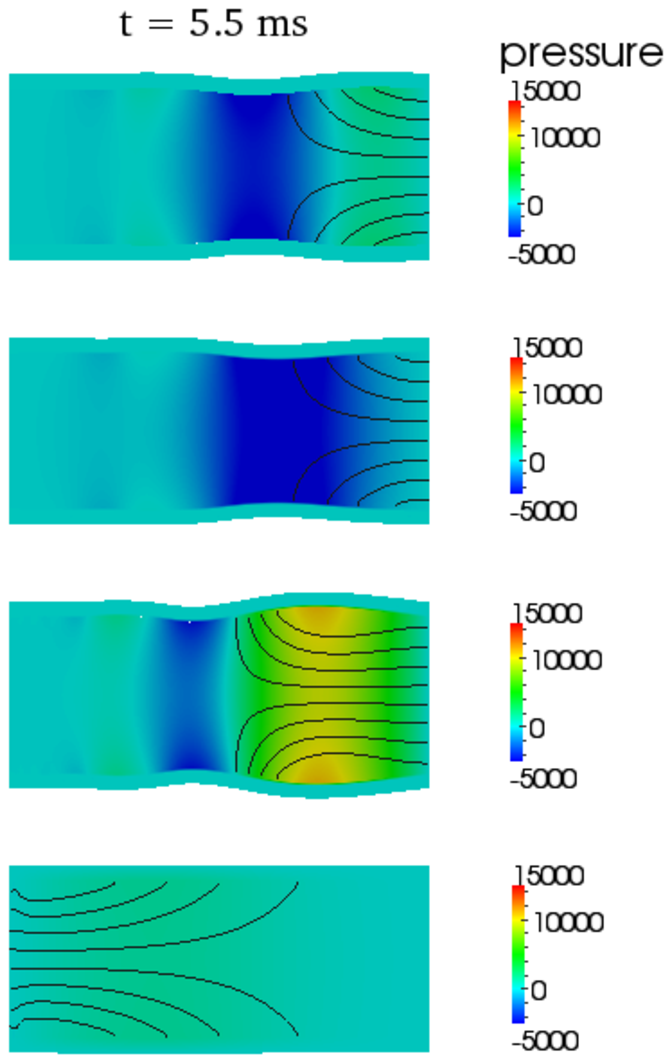}
 }
 \caption{Pressure in the lumen, velocity streamlines, and pressure in the wall at times $t=1.5$ ms, $t=3.5$ ms and $t=5.5$ ms. The outer layer of the arterial wall is model using a elastic model (top), poroelastic model with $s_0=5\times 10^{-6}, \kappa=5 \times 10^{-9}$ (middle top), poroelastic model with $s_0=2\times 10^{-5}, \kappa=5 \times 10^{-9}$ (middle bottom), and  poroelastic model with $s_0=5\times 10^{-6}, \kappa=10^{-4}$ (bottom).}
\label{F5}
 \end{figure}
   \begin{figure}[ht!]
 \centering{
 \includegraphics[scale=0.95]{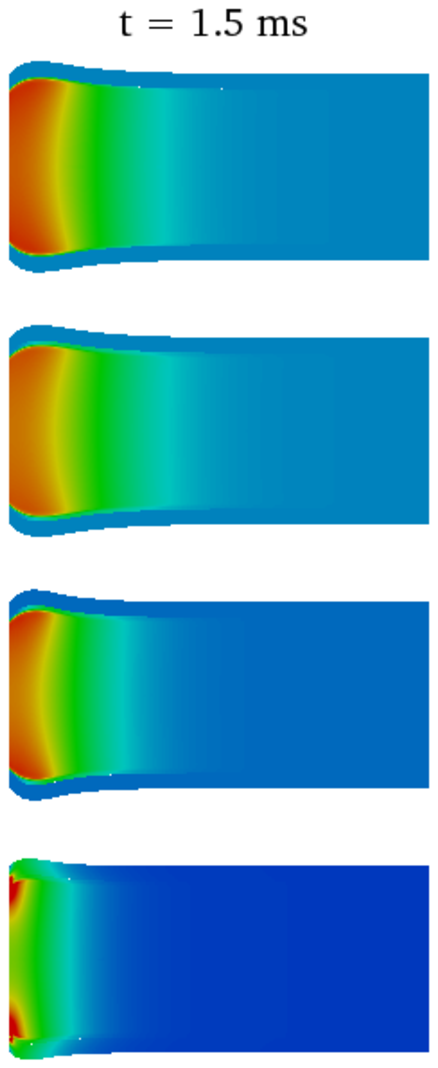}
  \includegraphics[scale=0.95]{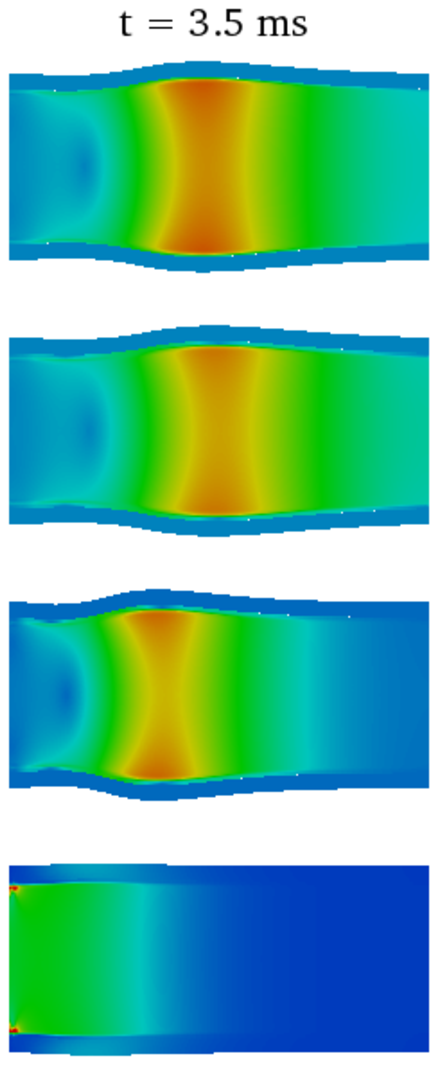}
   \includegraphics[scale=0.95]{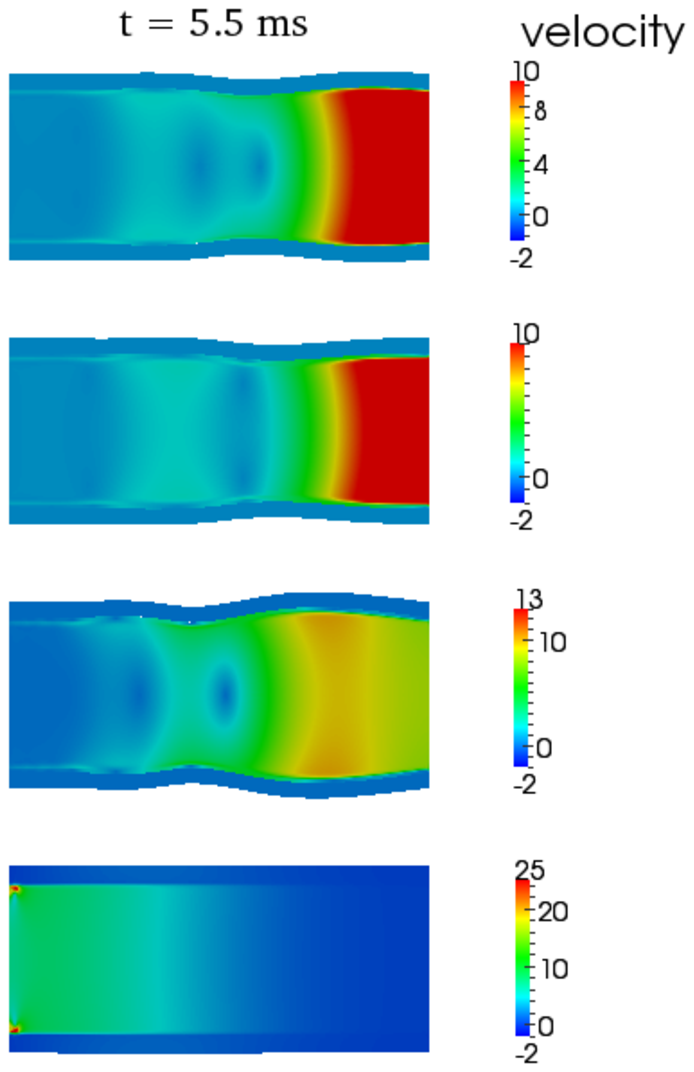}
 }
 \caption{Velocity magnitude at times $t=1.5$ ms, $t=3.5$ ms and $t=5.5$ ms. The outer layer of the arterial wall is model using a elastic model (top), poroelastic model with $s_0=5\times 10^{-6}, \kappa=5 \times 10^{-9}$ (middle top), poroelastic model with $s_0=2\times 10^{-5}, \kappa=5 \times 10^{-9}$ (middle bottom), and  poroelastic model with $s_0=5\times 10^{-6}, \kappa=10^{-4}$ (bottom).}
\label{F6}
 \end{figure}
 
To quantify the differences, we compute average quantities on each vertical line $S^{r}_i$ of the computational mesh $\Omega^r$, corresponding to the position $x_i=i \cdot \Delta x$, where $\Delta x =0.016$ and $r \in \{f,m\}$. The quantities of interest are
membrane displacement, the mean pressure, and the flowrate in the lumen:
\begin{equation*}
 \bar{p}_f (x_i) = \frac{1}{S^f_i} \int_{S^f_i} p_{f,h} ds, \quad  \bar{p}_p (x_i) = \frac{1}{S^p_i} \int_{S^p_i} p_{p,h} ds, \quad Q(x_i) = \int_{S^f_i} \boldsymbol v_h \cdot \boldsymbol e_x ds.
\end{equation*}
Figure~\ref{F4} shows a comparison between the flowrate in the lumen,  membrane displacement, and the mean pressure in the lumen and in the wall, obtained using a poroelastic model and an elastic model.
In the high permeability regime the structure displacement is the smallest, while in this case we observe the largest mean pressure in the wall. In the high storativity regime, we observe a delay in the pressure wave propagation speed, and qualitatively different displacement. 
\begin{figure}[ht!]
 \centering{
 \includegraphics[scale=0.75]{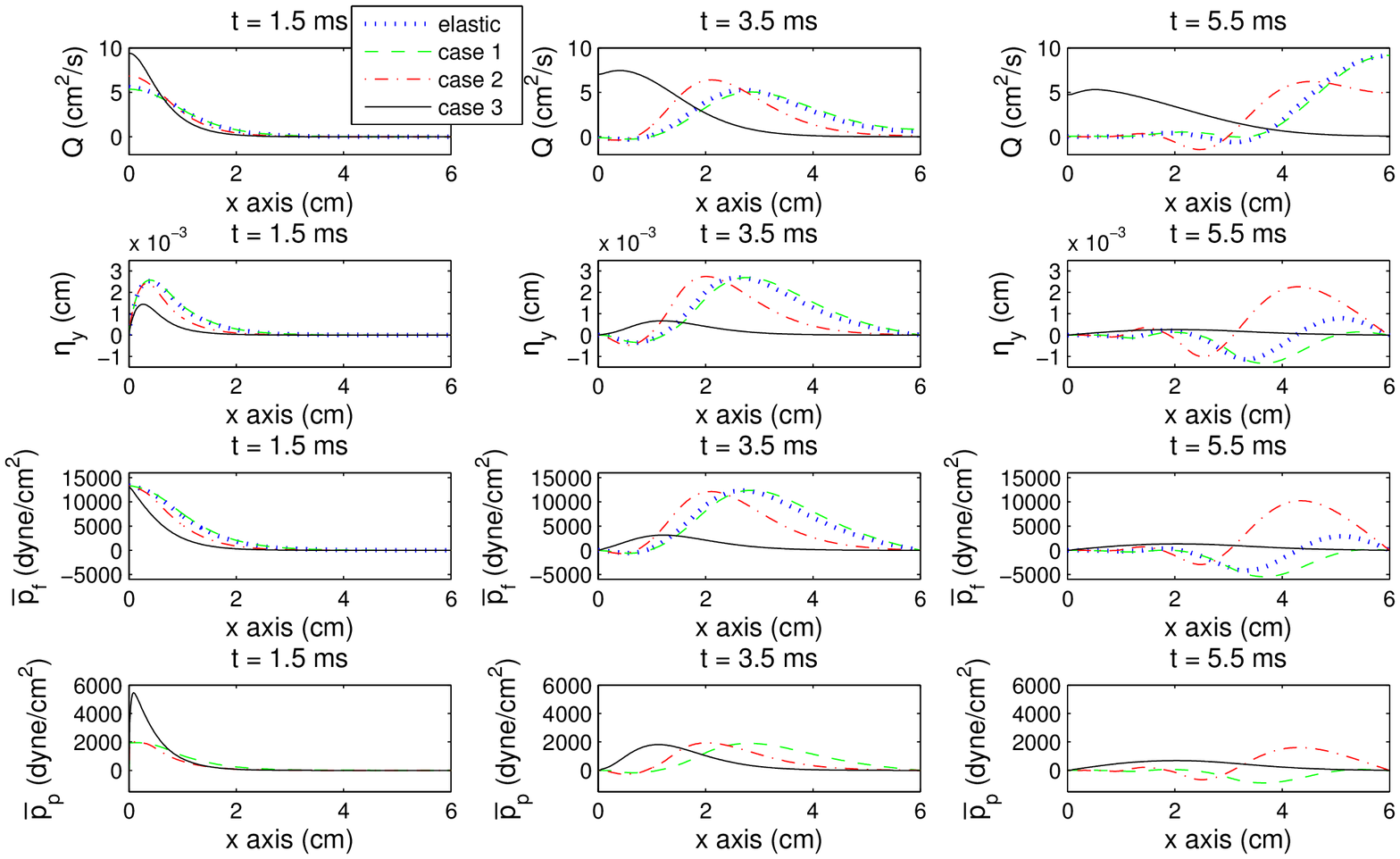} 
 }
 \caption{From top to bottom: Flowrate, radial displacement of the membrane, mean pressure in the lumen, and mean pressure in the pores. Results were obtained using the elastic model (dotted line), and poroelastic model with the following parameters: case 1 ($s_0=5 \times 10^{-6}, \kappa=5\times 10^{-9}$; dashed line), 
 case 2 ($s_0=2 \times 10^{-5}, \kappa=5\times 10^{-9}$; dash dot line), and case 3 ($s_0=5 \times 10^{-6}, \kappa=1\times 10^{-4}$; solid line).}
\label{F4}
 \end{figure}

 Finally, Figure~\ref{F8} shows the axial and radial velocity profiles at the center of the tube $x = 3$ cm for the three poroelastic test cases, computed at 6 different times. Again, we observe a different in the amplitude, and in the wave propagation speed between the three cases. In particular, there is
 a significant difference in the high permeability regime, where the amplitude of axial velocity is much smaller than in the other two cases, and the vertical velocity is always positive due to the high dissipation in the structure. 
 \begin{figure}[ht!]
 \centering{
 \includegraphics[scale=1]{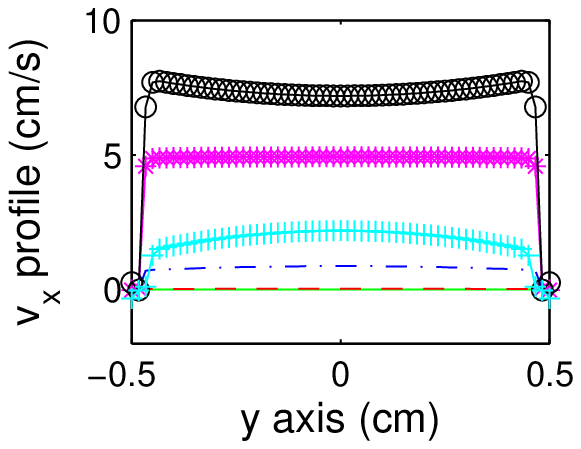} 
  \includegraphics[scale=1]{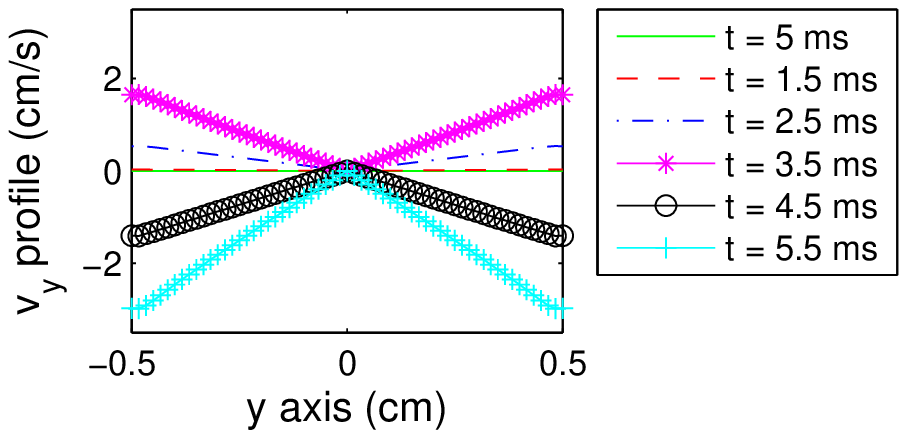} 
 \includegraphics[scale=1]{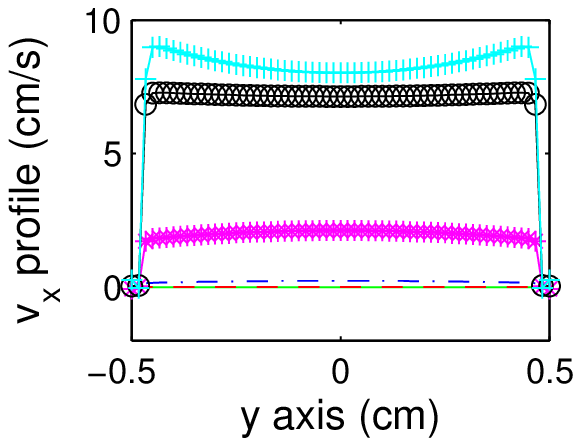} 
  \includegraphics[scale=1]{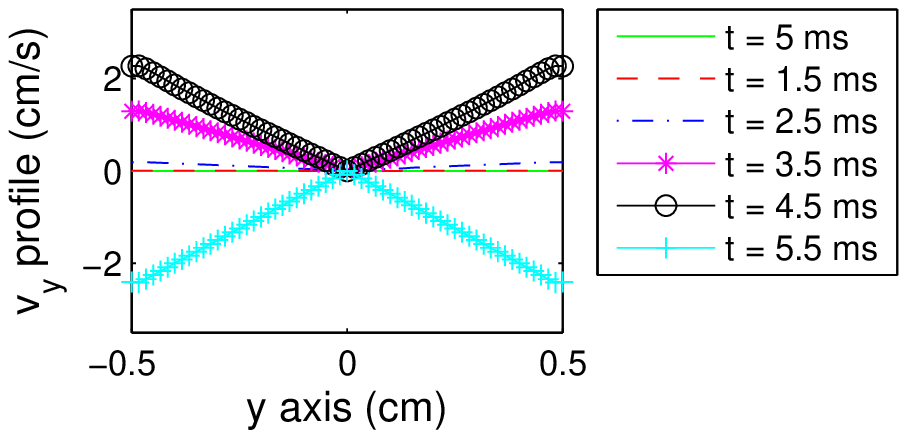} 
 \includegraphics[scale=1]{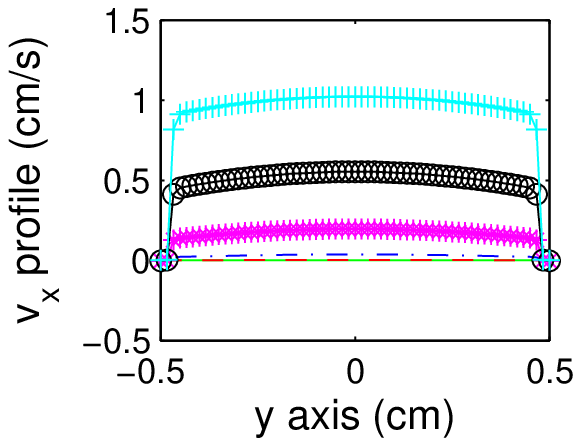} 
  \includegraphics[scale=1]{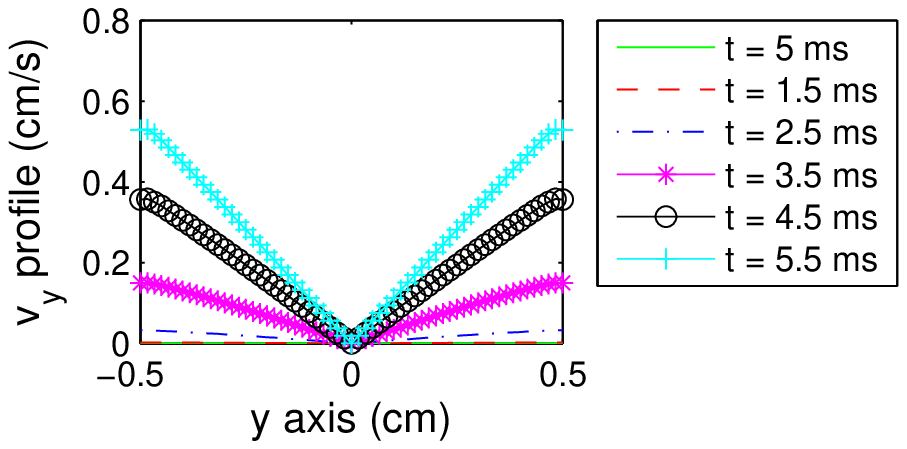} 
 }
 \caption{Fluid velocity profiles at six different times obtained using a poroelastic model with coefficients $s_0=5 \times 10^{-6}, \kappa=5 \times 10^{-9}$ (top), $s_0=2 \times 10^{-6}, \kappa=5 \times 10^{-9}$ (middle), and $s_0=5 \times 10^{-6}, \kappa= 10^{-4}$ (bottom).}
\label{F8}
 \end{figure}
 
\section{Conclusion}
The focus of this paper is on modeling and implementation of a fluid-poroelastic structure interaction problem. In particular, we study the interaction between the fluid a multilayered wall, where the wall consists of
a thin membrane, and a thick poroelastic medium. We proposed an explicit numerical algorithm based on the Lie operator splitting scheme. 
An alternative discrete problem formulation based on Nitsche's method for the enforcement of the interface conditions is under study. This new method can accommodate a mixed formulation for the Darcy's equations.

 We prove the conditional stability of the algorithm, and derive error estimates. 
Stability and convergence results are validated by the numerical simulations. The drawback of the scheme is that it requires pressure formulation for the Darcy equation. Concerning the application of the scheme to blood flow in arteries,  we test numerically the porous effects in the wall, comparing results obtained with different coefficients to the ones obtained using a purely elastic model.
We observe different behavior depending on the storativity or permeability dominant regime.

\clearpage


\appendix


\section{Auxiliary results}

We collect in this section some auxiliary results and proofs that complement the stability and convergence analysis of the proposed scheme. They are either a consequence of the standard theory of the finite element method or they follow from basic results of approximation theory. For the reader's convenience we report them separately from the main body of the manuscript.

For any real numbers $a,b$ the following algebraic identities are satisfied:
\begin{equation}
 (a-b)a = \frac{1}{2} a^2 - \frac{1}{2} b^2+ \frac{1}{2}(a-b)^2,
\end{equation}
\begin{equation}
 \Delta t \sum_{n=0}^{N-1} a^{n+1} d_t b^{n+1}  = a^Nb^N - \Delta t \sum_{n=0}^{N-1}d_t a^{n+1} b^n - a^0 b^0, \label{int_by_parts}
\end{equation}
and for non-negative real numbers $a,b,$ and $\epsilon>0$
\begin{equation}
 ab\le \frac{a^2}{2\epsilon}+\frac{\epsilon b^2}{2}. \label{youngs}
\end{equation}

\begin{lemma}
Given the functional spaces $V^f_h \subset V^f, Q^f_h \subset Q^f, V^p_h \subset V^p$ and $Q^p_h \subset Q^p$, the following inequalities hold true:
\begin{itemize}
 \item local trace-inverse inequality:
\begin{equation}
 ||\psi_h||^2_{L^2(\Gamma)} \leq \frac{C_{TI}}{h} ||\psi_h||^2_{L^2(\Omega^p)}, \quad \forall \psi_h \in Q^p_h; \label{traceInverse}
\end{equation}
 \item Cauchy-Schwarz inequality:
 \begin{equation}
  \bigg|\int_{\Omega^{f/p}} \boldsymbol v \cdot \boldsymbol u d \boldsymbol x\bigg| \le ||\boldsymbol v||_{L^2(\Omega^{f/p})} ||\boldsymbol u||_{L^2(\Omega^{f/p})} \quad \forall \boldsymbol v, \boldsymbol u \in V^{f/p}; \label{CS}
 \end{equation}
  \item Poincar\'e - Friedrichs inequality: 
 \begin{equation}
  ||\boldsymbol v||_{L^2(\Omega^{f/p})} \leq C_{PF} ||\nabla \boldsymbol v||_{L^2(\Omega^{f/p})} \quad \forall \boldsymbol v \in V^{f/p}; \label{PF}
 \end{equation}
 \item trace inequality:
 \begin{equation}
 ||\boldsymbol v||_{L^2(\Gamma)} \leq C_T ||\boldsymbol v||^{1/2}_{L^2(\Omega^{f/p})} ||\nabla \boldsymbol v||^{1/2}_{L^2(\Omega^{f/p})} \quad \forall \boldsymbol v \in V^{f/p}; \label{trace}
 \end{equation}
 \item Korn inequality:
 \begin{equation}
  ||\nabla \boldsymbol v||_{L^2(\Omega^{f/p})} \leq C_K  ||D(\boldsymbol v)||_{L^2(\Omega^{f/p})} \quad \forall \boldsymbol v \in V^{f/p}. \label{korn}
 \end{equation}
Here constants $C_{PF}, C_T$ and $C_K$ depend on the domain $\Omega$, and constant $C_{TI}$ depends on the angles in the finite element mesh.
\end{itemize}
\end{lemma}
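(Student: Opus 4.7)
These are standard inequalities from finite element analysis and Sobolev space theory; the strategy is to treat them one at a time, citing classical proofs where appropriate, and to be careful that the hypotheses built into the spaces $V^f, V^p, Q^f, Q^p$ (in particular, the homogeneous Dirichlet conditions on portions of the boundary) are what make the Poincar\'e--Friedrichs and Korn estimates hold with the stated constants. The plan is therefore organizational rather than technical; I will indicate for each inequality which tool I intend to invoke and how its hypotheses are met in our setting.

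First I would dispose of the Cauchy--Schwarz inequality \eqref{CS}, which is simply the Cauchy--Schwarz inequality for the $L^2(\Omega^{f/p})$ inner product applied componentwise and followed by $\sum_i a_i b_i \le (\sum a_i^2)^{1/2}(\sum b_i^2)^{1/2}$. For the local trace--inverse inequality \eqref{traceInverse} I would use a standard scaling argument: on each element $K$ of the shape-regular, quasi-uniform triangulation whose face $F\subset\Gamma$ lies on the interface, equivalence of norms on the finite dimensional polynomial space on the reference element, combined with the usual affine scaling $\|\psi_h\|_{L^2(F)}^2 \le C h_K^{-1}\|\psi_h\|_{L^2(K)}^2$, and then summing over elements adjacent to $\Gamma$. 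The constant $C_{TI}$ depends only on the polynomial degree and the shape regularity constant, which is the standard dependence. I will refer to Ciarlet's monograph for the details.

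The Poincar\'e--Friedrichs inequality \eqref{PF} is non-trivial only because it requires a portion of the boundary on which $\boldsymbol v$ vanishes (or where the tangential/normal components vanish). Inspecting the definitions~\eqref{Vf(t)} and~\eqref{Vp} shows that elements of $V^f$ satisfy $\varphi_y=0$ on $\Gamma_0^f$, and elements of $V^p$ vanish on $\Gamma_{in}^p\cup\Gamma_{out}^p$ and have $\varphi_x=0$ on $\Gamma_{ext}^p$; these conditions on a subset of $\partial\Omega$ with positive surface measure are enough to apply the classical Poincar\'e--Friedrichs inequality, with $C_{PF}$ depending on the diameter and the geometry of the vanishing set. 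The trace inequality \eqref{trace} is the multiplicative (Gagliardo--Nirenberg type) trace inequality; the cleanest argument uses the divergence theorem applied to $\boldsymbol v\,|\boldsymbol v|$ times a smooth vector field $\boldsymbol\rho$ with $\boldsymbol\rho\cdot\boldsymbol n = 1$ on $\Gamma$, yielding $\int_\Gamma v^2 \le C\int_{\Omega^{f/p}} (v^2 + |v||\nabla v|)$, followed by Cauchy--Schwarz on the second term and the elementary bound $a+b \le 2\sqrt{a}\sqrt{b}+\cdots$; alternatively, one interpolates between $H^1$ and $L^2$ via the $H^{1/2}$ trace theorem. Finally, for Korn's inequality \eqref{korn}, the hypotheses on $V^{f/p}$ (vanishing on part of $\partial\Omega$) rule out rigid motions, so the classical Korn inequality in the form $\|\nabla\boldsymbol v\|_{L^2} \le C_K\|\boldsymbol D(\boldsymbol v)\|_{L^2}$ applies; I would cite Duvaut--Lions or Ciarlet for the proof, which proceeds via Lions' lemma and a compactness/contradiction argument.

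The one place where some care is needed---and which I would flag as the main organizational obstacle rather than a technical obstacle---is tracking the precise dependence of the constants. The trace--inverse constant $C_{TI}$ must depend only on the mesh shape regularity (not on $h$), the Poincar\'e--Friedrichs and trace constants must be uniform in the geometry of $\Omega^f$ and $\Omega^p$ (which is legitimate here since these domains are fixed by assumption in Section~4), and Korn's constant $C_K$ must be finite, which requires the vanishing subset of $\partial\Omega^{f/p}$ to be such that the only rigid motion compatible with the boundary conditions is zero. One can verify this last point by inspection: on $V^f$ the condition $\varphi_y=0$ on $\Gamma_0^f$ together with the rest of the function space excludes nontrivial rigid motions, and similarly on $V^p$ the Dirichlet conditions on $\Gamma_{in}^p\cup\Gamma_{out}^p$ eliminate translations and rotations.
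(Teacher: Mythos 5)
The paper itself offers no proof of this lemma: it is stated in the Appendix as a collection of standard facts, with the preamble remarking that they follow from standard finite element theory and approximation theory. So your overall strategy---treat each inequality separately, invoke the classical result, and check that the boundary conditions encoded in the spaces supply its hypotheses---is exactly the route the paper implicitly takes, and your handling of Cauchy--Schwarz, of the local trace-inverse inequality~\eqref{traceInverse} by elementwise scaling on a shape-regular mesh, and of the multiplicative trace inequality~\eqref{trace} is standard and sound.

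There is, however, a genuine gap in your verification of the Poincar\'e--Friedrichs inequality~\eqref{PF} on the fluid space. You claim that the condition $\varphi_y=0$ on $\Gamma_0^f$, being a condition ``on a subset of $\partial\Omega$ with positive surface measure,'' suffices to apply the classical Poincar\'e--Friedrichs inequality. For vector fields this is false: the constant field $\boldsymbol v=(1,0)$ satisfies $\varphi_y=0$ on $\Gamma_0^f$ and hence belongs to $V^f$ as defined in~\eqref{Vf(t)}, yet $\nabla\boldsymbol v=0$ while $\|\boldsymbol v\|_{L^2(\Omega^f)}>0$. Vanishing of a \emph{single component} on part of the boundary controls only that component; it gives no control whatsoever over $v_x$. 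Your argument does work for $V^p$, where the full Dirichlet condition on $\Gamma^p_{in}\cup\Gamma^p_{out}$ holds, but for $V^f$ either additional vanishing conditions on the tangential component must be identified, or the constant-state obstruction must be addressed some other way. (This is in fact a subtlety the paper itself glosses over, since~\eqref{PF} is invoked for the discrete fluid velocity in the stability proof; but a proof attempt cannot simply assert the standard hypothesis when it is not met.)

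A smaller inaccuracy sits in your Korn argument: the boundary conditions in $V^f$ do \emph{not} ``exclude nontrivial rigid motions''---horizontal translations $(a,0)$ lie in $V^f$. What $\varphi_y=0$ on $\Gamma_0^f$ excludes is the infinitesimal rotation, and that is precisely what matters for the seminorm form~\eqref{korn}, since translations have vanishing gradient and vanishing symmetric gradient alike. The compactness/contradiction argument you cite does go through, but it must be run modulo the full space of rigid motions (project onto their $L^2$-orthogonal complement, then use the trace of the $y$-component on $\Gamma_0^f$ to kill the rotation mode); as written, your justification rests on a premise that is false for $V^f$, even though the conclusion happens to be true.
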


Our analysis holds provided that the following regularity assumptions are satisfied by the exact solution of the problem.
\begin{assumption}\label{regularity_aassumptions}
 Let $X$ be a Banach space, and $(0,T)\subset \mathbb{R}$ a time interval. We define the $L^2$-space of functions $u:(0,T)\rightarrow X $ by
\begin{equation*}
L^2(0,T;X) = \{u| \;\textrm{u is measurable and } \;\int_0^T||u||^2_{X}dt <\infty  \},
\end{equation*}
and $L^\infty$-space by
\begin{equation*}
 L^\infty(0,T;X) = \{u| \; \textrm{u is measurable and } \;||u||_{X}\;\textrm{is essentially bounded} \}.
\end{equation*}
Then, the Sobolev space $W^{k,2}(0,T; X)=H^k(0,T;X)$ is defined to be the set of all functions $u \in L^2(0,T;X)$ whose distributional time derivative $D_t^{\alpha}u$ belongs to $L^2(0,T;X)$, for every $\alpha$ with $|\alpha|\leq k$.
We assume that the weak solution of \eqref{sys1}, complemented by the prescribed, interface, boundary and initial conditions, is such that
\begin{eqnarray}
 & &  \boldsymbol v \in H^1(0,T; H^{k+1}(\Omega^f)) \cap H^2(0,T; L^2(\Omega^f)), \nonumber \\
 & & p_f \in L^2(0,T; H^{s+1}(\Omega^f)), \nonumber\\
 & &  \boldsymbol \eta \in L^{\infty}(0,T; H^{k+1}(0,L)) \cap H^{2}(0,T; H^{k+1}(0,L)) \cap H^3(0,T; L^{2}(0,L)), \nonumber \\
 & &  \boldsymbol \xi \in L^{\infty}(0,T; H^{k+1}(0,L)) \cap H^{2}(0,T; H^{k+1}(0,L)) \cap H^3(0,T; L^{2}(0,L)), \nonumber  \\
 & &  \partial_t \boldsymbol \xi \in  L^{\infty}(0,T; H^{k+1}(0,L)), \nonumber \\
 & &  \partial_{tt} \boldsymbol \xi \in  L^{\infty}(0,T; L^2(0,L)), \label{true_sol} \\
 & &  \boldsymbol U \in L^{\infty}(0,T; H^{k+1}(\Omega^p))  \cap H^{2}(0,T; H^{k+1}(\Omega^p)) \cap H^3(0,T; L^{2}(\Omega^p)), \nonumber \\
 & &  \boldsymbol V \in L^{\infty}(0,T; H^{k+1}(\Omega^p)) \cap H^3(0,T; L^{2}(\Omega^p)), \nonumber \\
 & &  \partial_t \boldsymbol  V \in L^{\infty}(0,T; H^{k+1}(\Omega^p)), \nonumber \\
 & &  \partial_{tt} \boldsymbol  V \in L^{\infty}(0,T; L^2(\Omega^p)), \nonumber \\
 & & p_p \in L^{\infty}(0,T; H^{k+1}(\Omega^p)) \cap H^1(0,T; H^{k+1}(\Omega^p)) \cap H^2(0,T; L^2(\Omega^p)). \nonumber
 \end{eqnarray}
\end{assumption}

Then, our finite element spaces satisfy the approximation properties reported below.
\begin{lemma}\label{approximation_properties} 
Let $S_h $ be an orthogonal projection operator with respect to $a_f(\cdot, \cdot)$, onto $X_h^f$,  defined in~\eqref{sh1},
$P_h$ be the Lagrangian interpolation operator onto $V_h^p$, and let $\Pi^{f/p}_h$ be the $L^2$-orthogonal projections onto $Q_h^{f/p}$. Using piecewise polynomials of degree $k$ and $s$, we have:
\begin{eqnarray}
& & ||\boldsymbol v - S_h\boldsymbol v||_{H^1(\Omega^f)} \le C h^k ||\boldsymbol v||_{H^{k+1}(\Omega^f)}, \label{ap1}\\
& & ||p_{f} - \Pi^f_h p_{f}||_{L^2(\Omega^f)} \le C h^{s+1} ||p_{f}||_{H^{s+1}(\Omega^f)},  \label{pif}\\
& & ||\boldsymbol U - P_h\boldsymbol U||_{L^2(\Omega^p)} \le C h^{k+1} ||\boldsymbol U||_{H^{k+1}(\Omega^p)}, \\
& & ||\boldsymbol U - P_h\boldsymbol U||_{H^1(\Omega^p)} \le C h^k ||\boldsymbol U||_{H^{k+1}(\Omega^p)}, \\
& & ||p_p - \Pi^p_h p_p||_{L^2(\Omega^p)} \le C h^{k+1} ||p_p||_{H^{k+1}(\Omega^p)}, \\
& & ||p_p - \Pi^p_h p_p||_{H^1(\Omega^p)} \le C h^{k} ||p_p||_{H^{k+1}(\Omega^p)}. \label{ap2}
\end{eqnarray}
Finally, since $P_h|_{\Gamma}$  is a Lagrangian interpolant, we obtain:
\begin{eqnarray}
& & ||\boldsymbol U - P_h\boldsymbol U||_{L^2(\Gamma)} =  ||\boldsymbol \eta - (P_h|_{\Gamma})\boldsymbol \eta||_{L^2(\Gamma)} \le C h^{k+1} ||\boldsymbol \eta||_{H^{k+1}(\Gamma)}, \\
& & ||\boldsymbol U - P_h\boldsymbol U||_{H^1(\Gamma)} =  ||\boldsymbol \eta - (P_h|_{\Gamma})\boldsymbol \eta||_{H^1(\Gamma)}\le C h^k ||\boldsymbol \eta||_{H^{k+1}(\Gamma)}, \\
& & || \boldsymbol V - P_h \boldsymbol V||_{L^2(\Gamma)}  =  ||\boldsymbol \xi - (P_h|_{\Gamma})\boldsymbol \xi||_{L^2(\Gamma)}\le C h^{k+1} ||\boldsymbol \xi||_{H^{k+1}(\Gamma)}.
\end{eqnarray}
\end{lemma}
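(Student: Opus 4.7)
The plan is to establish each of the listed estimates by invoking classical finite element approximation theory (Bramble--Hilbert, scaling on a reference element, and Cl\'ement/Scott--Zhang--type quasi-interpolation where needed), handling the projectors in order of increasing delicacy: first the nodal interpolant $P_h$, then the $L^2$-projections $\Pi_h^{f/p}$, and finally the energy projection $S_h$. I would assume throughout that the family of triangulations is shape-regular (and quasi-uniform where required by inverse estimates), which is the standing hypothesis for the interpolation theory invoked in~\cite{ciarlet1978finite}.

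First I would handle the Lagrangian interpolation operator $P_h$. On each element $K$ of the triangulation, the standard scaled Bramble--Hilbert estimate
\[
\|u-P_h u\|_{H^m(K)} \le C\, h_K^{k+1-m}\, |u|_{H^{k+1}(K)},\qquad m=0,1,
\]
holds for $u\in H^{k+1}(K)$, provided $k+1>d/2$ so that pointwise evaluation is well defined (otherwise one replaces $P_h$ by a Scott--Zhang quasi-interpolant with identical global rates). Summing over $K$ yields the bounds on $\|\boldsymbol U - P_h \boldsymbol U\|_{L^2(\Omega^p)}$ and $\|\boldsymbol U - P_h \boldsymbol U\|_{H^1(\Omega^p)}$. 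Because the restriction of a Lagrangian interpolant to a boundary edge coincides with the one-dimensional Lagrangian interpolant on that edge, the same scaled estimate applied elementwise on $\Gamma$ delivers the trace bounds for $\boldsymbol\eta$ and $\boldsymbol\xi$, under the regularity hypotheses collected in Assumption~\ref{regularity_aassumptions}.

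Next, for the $L^2$-orthogonal projections $\Pi_h^{f/p}$, optimality gives $\|q-\Pi_h q\|_{L^2}\le \|q-\psi_h\|_{L^2}$ for every $\psi_h$ in the discrete pressure space. Inserting the nodal (or Scott--Zhang) interpolant of $q$ and applying the elementwise Bramble--Hilbert estimate of the previous paragraph yields \eqref{pif} with rate $h^{s+1}$ for $p_f$ and the $L^2$-rate $h^{k+1}$ for $p_p$. The $H^1$-stability of $\Pi_h^p$, needed for the bound $\|p_p-\Pi_h^p p_p\|_{H^1(\Omega^p)}\le Ch^k\|p_p\|_{H^{k+1}(\Omega^p)}$, follows from the quasi-uniformity of the mesh combined with an inverse inequality and a triangle-inequality argument: write $p_p-\Pi_h^p p_p = (p_p-\psi_h) + (\psi_h - \Pi_h^p p_p)$, use the inverse inequality on the discrete term, and bound the $L^2$-norm of $\psi_h - \Pi_h^p p_p$ by the approximation estimate already established.

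The most delicate piece is the estimate for $S_h$, since $X_h^f$ is the discretely divergence-free subspace of $V_h^f$ rather than all of $V_h^f$. Coercivity of $a_f$ on $V^f$ (Korn \eqref{korn}) together with Galerkin orthogonality yields
\[
\|\boldsymbol v-S_h\boldsymbol v\|_{H^1(\Omega^f)}^2 \le C\,a_f(\boldsymbol v-S_h\boldsymbol v,\boldsymbol v-S_h\boldsymbol v)\le C\,a_f(\boldsymbol v-\boldsymbol w_h,\boldsymbol v-\boldsymbol w_h)\qquad \forall\,\boldsymbol w_h\in X_h^f,
\]
so the task reduces to bounding the best approximation error of $\boldsymbol v$ by elements of the \emph{constrained} space $X_h^f$. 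The main obstacle is that a generic velocity interpolant $\tilde{\boldsymbol v}_h\in V_h^f$ need not satisfy the discrete divergence constraint. To overcome this I would invoke a Fortin-type correction based on the inf-sup condition~\eqref{infsup}: given $\tilde{\boldsymbol v}_h$ (for example, a Scott--Zhang interpolant of $\boldsymbol v$), solve for $\boldsymbol z_h\in V_h^f$ satisfying $(\psi_h,\nabla\cdot\boldsymbol z_h)=(\psi_h,\nabla\cdot(\boldsymbol v-\tilde{\boldsymbol v}_h))$ for every $\psi_h\in Q_h^f$, with the inf-sup stability guaranteeing $\|\boldsymbol z_h\|_{H^1(\Omega^f)}\le C\|\boldsymbol v-\tilde{\boldsymbol v}_h\|_{H^1(\Omega^f)}$. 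Since $\boldsymbol v$ is exactly divergence-free, $\tilde{\boldsymbol v}_h+\boldsymbol z_h\in X_h^f$ and approximates $\boldsymbol v$ to the same order $O(h^k)$ as $\tilde{\boldsymbol v}_h$. Choosing $\boldsymbol w_h=\tilde{\boldsymbol v}_h+\boldsymbol z_h$ in the Galerkin-orthogonality inequality above, together with the standard Bramble--Hilbert estimate for $\tilde{\boldsymbol v}_h$, yields~\eqref{ap1}. All remaining estimates in the statement are then either direct elementwise applications of the same machinery or immediate consequences of the preceding ones.
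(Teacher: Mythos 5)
Your proposal is correct and follows essentially the same route as the paper: the paper's proof of \eqref{ap1} consists of citing the Girault--Raviart result that, under the inf-sup condition \eqref{infsup}, the best approximation of a divergence-free field by elements of the constrained space $X_h^f$ is comparable to its best approximation in $V_h^f$, and your Fortin-type correction combined with the C\'ea-type quasi-optimality of $S_h$ is exactly the standard proof of that result. The remaining estimates are simply cited from Ciarlet in the paper, and your elementwise Bramble--Hilbert arguments (including the inverse-inequality argument for the $H^1$ bound of the $L^2$-projection and the edge-restriction argument on $\Gamma$) are the standard proofs behind those citations.
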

\begin{proof}
The proof of~\eqref{ap1} follows from~\cite{girault1979finite}. Precisely, since we assumed that $V_h^f$ and $Q_h^f$ satisfy the discrete \emph{inf-sup} condition~\eqref{infsup}, there exists a constant $C$ 
such that for $\boldsymbol v \in V^f$, with $\nabla \cdot \boldsymbol v =0,$ we have
\begin{equation*}
 \inf_{\boldsymbol x_h \in X^f_h} ||\boldsymbol v -\boldsymbol x_h||_{H^1(\Omega^f)} \le C   \inf_{\boldsymbol v_h \in V^f_h} ||\boldsymbol v -\boldsymbol v_h||_{H^1(\Omega^f)}.
\end{equation*}
For the proof of the other inequalities, see~\cite{ciarlet1978finite}.
\end{proof}

\begin{lemma} \label{Lemma_consistency}
 (Consistency errors:) The following inequalities hold:
\begin{equation*}
\Delta t \sum_{n=0}^{N-1}||d_t \boldsymbol \varphi^{n+1}-\partial_t \boldsymbol \varphi^{n+1}||^2_{L^2(\Omega)} \leq C \Delta t^2 ||\partial_{tt} \boldsymbol \varphi||^2_{L^2(0,T;L^2(\Omega))},
\end{equation*}
\begin{equation*}
\Delta t\sum_{n=1}^{N-1}||d_{tt} \boldsymbol \varphi^{n+1}-\partial_t (d_t \boldsymbol \varphi^{n+1})||^2_{L^2(\Omega)}\leq C \Delta t^2 ||\partial_{ttt} \boldsymbol \varphi||^2_{L^2(0,T;L^2(\Omega))},
\end{equation*}
\begin{equation*}
\Delta t^3 \sum_{n=1}^{N-1}||d_{tt} \boldsymbol \varphi^{n+1}||^2_{L^2(\Omega)} \leq C \Delta t^2 ||\partial_{tt} \boldsymbol \varphi||^2_{L^2(0,T;L^2(\Omega))},
\end{equation*}
\begin{equation*}
 \Delta t  \sum_{n=0}^{N-1} ||\nabla (\boldsymbol \varphi^{n+1}-\boldsymbol \varphi^n)||^2_{L^2(\Omega)} \leq C \Delta t^2 ||\partial_t \boldsymbol \varphi||^2_{L^2(0,T; H^1(\Omega))},
\end{equation*}
\begin{equation*}
 \Delta t\sum_{n=0}^{N-1}||\nabla (d_t \boldsymbol \varphi^{n+1}-\partial_t \boldsymbol \varphi^{n+1})||^2_{L^2(\Omega)} \leq C \Delta t^2 ||\partial_{tt} \boldsymbol \varphi||^2_{L^2(0,T;H^1(\Omega))},
\end{equation*}
\begin{equation*}
\Delta t^3\sum_{n=1}^{N-1} ||D(d_{tt} \boldsymbol \varphi^{n+1})||^2_{L^2(\Omega)} \leq C \Delta t^2 ||\partial_{tt} \boldsymbol \varphi||^2_{L^2(0,T;H^1(\Omega))},
\end{equation*}
\begin{equation*}
\Delta t^3 \sum_{n=1}^{N-1} \mathcal{M}(d_{tt} \boldsymbol \eta^{n+1}) \leq C \Delta t^2 ||\partial_{tt} \boldsymbol \eta||^2_{L^2(0,T;H^1(0,L))},
\end{equation*}
\begin{equation*}
||d_t \boldsymbol \varphi^{N}-\partial_t \boldsymbol \varphi^{N}  ||^2_{L^2(\Omega)} \le \Delta t^2  \max_{0 \le n \le N} ||\partial_{tt} \boldsymbol \varphi^n||^2_{L^2(\Omega)}= \Delta t^2 ||\partial_{tt} \boldsymbol \varphi||_{l^\infty(0,T;L^2(\Omega))},
\end{equation*}
\begin{equation*}
 \Delta t^2 ||d_t \boldsymbol \varphi^{N}||^2_{L^2(\Omega)} \leq \Delta t^2 \max_{0\le n \le N} ||\partial_t \boldsymbol \varphi^n||^2_{L^2(\Omega)} = \Delta t^2 ||\partial_t \boldsymbol \varphi||_{l^\infty(0,T;L^2(\Omega))},
\end{equation*}
\begin{equation*}
 \Delta t^2 ||D(d_t \boldsymbol \varphi^{N})||^2_{L^2(\Omega)} \leq \Delta t^2 \max_{0\le n \le N} ||\partial_t D(\boldsymbol \varphi^n)||^2_{L^2(\Omega)} = \Delta t^2 ||\partial_t \boldsymbol \varphi||_{l^\infty(0,T;H^1(\Omega))},
\end{equation*}
\begin{equation*}
 \Delta t^2 \mathcal{M}(d_t \boldsymbol \eta^{N}) \leq \Delta t^2 \max_{0\le n \le N} \mathcal{M}( \partial t \boldsymbol \eta^n) = \Delta t^2 ||\partial_t \boldsymbol \eta||_{l^\infty(0,T;H^1(0,L))}.
\end{equation*}
\end{lemma}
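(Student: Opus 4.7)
The plan is to derive each of the listed inequalities directly from Taylor's formula with integral remainder, followed by Cauchy--Schwarz, and then summation over the discrete time levels. All of the estimates share the same structure, so I would prove one or two representative bounds in detail and remark that the remaining ones follow by analogous manipulations.

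For the first inequality, I would start from the identity
\begin{equation*}
d_t \boldsymbol \varphi^{n+1} - \partial_t \boldsymbol \varphi^{n+1} = \frac{1}{\Delta t}\int_{t^n}^{t^{n+1}}(s - t^n)\,\partial_{tt}\boldsymbol \varphi(s)\,ds,
\end{equation*}
which follows from Taylor expanding $\boldsymbol \varphi^n$ around $t^{n+1}$. Taking the $L^2(\Omega)$ norm in the spatial variable, applying Cauchy--Schwarz in time, and using $\int_{t^n}^{t^{n+1}}(s-t^n)^2 ds = \Delta t^3/3$ gives
\begin{equation*}
\|d_t \boldsymbol \varphi^{n+1} - \partial_t \boldsymbol \varphi^{n+1}\|^2_{L^2(\Omega)} \le \frac{\Delta t}{3}\int_{t^n}^{t^{n+1}} \|\partial_{tt}\boldsymbol \varphi(s)\|^2_{L^2(\Omega)}\,ds.
\end{equation*}
Multiplying by $\Delta t$ and summing over $n=0,\dots,N-1$ produces the desired bound since the intervals $(t^n,t^{n+1})$ tile $(0,T)$.

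The same strategy handles the other bounds, with only the choice of Taylor expansion varying. For the second-order difference $d_{tt}\boldsymbol \varphi^{n+1}$, I would expand both $\boldsymbol \varphi^{n-1}$ and $\boldsymbol \varphi^{n+1}$ around $t^n$; the $\partial_t$ terms cancel and one is left with an integral of $\partial_{tt}\boldsymbol \varphi$ over $(t^{n-1},t^{n+1})$ of length $2\Delta t$. Cauchy--Schwarz then yields $\|d_{tt}\boldsymbol \varphi^{n+1}\|^2_{L^2} \le C\,\Delta t^{-1}\int_{t^{n-1}}^{t^{n+1}}\|\partial_{tt}\boldsymbol \varphi\|^2_{L^2}\,ds$, so the prefactor $\Delta t^3$ in the statement gives the required $\Delta t^2$ after summation. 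For the $d_{tt}\boldsymbol \varphi^{n+1}-\partial_t(d_t \boldsymbol \varphi^{n+1})$ estimate I would expand to one higher order so that the remainder involves $\partial_{ttt}\boldsymbol \varphi$, and proceed identically. The gradient-type bounds $\Delta t\sum\|\nabla(\boldsymbol \varphi^{n+1}-\boldsymbol \varphi^n)\|^2_{L^2}$ and $\Delta t\sum\|\nabla(d_t\boldsymbol \varphi^{n+1}-\partial_t\boldsymbol \varphi^{n+1})\|^2_{L^2}$, as well as the membrane-energy bound involving $\mathcal{M}$, follow by first writing $\boldsymbol \varphi^{n+1}-\boldsymbol \varphi^n = \int_{t^n}^{t^{n+1}}\partial_t\boldsymbol \varphi(s)\,ds$ (or the analogous representation for the $d_t-\partial_t$ residual) and then commuting the spatial operator $\nabla$ (respectively $\mathcal{M}^{1/2}$) with the time integral, which is legal under the regularity Assumption \ref{regularity_aassumptions}.

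The last three estimates, which are $l^\infty$-type single-term bounds, follow from taking the pointwise supremum inside the integral representation obtained above. For instance,
\begin{equation*}
\|d_t\boldsymbol \varphi^N - \partial_t\boldsymbol \varphi^N\|_{L^2(\Omega)} \le \frac{1}{\Delta t}\int_{t^{N-1}}^{t^N}(s-t^{N-1})\,ds \cdot \sup_{s\in[t^{N-1},t^N]}\|\partial_{tt}\boldsymbol \varphi(s)\|_{L^2(\Omega)},
\end{equation*}
which yields the claim after squaring. The analogous identities $d_t\boldsymbol \varphi^N = \frac{1}{\Delta t}\int_{t^{N-1}}^{t^N}\partial_t\boldsymbol \varphi(s)\,ds$ and its gradient/$\mathcal{M}$ versions dispose of the remaining two inequalities.

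The only mild obstacle is bookkeeping: one must line up, for each of the twelve estimates, the correct order of Taylor expansion (so that the leading nonzero term matches the time derivative that appears on the right-hand side), the correct power of $\Delta t$ produced by integrating the polynomial weight, and the correct range of summation (starting from $n=0$ or $n=1$ depending on whether $d_{tt}$ appears). Once this bookkeeping is settled for each type, no deeper ingredient is needed beyond Cauchy--Schwarz and Fubini.
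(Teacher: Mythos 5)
Your proposal is correct and follows essentially the same route as the paper's own proof: integral (Taylor-remainder) representations of the discrete time differences, Cauchy--Schwarz in time, and summation over time levels, with representative cases worked out and the rest deferred to analogous bookkeeping. The paper's derivation of the $\partial_{ttt}$ remainder via integration by parts of the $\partial_{tt}$ integral is the same identity as your higher-order Taylor expansion, so there is no substantive difference.
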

\begin{proof}
We will prove the first three inequalities. The proofs for other inequalities are similar. Using Cauchy-Schwartz inequality, we have
\begin{equation*}
\Delta t \sum_{n=0}^{N-1}||d_t \boldsymbol \varphi^{n+1}-\partial_t \boldsymbol \varphi^{n+1}||^2_{L^2(\Omega)} = \Delta t  \sum_{n=0}^{N-1}  \int_{\Omega} \bigg|\frac{1}{\Delta t} \int_{t^n}^{t^{n+1}} (t-t^n)\partial_{tt} \boldsymbol \varphi (t) dt \bigg|^2 dx 
\end{equation*}
\begin{equation*}
 \le \frac{1}{\Delta t} \int_{\Omega} \sum_{n=0}^{N-1} \bigg( \int_{t^n}^{t^{n+1}} |t-t^n|^2 dt \int_{t^n}^{t^{n+1}} |\partial_{tt}  \boldsymbol \varphi|^2 dt \bigg) dx \le C \Delta t^2 \int_{\Omega} \int_0^T |\partial_{tt} \boldsymbol \varphi|^2 dt dx \le C \Delta t^2  ||\boldsymbol \varphi||^2_{L^2(0,T; L^2(\Omega)).}
\end{equation*}
To prove the next two inequalities, we integrate by parts twice, and use Cauchy-Schwartz inequality:
\begin{equation*}
\Delta t\sum_{n=1}^{N-1}||d_{tt} \boldsymbol \varphi^{n+1}-\partial_t (d_t \boldsymbol \varphi^{n+1})||^2_{L^2(\Omega)} = \Delta t \sum_{n=0}^{N-1} \bigg|\bigg|\frac{1}{\Delta t^2} \bigg( \int_{t^n}^{t^{n+1}} (t-t^n)\partial_{tt} \boldsymbol \varphi dt + \int_{t^{n}}^{t^{n-1}} (t-t^{n-1})\partial_{tt} \boldsymbol \varphi dt \bigg)\bigg|\bigg|^2_{L^2(\Omega)} 
\end{equation*}
\begin{equation*}
 = \frac{1}{\Delta t^3} \sum_{n=0}^{N-1} \bigg|\bigg| \bigg( \frac{\Delta t^2}{2}\int_{t^n}^{t^{n+1}} \partial_{ttt} \boldsymbol \varphi dt -\int_{t^n}^{t^{n+1}} \frac{(t-t^n)^2}{2} \partial_{ttt} \boldsymbol \varphi dt - \int_{t^{n}}^{t^{n-1}} \frac{(t-t^{n-1})^2}{2}\partial_{ttt} \boldsymbol \varphi dt \bigg)\bigg|\bigg|^2_{L^2(\Omega)} 
\end{equation*}
\begin{equation*}
 \le \frac{1}{\Delta t^3} \int_{\Omega} \sum_{n=0}^{N-1} \bigg(\frac{\Delta t^4}{4} \bigg|\int_{t^n}^{t^{n+1}} \partial_{ttt} \boldsymbol \varphi dt\bigg|^2+\bigg|\int_{t^n}^{t^{n+1}} \frac{(t-t^n)^2}{2} \partial_{ttt} \boldsymbol \varphi dt \bigg|^2 +\bigg|\int_{t^{n}}^{t^{n-1}} \frac{(t-t^{n-1})^2}{2}\partial_{ttt} \boldsymbol \varphi dt  \bigg|^2 \bigg)dx
\end{equation*}
\begin{equation*}
 \le \frac{1}{\Delta t^3} \int_{\Omega} \sum_{n=0}^{N-1} \bigg(\frac{\Delta t^5}{4} \int_{t^n}^{t^{n+1}} |\partial_{ttt} \boldsymbol \varphi|^2 dt+\frac{\Delta t^5}{10}\int_{t^n}^{t^{n+1}} |\partial_{ttt} \boldsymbol \varphi|^2 dt +\frac{\Delta t^5}{10}\int_{t^{n}}^{t^{n-1}} |\partial_{ttt} \boldsymbol \varphi|^2 dt\bigg)dx
\end{equation*}
\begin{equation*}
\leq C \Delta t^2 ||\partial_{ttt} \boldsymbol \varphi||^2_{L^2(0,T;L^2(\Omega))}
\end{equation*}
Finally,
\begin{equation*}
\Delta t^3 \sum_{n=1}^{N-1}||d_{tt} \boldsymbol \varphi^{n+1}||^2_{L^2(\Omega)} = \frac{1}{\Delta t} \sum_{n=0}^{N-1} \bigg|\bigg| \bigg( \int_{t^n}^{t^{n+1}}\partial_{t} \boldsymbol \varphi dt + \int_{t^{n}}^{t^{n-1}} \partial_{t} \boldsymbol \varphi dt \bigg)\bigg|\bigg|^2_{L^2(\Omega)} 
\end{equation*}
\begin{equation*}
=\frac{1}{\Delta t} \sum_{n=0}^{N-1} \bigg|\bigg| \bigg( \int_{t^n}^{t^{n+1}} (t-t^{n+1}) \partial_{tt} \boldsymbol \varphi dt + \int_{t^{n}}^{t^{n-1}} (t-t^{n-1})\partial_{tt} \boldsymbol \varphi dt \bigg)\bigg|\bigg|^2_{L^2(\Omega)} 
\end{equation*}
\begin{equation*}
 \le \frac{1}{\Delta t} \int_{\Omega} \sum_{n=0}^{N-1} \bigg( \int_{t^n}^{t^{n+1}} |t-t^{n+1}|^2 dt \int_{t^n}^{t^{n+1}}  |\partial_{tt}  \boldsymbol \varphi|^2 dt +\int_{t^n}^{t^{n-1}} |t-t^{n-1}|^2 dt \int_{t^n}^{t^{n-1}}  |\partial_{tt}  \boldsymbol \varphi|^2 dt\bigg) dx
\end{equation*}
\begin{equation*}
\le C \Delta t^2 \int_{\Omega} \int_0^T |\partial_{tt} \boldsymbol \varphi|^2 dt dx \leq C \Delta t^2 ||\partial_{tt} \boldsymbol \varphi||^2_{L^2(0,T;L^2(\Omega))}. 
\end{equation*}
\end{proof}


\begin{lemma}\label{consistency_error}
The following estimate holds:
\begin{equation*}
 \Delta t \sum_{n=0}^{N-1} \big( \mathcal{R}^{n+1}_f(\delta_f^{n+1})+\mathcal{R}^{n+1}_s(\delta_f^{n+1})+\mathcal{R}^{n+1}_v(\delta_f^{n+1})+\mathcal{R}^{n+1}_p(\delta_f^{n+1}) \big) \le  C \Delta t^2 \bigg( ||\partial_{tt} \boldsymbol v||^2_{L^2(0,T;L^2(\Omega^f))}
\end{equation*}
\begin{equation*}
+||\partial_{tt} \boldsymbol \xi ||^2_{L^2(0,T;L^2(0,L))}+||\partial_{t} p_p||^2_{L^2(0,T;H^1(\Omega^p))}+||\partial_{tt} p_p||^2_{L^2(0,T;L^2(\Omega^p))}+||\partial_{tt} \boldsymbol U||^2_{L^2(0,T;H^1(\Omega^p))}
\end{equation*}
\begin{equation*}
+||\partial_{ttt} \boldsymbol U||^2_{L^2(0,T;L^2(\Omega^p))}+||\partial_{tt} \boldsymbol V||^2_{L^2(0,T;L^2(\Omega^p))}+||\partial_{ttt} \boldsymbol \eta||^2_{L^2(0,T;L^2(0,L))}+||\partial_{ttt} \boldsymbol V||^2_{L^2(0,T;L^2(\Omega^p))}
\end{equation*}
\begin{equation*}
+||\partial_{ttt} \boldsymbol \xi||^2_{L^2(0,T;L^2(0,L))}+||\partial_{tt} \boldsymbol \eta||^2_{L^2(0,T;H^1(0,L))}+||\partial_{tt} \boldsymbol U||_{l^\infty(0,T;L^2(\Omega^p))} +||\partial_{tt} \boldsymbol \eta||_{l^\infty(0,T;L^2(0,L))}
\end{equation*}
\begin{equation*}
+ ||\partial_{tt} \boldsymbol V||_{l^\infty(0,T;L^2(\Omega^p))}+||\partial_{tt} \boldsymbol \xi||_{l^\infty(0,T;L^2(0,L))}+||\partial_t \boldsymbol V||_{l^\infty(0,T;L^2(\Omega^p))}+ ||\partial_t \boldsymbol \xi||_{l^\infty(0,T;L^2(0,L))}
\end{equation*}
\begin{equation*}
+||\partial_t \boldsymbol U||_{l^\infty(0,T;H^1(\Omega^p))} +||\partial_t \boldsymbol \eta||_{l^\infty(0,T;H^1(0,L))}\bigg) + \mathcal{A}(\delta_f, \delta_p, \delta_v, \delta_s), 
\end{equation*}
where 
\begin{equation*}
\mathcal{A}(\delta_f, \delta_p, \delta_v, \delta_s) = \frac{\gamma \Delta t}{4} \sum_{n=0}^{N-1} ||D(\delta_f^{n+1})||^2_{L^2(\Omega^f)}  +\frac{\Delta t}{4}\sum_{n=0}^{N-1}||\sqrt{\kappa} \nabla \delta_p^{n+1}||^2_{L^2(\Omega^p)}  + \epsilon \bigg(  ||\delta_v^N||^2_{L^2(\Omega^p)} +   ||\delta_v|_{\Gamma}^N||^2_{L^2(0,L)}
\end{equation*}
\begin{equation*}
+ ||D(\delta_s^N)||^2_{L^2(\Omega^p)}  +||\nabla \cdot \delta_s^N||^2_{L^2(\Omega^p)}+  \mathcal{M}(\delta_s|_{\Gamma}^N) \bigg)+ C \Delta t \sum_{n=1}^{N-1} \bigg(  ||\delta_v^n||^2_{L^2(\Omega^p)}+||\delta_v|_{\Gamma}^n||^2_{L^2(0,L)}
\end{equation*}
\begin{equation*}
+ ||\delta_s^n||^2_{L^2(\Omega^p)}+||D(\delta_s^n)||^2_{L^2(\Omega^p)}
+||\nabla \cdot \delta_s^n||^2_{L^2(\Omega^p)}   
+ ||\delta_s|_{\Gamma}^n||^2_{L^2(\Gamma)}+ \mathcal{M}(\delta_s|_{\Gamma}^n) \bigg).
\end{equation*}
\end{lemma}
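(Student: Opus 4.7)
The plan is to bound each of the four consistency-error sums separately by combining (i) Cauchy--Schwarz \eqref{CS}, trace \eqref{trace}, Korn \eqref{korn} and Poincar\'e--Friedrichs \eqref{PF} inequalities to split products into ``data'' factors and ``truncation error'' factors; (ii) Young's inequality \eqref{youngs} to absorb the truncation-error factors into $\mathcal{A}(\delta_f,\delta_p,\delta_v,\delta_s)$; and (iii) the Taylor-expansion bounds of Lemma \ref{Lemma_consistency} applied to the data factors to produce the $\Delta t^2$ norms of higher time derivatives of $\boldsymbol v,\boldsymbol U,\boldsymbol V,\boldsymbol\eta,\boldsymbol\xi,p_p$ listed on the right-hand side.

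First I would handle $\Delta t\sum_n\mathcal{R}_f^{n+1}(\delta_f^{n+1})$: apply Cauchy--Schwarz and then Young's with weight tuned to absorb into $\tfrac{\gamma\Delta t}{4}\sum\|D(\delta_f^{n+1})\|^2$ via Korn and Poincar\'e--Friedrichs; the remaining factors $\|d_t\boldsymbol v^{n+1}-\partial_t\boldsymbol v^{n+1}\|_{L^2(\Omega^f)}$, its tangential trace on $\Gamma$ (bounded using \eqref{trace} plus Korn and an $H^1$-Taylor estimate), and $\|p_p^{n+1}-p_p^n\|$ (bounded by $\Delta t^2\|\partial_t p_p\|^2_{L^2(0,T;H^1(\Omega^p))}$) all come from Lemma \ref{Lemma_consistency}. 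The $\mathcal{R}_p^{n+1}(\delta_p^{n+1})$ term is similar, with the $\tfrac{\Delta t}{4}\sum\|\sqrt{\kappa}\nabla\delta_p^{n+1}\|^2$ budget available for absorption and the factors $\|d_t p_p-\partial_t p_p\|$ and $\|d_t\boldsymbol U-\partial_t\boldsymbol U\|_{H^1}$ handled directly by Lemma~\ref{Lemma_consistency}.

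The main obstacle is $\Delta t\sum_n\mathcal{R}_s^{n+1}(d_t\delta_u^{n+1})$ and $\Delta t\sum_n\mathcal{R}_v^{n+1}(d_t\delta_v^{n+1})$: the test functions are discrete time derivatives of $\delta_u,\delta_v$, which are \emph{not} controlled by the stability bound \eqref{theorem:stability}. To avoid this, I would apply the discrete integration-by-parts identity \eqref{int_by_parts} on each such sum, transferring the $d_t$ onto the data factor. For example,
\begin{equation*}
\Delta t\sum_{n=0}^{N-1}\rho_p\!\int_{\Omega^p}(d_t\boldsymbol V^{n+1}-\partial_t\boldsymbol V^{n+1})\cdot d_t\delta_u^{n+1}\,d\boldsymbol x
= \rho_p\!\int_{\Omega^p}(d_t\boldsymbol V^N-\partial_t\boldsymbol V^N)\cdot\delta_u^N\,d\boldsymbol x
\end{equation*}
\begin{equation*}
\qquad - \rho_p\Delta t\sum_{n=1}^{N-1}\!\int_{\Omega^p}d_t(d_t\boldsymbol V^{n+1}-\partial_t\boldsymbol V^{n+1})\cdot\delta_u^n\,d\boldsymbol x.
\end{equation*}
Young's inequality then sends the boundary term into $\epsilon\|D(\delta_u^N)\|^2$ (with $C_\epsilon\Delta t^2\|\partial_{tt}\boldsymbol V\|_{l^\infty(L^2)}$ the leftover data piece) and the telescoping sum into $C\Delta t\sum\|D(\delta_u^n)\|^2$ (with the data piece $C\Delta t^2\|\partial_{ttt}\boldsymbol V\|^2_{L^2(L^2)}$ by Lemma~\ref{Lemma_consistency}). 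The term $a_e(\tfrac{\boldsymbol U^n-\boldsymbol U^{n+1}}{2},d_t\delta_u^{n+1})$ is treated the same way, producing $\Delta t^2\|\partial_{tt}\boldsymbol U\|^2_{L^2(H^1)}$ and $\Delta t^2\|\partial_{ttt}\boldsymbol U\|^2_{L^2(L^2)}$ contributions; the membrane-energy term $a_m(\tfrac{\boldsymbol U|_\Gamma^n-\boldsymbol U|_\Gamma^{n+1}}{2},\cdot)$ yields $\Delta t^2\|\partial_{tt}\boldsymbol\eta\|^2_{L^2(H^1)}$ and $\|\partial_{ttt}\boldsymbol\eta\|^2_{L^2(L^2)}$; and the analogous interface term of $\mathcal{R}_v$ gives $\|\partial_{tt}\boldsymbol\xi\|_{l^\infty}$ and $\|\partial_{ttt}\boldsymbol\xi\|_{L^2(L^2)}$.

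Finally I would collect all boundary contributions (at $n=N$) into the $\epsilon$-terms of $\mathcal{A}$, all interior sums into the $C\Delta t\sum\cdots$ Gronwall-style terms of $\mathcal{A}$, and all data factors into the listed $\Delta t^2$-norms. The remaining viscous and Darcy absorption terms $\tfrac{\gamma\Delta t}{4}\sum\|D(\delta_f^{n+1})\|^2+\tfrac{\Delta t}{4}\sum\|\sqrt{\kappa}\nabla\delta_p^{n+1}\|^2$ are obtained by choosing the Young constants once and for all so that the coefficients sum to exactly the budgets declared in $\mathcal{A}$. The only subtle bookkeeping is ensuring every occurrence of $d_t\delta_u$ or $d_t\delta_v$ in $\mathcal{R}_s,\mathcal{R}_v,\mathcal{R}_p$ is dispatched by the integration-by-parts trick above rather than bounded directly; once this is done, the estimate follows term-by-term from Lemma~\ref{Lemma_consistency}.
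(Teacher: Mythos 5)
Your proposal follows essentially the same route as the paper's proof: direct Cauchy--Schwarz/Young bounds (combined with Korn, Poincar\'e--Friedrichs and trace inequalities) for the $\mathcal{R}_f$ and $\mathcal{R}_p$ sums, absorbed into the $\frac{\gamma\Delta t}{4}$ and $\frac{\Delta t}{4}$ budgets of $\mathcal{A}$; the discrete integration-by-parts identity \eqref{int_by_parts} to dispatch every occurrence of $d_t\delta_u$ and $d_t\delta_v$ in the $\mathcal{R}_s$ and $\mathcal{R}_v$ sums (which, as you correctly note, is the essential step since these quantities are not controlled by the stability estimate); and Lemma~\ref{Lemma_consistency} applied to the resulting data factors. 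The only discrepancy is cosmetic bookkeeping in the sketch --- for instance, the term $C\Delta t^2\|\partial_{ttt}\boldsymbol U\|^2_{L^2(0,T;L^2(\Omega^p))}$ actually arises from the $d_t\boldsymbol U^{n+1}-\partial_t\boldsymbol U^{n+1}$ term of $\mathcal{R}_v$ after integration by parts, while the $a_e$ term of $\mathcal{R}_s$ contributes $\Delta t^2\|\partial_{tt}\boldsymbol U\|^2_{L^2(0,T;H^1(\Omega^p))}$ (interior sum) and $\Delta t^2\|\partial_t\boldsymbol U\|_{l^\infty(0,T;H^1(\Omega^p))}$ (boundary piece) --- which does not affect the validity of the argument.
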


\begin{proof}
Using the formula for integration by parts in time~\eqref{int_by_parts}, the consistency errors are bounded as follows:
\begin{itemize}
\item 
$ \Delta t \displaystyle\sum_{n=0}^{N-1}\mathcal{R}^{n+1}_f(\delta_f^{n+1}) \le C \Delta t \sum_{n=0}^{N-1}||d_t \boldsymbol v^{n+1}-\partial_t \boldsymbol v^{n+1}||^2_{L^2(\Omega^f)} + C \Delta t \sum_{n=0}^{N-1} ||\nabla(p_p^{n+1}-p_p^n)||^2_{L^2(\Omega^p)}
$
\begin{equation*}
 +C \Delta t \sum_{n=0}^{N-1}||d_t \boldsymbol v^{n+1}\cdot \boldsymbol \tau-\partial_t \boldsymbol v^{n+1} \cdot \boldsymbol \tau ||^2_{L^2(\Gamma)}+ \frac{\gamma \Delta t}{4} \sum_{n=0}^{N-1} ||D(\delta_f^{n+1})||^2_{L^2(\Omega^f)},
\end{equation*}
\item
$ \Delta t \displaystyle\sum_{n=0}^{N-1}\mathcal{R}^{n+1}_p(\delta_p^{n+1}) \le C \Delta t \sum_{n=0}^{N-1}||d_t p_p^{n+1}-\partial_t p_p^{n+1} ||^2_{L^2(\Omega^p)}  +\frac{\Delta t}{4}\sum_{n=0}^{N-1}||\sqrt{\kappa} \nabla \delta_p^{n+1}||^2_{L^2(\Omega^p)}
$
\begin{equation*}
+C \Delta t \sum_{n=0}^{N-1}||\nabla ( d_t \boldsymbol U^{n+1}-\partial_t \boldsymbol U^{n+1})||^2_{L^2(\Omega^p)},
\end{equation*}
\item
$\Delta t \displaystyle\sum_{n=0}^{N-1}\mathcal{R}^{n+1}_v(d_t \delta_v^{n+1}) = -\rho_{p} \int_{\Omega^p} (d_t \boldsymbol U^N - \partial_t \boldsymbol U^N ) \cdot \delta_v^N d \boldsymbol x
-\frac{\rho_{p}}{2} \Delta t \int_{\Omega^p} d_t \boldsymbol V^N \cdot \delta_v^N d \boldsymbol x$
\begin{equation*}
 + \rho_{p} \Delta t \sum_{n=1}^{N-1} \int_{\Omega^p} (d_{tt}\boldsymbol U^{n+1} - \partial_t (d_t \boldsymbol U^{n+1})) \cdot \delta_v^n d \boldsymbol x    + \frac{\rho_{p}}{2} \Delta t^2 \sum_{n=1}^{N-1} \int_{\Omega^p} d_{tt}\boldsymbol V^{n+1} \cdot \delta_v^n d \boldsymbol x
 \end{equation*}
 \begin{equation*}
 -\rho_{m}r_m \int_{\Gamma} (d_t \boldsymbol U^N - \partial_t \boldsymbol U^N ) \cdot \delta_v^N d x + \rho_{m}r_m \Delta t \sum_{n=1}^{N-1} \int_{\Gamma} (d_{tt} \boldsymbol U^{n+1}- \partial_t (d_t \boldsymbol U^{n+1}) ) \cdot \delta_v^n d x 
  \end{equation*}
\begin{equation*}
 -\frac{\rho_{m}r_m}{2} \Delta t  \int_{\Gamma} d_t \boldsymbol V^N \cdot \delta_v^N d \boldsymbol x   + \frac{\rho_{m}r_m}{2} \Delta t^2 \sum_{n=1}^{N-1} \int_{\Gamma} d_{tt} \boldsymbol V^{n+1} \cdot \delta_v^n d x 
\leq C_{\epsilon} ||d_t \boldsymbol U^{N}-\partial_t \boldsymbol U^{N}||^2_{L^2(\Omega^p)} 
\end{equation*}
\begin{equation*}
+C \Delta t \sum_{n=1}^{N-1}||d_{tt} \boldsymbol U^{n+1}-\partial_t (d_t \boldsymbol U^{n+1} )||^2_{L^2(\Omega^p)}+C_{\epsilon} \Delta t^2 ||d_t \boldsymbol V^{N}||^2_{L^2(\Omega^p)}  +C \Delta t^3 \sum_{n=1}^{N-1}||d_{tt} \boldsymbol V^{n+1}||^2_{L^2(\Omega^p)}
\end{equation*}
\begin{equation*}
+C_{\epsilon} ||d_t \boldsymbol U^{N}-\partial_t \boldsymbol U^{N}||^2_{L^2(\Gamma)}
 +C \Delta t \sum_{n=1}^{N-1}||d_{tt}\boldsymbol U^{n+1}-\partial_t (d_t \boldsymbol U^{n+1})||^2_{L^2(\Gamma)} +C_{\epsilon} \Delta t^2||d_t \boldsymbol V^{N}||^2_{L^2(\Gamma)}
\end{equation*}
\begin{equation*}
 +C \Delta t^3 \sum_{n=1}^{N-1}||d_{tt} \boldsymbol V^{n+1}||^2_{L^2(\Gamma)}  + \epsilon  ||\delta_v^N||^2_{L^2(\Omega^p)} + \epsilon  ||\delta_v^N||^2_{L^2(\Gamma)}+ C \Delta t \sum_{n=1}^{N-1} ||\delta_v^n||^2_{L^2(\Omega^p)}.
+  C\Delta t \sum_{n=1}^{N-1} ||\delta_v^n||^2_{L^2(\Gamma)}
\end{equation*}
\item
$\Delta t \displaystyle\sum_{n=0}^{N-1}\mathcal{R}^{n+1}_s(d_t \delta_s^{n+1}) = \rho_{p} \int_{\Omega^p} (d_t \boldsymbol V^N - \partial_t \boldsymbol V^N) \cdot \delta_s^N d \boldsymbol x
- \frac{1}{2} \Delta t a_e(d_t \boldsymbol U^N, \delta_s^N)$
\begin{equation*}
- \rho_{p} \Delta t \sum_{n=1}^{N-1} \int_{\Omega^p} (d_{tt} \boldsymbol V^{n+1} - \partial_t (d_t \boldsymbol V^{n+1})) \cdot \delta_s^n d \boldsymbol x + \frac{\Delta t^2}{2} \sum_{n=1}^{N-1} a_e(d_{tt} \boldsymbol U^{n+1}, \delta_s^n)
\end{equation*}
\begin{equation*}
 +\rho_{m}r_m \int_{\Gamma} (d_t \boldsymbol V^N - \partial_t \boldsymbol V^N) \cdot \delta_s^N d x- \rho_{m}r_m \Delta t \sum_{n=1}^{N-1} \int_{\Gamma} (d_{tt} \boldsymbol V^{n+1} - \partial_t (d_t \boldsymbol V^{n+1})) \cdot \delta_s^n d x
\end{equation*}
\begin{equation*}
 - \frac{\Delta t}{2} a_m(d_t \boldsymbol U|_{\Gamma}^N, \delta_s|_{\Gamma}^N)
 + \frac{\Delta t^2}{2}\sum_{n=1}^{N-1} a_m(d_{tt} \boldsymbol U|_{\Gamma}^{n+1}, \delta_s|_{\Gamma}^n) \leq 
 C_{\epsilon} ||d_t \boldsymbol V^{N}-\partial_t \boldsymbol V^{N}||^2_{L^2(\Omega^p)} 
\end{equation*}
\begin{equation*}
+C \Delta t \sum_{n=1}^{N-1}||d_{tt} \boldsymbol V^{n+1}-\partial_t (d_t \boldsymbol V^{n+1})||^2_{L^2(\Omega^p)}+C_{\epsilon} \Delta t^2||D(d_t \boldsymbol U^{N})||^2_{L^2(\Omega^p)}  +C_{\epsilon} \Delta t^2||\nabla \cdot d_t \boldsymbol U^{N}||^2_{L^2(\Omega^p)}
\end{equation*}
\begin{equation*}
+C \Delta t^3 \sum_{n=1}^{N-1} ||D(d_{tt} \boldsymbol U^{n+1})||^2_{L^2(\Omega^p)}    +C \Delta t^3 \sum_{n=1}^{N-1}||\nabla \cdot d_{tt}\boldsymbol U^{n+1}||^2_{L^2(\Omega^p)} + C_{\epsilon} ||d_t \boldsymbol V^{N}-\partial_t \boldsymbol V^{N}||^2_{L^2(\Gamma)}
\end{equation*}
\begin{equation*}
  +C \Delta t \sum_{n=1}^{N-1}||d_{tt} \boldsymbol V^{n+1} - \partial_t(d_t \boldsymbol V^{n+1})||^2_{L^2(\Gamma)}
+C_{\epsilon} \Delta t^2 \mathcal{M}(d_t \boldsymbol U|_{\Gamma}^N) +C \Delta t^3 \sum_{n=1}^{N-1} \mathcal{M}(d_{tt} \boldsymbol U|_{\Gamma}^{n+1}) 
\end{equation*}
\begin{equation*}
+ C \Delta t \sum_{n=1}^{N-1} ||\delta_s^n||^2_{L^2(\Omega^p)}+\epsilon ||D(\delta_s^N)||^2_{L^2(\Omega^p)}  +\epsilon||\nabla \cdot \delta_s^N||^2_{L^2(\Omega^p)}  + C \Delta t  \sum_{n=1}^{N-1}||D(\delta_s^n)||^2_{L^2(\Omega^p)}
\end{equation*}
\begin{equation*}
  +C \Delta t \sum_{n=1}^{N-1}||\nabla \cdot \delta_s^n||^2_{L^2(\Omega^p)} + C \Delta t \sum_{n=1}^{N-1} ||\delta_s^n||^2_{L^2(\Gamma)}
+ \epsilon \mathcal{M}(\delta_s|_{\Gamma}^N)+ C\Delta t \sum_{n=1}^{N-1}\mathcal{M}(\delta_s|_{\Gamma}^n).
\end{equation*}
\end{itemize}
The final consistency error estimate follows by applying Lemma~\ref{Lemma_consistency}.
\end{proof}


\begin{lemma} \label{lemma_interpolation}
(Interpolation errors) The following inequalities hold:
\begin{equation*}
\Delta t \sum_{n=0}^{N-1} ||d_t \theta_p^{n+1}||^2_{L^2(\Omega^p)} \le \Delta t \sum_{n=0}^{N-1}||\partial_t \theta_p^n||^2_{L^2(0,T;L^2(\Omega^p))} \le h^{2k+2} ||\partial_t p_p||^2_{l^2(0,T;H^{k+1}(\Omega^p))},
\end{equation*}
\begin{equation*}
\Delta t \sum_{n=0}^{N-1} ||\nabla d_t \theta_f^{n+1}||^2_{L^2(\Omega^f)} \le \Delta t \sum_{n=0}^{N-1}  ||\partial_t \theta_f||^2_{L^2(0,T;H^1(\Omega^f))} \le h^{2k} ||\partial_t \boldsymbol v||^2_{l^2(0,T;H^{k+1}(\Omega^f))},
\end{equation*}
\begin{equation*}
 \Delta t \sum_{n=0}^{N-1} ||d_{tt}\theta_s^{n+1}||^2_{L^2(\Omega^p)} \le \Delta t \sum_{n=0}^{N-1}  ||\partial_{tt} \theta_s||^2_{L^2(0,T;L^2(\Omega^p))}\le h^{2k+2} ||\partial_{tt} \boldsymbol U||^2_{l^2(0,T;H^{k+1}(\Omega^p))},
\end{equation*}
\begin{equation*}
 \Delta t \sum_{n=0}^{N-1} \bigg|\bigg|\frac{\theta_v^{n+1}-\theta_v^{n-1}}{2 \Delta t}\bigg| \bigg|^2_{L^2(\Gamma)} \le \Delta t \sum_{n=0}^{N-1}  ||\partial_t \theta_v||^2_{L^2(0,T;L^2(\Gamma))}\le h^{2k+2} ||\partial_{tt} \boldsymbol \xi||^2_{l^2(0,T;H^{k+1}(0,L)},
\end{equation*}
\begin{equation*}
 \Delta t \sum_{n=0}^{N-1} \mathcal{M}(\frac{\theta_s|_{\Gamma}^{n+1}-\theta_s|_{\Gamma}^{n-1}}{2 \Delta t})\le \Delta t \sum_{n=0}^{N-1}  ||\partial_t \theta_s|_{\Gamma}||^2_{L^2(0,T;H^1(0,L))}\le h^{2k} ||\partial_{tt} \boldsymbol \eta||^2_{l^2(0,T;H^{k+1}(0,L))},
\end{equation*}
\begin{equation*}
 \Delta t \sum_{n=0}^{N-1} \bigg(||D(\theta_f^{n+1})||^2_{L^2(\Omega^f)} +||\nabla \theta_p^{n+1}||^2_{L^2(\Omega^p)}+||\nabla \theta_p^{n}||^2_{L^2(\Omega^p)}\bigg) \leq \Delta t \sum_{n=0}^{N} h^{2k} \big(||\boldsymbol v^n||^2_{H^{k+1}(\Omega^f)}+||p_p^n||^2_{H^{k+1}(\Omega^p)} \big)
\end{equation*}
\begin{equation*}
  \le h^{2k} \big(||\boldsymbol v||^2_{l^2(0,T;H^{k+1}(\Omega^f))}+||p_p||^2_{l^2(0,T;H^{k+1}(\Omega^p))} \big).
\end{equation*}
\end{lemma}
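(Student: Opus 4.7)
The plan is to reduce every inequality to two ingredients: (i) a one-line integral representation of discrete time differences combined with Cauchy–Schwarz, which transfers a discrete-in-time norm of $\theta$ to a continuous-in-time $L^2$ norm of $\partial_t \theta$ (resp.\ $\partial_{tt}\theta$); and (ii) the fact that the spatial projection operators $S_h$, $P_h$, $\Pi_h^{f/p}$ act pointwise in time and therefore commute with $\partial_t$, so the continuous-time errors $\partial_t\theta$ are themselves projection errors to which Lemma~\ref{approximation_properties} applies.

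For ingredient (i), I will use the identity
\begin{equation*}
d_t \theta^{n+1} \;=\; \frac{1}{\Delta t}\int_{t^n}^{t^{n+1}} \partial_t \theta(s)\,ds,
\end{equation*}
so that, by Cauchy–Schwarz, $\|d_t\theta^{n+1}\|^2 \le \frac{1}{\Delta t}\int_{t^n}^{t^{n+1}}\|\partial_t\theta(s)\|^2\,ds$. Multiplying by $\Delta t$ and summing in $n$ telescopes the integrals into $\|\partial_t\theta\|^2_{L^2(0,T;\cdot)}$, giving the first bound in estimates 1 and 2. The same scheme applies to the $\nabla$-version by replacing the integrand with $\nabla\partial_t\theta$. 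For the second-order difference $d_{tt}\theta_s^{n+1}$, I will iterate: write $d_t\theta_s^{n+1}-d_t\theta_s^n = \frac{1}{\Delta t}\int_{t^n}^{t^{n+1}}[\partial_t\theta_s(s)-\partial_t\theta_s(s-\Delta t)]\,ds = \frac{1}{\Delta t}\int_{t^n}^{t^{n+1}}\int_{s-\Delta t}^{s}\partial_{tt}\theta_s(r)\,dr\,ds$ and apply Cauchy–Schwarz twice to obtain $\|d_{tt}\theta_s^{n+1}\|^2 \le \frac{C}{\Delta t}\int_{t^{n-1}}^{t^{n+1}}\|\partial_{tt}\theta_s(r)\|^2dr$; summing telescopes to $\|\partial_{tt}\theta_s\|^2_{L^2(0,T;L^2)}$. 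For the centered difference $\frac{\theta_v^{n+1}-\theta_v^{n-1}}{2\Delta t}$, exactly the same device (now with an interval of length $2\Delta t$) bounds it by a local $L^2$ integral of $\partial_t\theta_v$. The $\mathcal{M}$-seminorm version is obtained by applying the identity component-wise to $\partial_{\hat x}(\theta_s|_\Gamma)$ and $\theta_s|_\Gamma$ themselves, which is legitimate because $\mathcal{M}$ is a quadratic form in first-order spatial derivatives on $\Gamma$.

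For ingredient (ii), since $\Pi_h^p$ and $P_h$ are defined by spatial relations that hold for almost every $t$, we have $\partial_t(p_p - \Pi_h^p p_p) = \partial_t p_p - \Pi_h^p(\partial_t p_p)$ and similarly for $P_h$; analogous commutativity holds for the Stokes projection $S_h$ since it is defined through $a_f(\cdot,\cdot)$ independently of $t$. Applying Lemma~\ref{approximation_properties} to $\partial_t p_p$, $\partial_t \boldsymbol v$, $\partial_{tt}\boldsymbol U$, $\partial_{tt}\boldsymbol\xi$, and $\partial_{tt}\boldsymbol\eta$ (in the appropriate norms: $L^2$ giving $h^{k+1}$, $H^1$ giving $h^k$) yields the second inequality in each chain, after summing over $n$. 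The regularity needed to make sense of $\partial_t$ and $\partial_{tt}$ of these quantities is exactly what Assumption~1 (the list in \eqref{true_sol}) provides.

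The final displayed inequality, which involves no time derivatives of $\theta$, is the most elementary: for each fixed $n$, Lemma~\ref{approximation_properties} yields $\|D(\theta_f^{n+1})\|^2 \le C h^{2k}\|\boldsymbol v^{n+1}\|^2_{H^{k+1}(\Omega^f)}$ and $\|\nabla\theta_p^{n+1}\|^2 \le Ch^{2k}\|p_p^{n+1}\|^2_{H^{k+1}(\Omega^p)}$; multiplication by $\Delta t$ and summation identify the right-hand side with the $l^2(0,T;H^{k+1})$ time-discrete norm, absorbing the extra term $\|\nabla\theta_p^n\|^2$ by the obvious index shift. There is no real obstacle here; the only bookkeeping point is keeping the commutation of $\partial_t$ with the projection operators straight in the presence of the mixed norms that appear in the lemma.
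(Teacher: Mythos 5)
Your proposal is correct and takes essentially the same route as the paper: the paper's own proof is a one-line remark that the inequalities follow from ``manipulations similar to the ones in the consistency-error lemma'' (i.e.\ your integral representation of discrete time differences plus Cauchy--Schwarz) combined with the approximation properties \eqref{ap1}--\eqref{ap2}. Your write-up simply fills in those manipulations and makes explicit the commutation of the projections $S_h$, $P_h$, $\Pi_h^{f/p}$ with $\partial_t$, which the paper leaves implicit.
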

\begin{proof}
 Inequalities in Lemma~\ref{lemma_interpolation} can be easily shown using manipulations similar to the ones in Lemma~\ref{consistency_error}, and approximation properties~\eqref{ap1}-\eqref{ap2}.
\end{proof}

\if 1=0
Taking into account the estimates above, and summing over $0 \le n \le N-1$, we get
\begin{equation*}
 \mathcal{E}_{\delta}^{N}  +\frac{7 \gamma \Delta t}{12}  \sum_{n=0}^{N-1} ||D(\delta_f^{n+1})||^2_{L^2(\Omega^f)}  + \frac{3 \Delta t}{5} \sum_{n=0}^{N-1}||\sqrt{\kappa} \nabla \delta_p^{n+1}||^2_{L^2(\Omega^p)}\le \sum_{n=0}^{N-1} \Delta t \bigg(\mathcal{R}^{n+1}_f(\delta_f^{n+1})
 \end{equation*}
\begin{equation*}
+ \mathcal{R}_s^{n+1}(d_t \delta_s^{n+1})+\mathcal{R}_v^{n+1}(d_t \delta_v^{n+1})+ \mathcal{R}_p^{n+1}(\delta_p^{n+1})\bigg)+ C \Delta t \sum_{n=0}^{N-1}  ||p_f^{n+1} - \lambda^{n+1}_{h}||^2_{L^2(\Omega^f)}
\end{equation*}
\begin{equation*}
 +C \Delta t \sum_{n=0}^{N-1} \bigg(||D(\theta_f^{n+1})||^2_{L^2(\Omega^f)} +||\nabla \theta_p^{n+1}||^2_{L^2(\Omega^p)}+||\nabla \theta_p^n||^2_{L^2(\Omega^p)} +||d_t \theta_f^{n+1}||^2_{L^2(\Omega^f)}+||d_t \theta_f|_{\Gamma}^{n+1}\cdot \boldsymbol \tau||^2_{L^2(0,L)}
\end{equation*}
 \begin{equation*}
+ ||\nabla d_t \theta_f^{n+1}||^2_{L^2(\Omega^f)} +||\nabla d_t\theta_s^{n+1}||^2_{L^2(\Omega^p)}\bigg)+\rho_{p} \Delta t \sum_{n=0}^{N-1}\int_{\Omega^p} d_t \theta_s^{n+1} \cdot d_t \delta_v^{n+1} d \boldsymbol x+\Delta t  \sum_{n=0}^{N-1}  b_{ep}(\theta^{n+1}_{p}, d_t \delta_s^{n+1})
\end{equation*}
\begin{equation*}
-\rho_{m} r_m \Delta t  \sum_{n=0}^{N-1} \int_0^L \theta_v|_{\Gamma}^{n+1/2} \cdot d_t \delta_v|^{n+1}_{\Gamma} dx 
 +\rho_{m}r_m \Delta t \sum_{n=0}^{N-1} \int_0^L d_t \theta_s|_{\Gamma}^{n+1} \cdot d_t \delta_v|^{n+1}_{\Gamma} d x
\end{equation*}
\begin{equation}
-\rho_{m} r_m\Delta t  \sum_{n=0}^{N-1}\int_0^L d_t \theta_v|_{\Gamma}^{n+1} \cdot d_t \delta_s|^{n+1}_{\Gamma} dx \Delta t - \sum_{n=0}^{N-1} a_m (\theta_s|_{\Gamma}^{n+1/2}, d_t \delta_s|_{\Gamma}^{n+1})+ \Delta t \sum_{n=0}^{N-1} c_{ep}(\theta^{n+1}_{p}, d_t \delta_s^{n+1}). 
\end{equation}
\fi



\bibliographystyle{plain}
\bibliography{shellBiot}

\end{document}